\newcommand{\R}{\mathbb{R}} 
\newcommand{\Q}{\mathbb{Q}} 
\newcommand{\uhp}{\mathbb{H}} 
\newcommand{\B}{\mathbb{B}} 
\newcommand{\C}{\mathbb{C}} 
\newcommand{\Z}{\mathbb{Z}} 
\newcommand{\F}{\mathbb{F}} 
\newcommand{\N}{\textup{N}}
\newcommand{\Zhat}{\hat{\mathbb{Z}}}
\newcommand{\adeles}{\mathbb{A}}
\newcommand*{\domain}{\mathbb{D}}
\newcommand{\OD}{\calO_{D}}
\newcommand{\ODt}{\calO_{D}^{\times}}
\newcommand{\ODhatt}{\hat{\calO}_{D}^{\times}}
\newcommand{\sage}{\texttt{sage}}
\newcommand{\bs}{\backslash}
\newcommand{\calC}{\mathcal{C}}
\newcommand{\calG}{\mathcal{G}}
\newcommand{\calM}{\mathcal{M}}
\newcommand{\calO}{\mathcal{O}}
\newcommand{\calQ}{\mathcal{Q}}
\newcommand{\calX}{\mathcal{X}}
\newcommand{\calY}{\mathcal{Y}}
\newcommand{\calZ}{\mathcal{Z}}
\newcommand{\different}[1]{\mathfrak{d}_#1}
\newcommand{\fraka}{\mathfrak{a}}
\newcommand{\frakb}{\mathfrak{b}}
\newcommand{\frakc}{\mathfrak{c}}
\newcommand{\frakf}{\mathfrak{f}}
\newcommand{\frakn}{\mathfrak{n}}
\newcommand{\frakp}{\mathfrak{p}}
\newcommand{\frakP}{\mathfrak{P}}
\newcommand{\frakQ}{\mathfrak{Q}}
\newcommand{\legendre}[2]{\left( \frac{\mathstrut #1}{#2} \right)}
\newcommand{\abcd}{\begin{pmatrix}a & b \\ c & d\end{pmatrix}}
\newcommand{\abs}[1]{\left\vert#1\right\vert}
\DeclareMathOperator{\End}{End}
\DeclareMathOperator{\Lie}{Lie}
\DeclareMathOperator{\Aut}{Aut}
\DeclareMathOperator{\SL}{SL}
\DeclareMathOperator{\GL}{GL}
\DeclareMathOperator{\Mp}{Mp}
\DeclareMathOperator{\Cl}{Cl}
\DeclareMathOperator{\Clk}{Cl_{k}}
\DeclareMathOperator{\tr}{tr}
\DeclareMathOperator{\Gal}{Gal}
\DeclareMathOperator{\divisor}{div}
\DeclareMathOperator{\Og}{O}
\DeclareMathOperator{\SO}{SO}
\DeclareMathOperator{\GSpin}{GSpin}
\DeclareMathOperator{\ord}{ord}
\DeclareMathOperator{\Sch}{Sch}
\DeclareMathOperator{\pr}{pr}
\DeclareMathOperator{\cRes}{cRes}
\newtheorem{theorem}{Theorem}[section]
\newtheorem{proposition}[theorem]{Proposition}
\newtheorem{lemma}[theorem]{Lemma}
\newtheorem{corollary}[theorem]{Corollary}
\newtheorem{theoremintro}{Theorem}
\newtheorem{propositionintro}[theoremintro]{Proposition}
\theoremstyle{definition}
\newtheorem{definition}[theorem]{Definition}
\newtheorem{remark}[theorem]{Remark}
\newtheorem*{remark*}{Remark}
\newcommand{\QD}[1][]{Q_\Delta\ifthenelse{\equal{#1}{}}{}{\left(#1\right)}}
\renewcommand{\j}{\mathbf{j}}
\newcommand{\jD}{\mathbf{j}_D}
\newcommand{\jDz}{\mathbf{j}_{D_0}}
\newcommand{\zmatrix}{\begin{pmatrix}  z & -z^{2} \\ 1 & -z \end{pmatrix}}
\DeclareMathOperator{\Diff}{Diff}
\DeclareMathOperator{\Spec}{Spec}
\DeclareMathOperator{\Pic}{Pic}
\newcommand{\deghat}{\widehat{\deg}}
\numberwithin{equation}{section}
\numberwithin{equation}{section}
\begin{document}
\title{Singular moduli of higher level and special cycles}
\author{Stephan Ehlen}
\email{stephan.ehlen@mcgill.ca}
\address{McGill University, Department of Mathematics and Statistics, 805 Sherbrooke St. West, Montreal, Quebec, Canada H3A 0B9}
\thanks{This work is partially supported by DFG grant BR-2163/4-1.}
\subjclass[2010]{11F11, 11G15}

\date{\today}

\begin{abstract}
  We describe the complex multiplication (CM) values of modular functions for $\Gamma_0(N)$
  whose divisor is given by a linear combination of Heegner divisors 
  in terms of special cycles on the stack of CM elliptic curves.
  In particular, our results apply to Borcherds products of weight $0$ for $\Gamma_0(N)$.
  By working out explicit formulas for the special cycles, we obtain the prime ideal factorizations of such CM values.
\end{abstract}
\maketitle

\section{Introduction}

Let $d < 0$ be a negative fundamental discriminant and denote by $\calQ_{d}$
the set of (positive and negative definite) integral binary quadratic forms of discriminant $d$. 
For every $Q  \in \calQ_{d}$ given by $Q(x,y) = ax^{2} + bxy + cy^{2}$
the unique root of $Q(\tau,1)=0$ in $\uhp$, the complex upper half-plane, is denoted $\alpha_{Q}$.
These points are called CM points because the associated elliptic curve  has complex multiplication (CM). 
That is, its endomorphism ring is an order in an imaginary quadratic field (in our case the ring of integers). 
The values of $j(\tau)$ of the $j$-invariant at CM points are classically called singular moduli
\cite{weber-algebra-3, zagiertraces}. 
For $Q \in \calQ_d$, the value $j(\alpha_{Q})$ is an algebraic integer which
generates the Hilbert class field $H$ of $k_{d} = \Q(\sqrt{d})$.

In their seminal article, Gross and Zagier \cite{grosszagier-singularmoduli} consider the modular function of level one
\begin{equation*}
  \Psi(z,d) = \prod_{Q \in \SL_2(\Z) \bs \calQ_{d}} (j(z)-j(\alpha_{Q}))^{1/w_{d}},
\end{equation*}
for $z \in \uhp$, and where $w_{d}$ is the number of roots of unity in $k_{d}$.
They proved that for a negative fundamental discriminant $D$ coprime to $d$,
we have
\begin{equation}
  \label{eq:gz}
  \prod_{Q \in \SL_2(\Z) \bs \calQ_{D}}\Psi(\alpha_{Q},d)^{2/{w_{D}}}
  = \pm \prod_{\substack{x\in \Z,\, n,n'>0\\ 4nn'=dD -x^2}}n^{\epsilon(n')},
\end{equation}
where $\epsilon(n') = \pm 1$ is given explicitly in \cite{grosszagier-singularmoduli}.

Gross and Zagier give an analytic and an algebraic proof of \eqref{eq:gz}.
The latter is given for prime discriminants only, but does in fact provide 
the prime ideal valuations of the values $\Psi(\alpha_Q,d)$. 
Dorman \cite{dorman-j} generalized this result to fundamental discriminants.
Moreover, the contributions at the finite places to the height pairing studied
in \cite{gkz} essentially provide a generalization to higher level.

In this paper, we refine results of Bruinier and Yang \cite{bryfaltings} to relate the prime
factorization of special values of certain modular functions to special cycles on
the stack of CM elliptic curves. This has been used in \cite{ehlen-diss, ehlen-cmtheta}
to relate the coefficients of harmonic weak Maa\ss{} forms of weight one to these special cycles
and to study the modularity of related arithmetic generating series.
We use these results to determine the prime ideal factorization of such modular functions explicitly.
In \cref{sec:borcherds-products} we show that we can apply our results to Borcherds products of weight zero.

We now define the higher level singular moduli we are considering.
Let $d$ be a negative integer congruent to a square modulo $4N$ and let
$\mathcal{Q}_{d,N,r}$ be the set of integral binary quadratic forms 
$\left[a,b,c\right]=ax^2+bxy+cy^2$ of discriminant $d=b^2-4ac$ 
such that $a$ is congruent to $0$ modulo $N$ and $b$ is congruent to $r$ modulo $2N$.
For each form $Q = \left[a,b,c\right] \in \mathcal{Q}_{d,N,r}$
there is an associated CM point $\alpha_Q=\frac{-b+\sqrt{d}}{2a}$ 
in $\uhp$. The group $\Gamma_0(N)$ acts on $\mathcal{Q}_{d,N,r}$ with finitely many orbits.
We define the Heegner divisor $Z(d,r)$ to be the $\Gamma_0(N)$-invariant divisor (with rational coefficients)
\[
   Z(d,r) = \sum_{Q \in \calQ_{d,N,r}} \frac{\alpha_Q}{w_Q},
\]
where $w_Q$ is the order of the stabilizer of $Q$ in $\Gamma_0(N)$.
We also denote the image of $Z(d,r)$ in $Y_0(N) = \Gamma_0(N) \bs \uhp$ and
its compactification $X_0(N)$ in the same way.

Now fix a square-free negative fundamental discriminant $D$ and let $H$ be the Hilbert class field of the imaginary quadratic field $k = k_{D}$
of discriminant $D$. We will write $\calO \subset k$ for the ring of integers in $k$ and $\N(\cdot) = \N_{k/\Q}(\cdot)$ for the norm of $k/\Q$.
The ring of integers in $H$ is denoted $\calO_H$.
We fix an embedding of $H$ into $\C$ and for $\rho \in \Z$ with $\rho^2 \equiv D \bmod{4N}$ 
the integral ideal
\[
  \frakn = \left(N,\frac{\rho+\sqrt{D}}{2}\right)
\]
of norm $N$ in $k$.
Consider the Heegner point $z_{D,\rho}$ contained in $Z(D,\rho)$ given by
\[
  z_{D,\rho} = \frac{\rho+\sqrt{D}}{2N} \in \uhp.
\]
Representatives for the remaining points in $Z(D,\rho)$ can be obtained as follows.
If $\fraka$ is an integral ideal of $k$, then $\fraka\frakn$ is an ideal of norm divisible by $N$.
If we let $\alpha,\beta$ be generators of $\fraka\frakn$, such that $\alpha = \N(\fraka)N$, then the point $\beta/\alpha$ lies in $\uhp$ and as the class $[\fraka]$ runs through the ideal class group of $k$, these points run through half of a system of representatives of $Z(D,\rho)$. The other half is obtained by also considering $-\bar{z}$ for each of these points.

We will now give a simpler version of our main result, \cref{thm:divf2}, for prime level.
Let $N=p$ be a prime and consider a meromorphic modular function $f$ on $Y_0(p)$.
We assume in the introduction that $f$ is invariant under the Fricke involution $z \mapsto -1/pz$ and has a divisor on $\uhp$ of the form
  \[
    \divisor(f) = \sum_{r \bmod{2p}} \sum_{\substack{d\, \in\, \Z_{<0} \\ d\, \equiv\, r^2 \bmod{4p}}} c(d,r) Z(d,r).
  \]
Moreover, we assume that $\divisor(f)$ and $Z(D, \rho)$ have no common points
and that the Fourier coefficients of $f$ are coprime integers.
In particular, we have that $f \in \Q(j(\tau), j(p\tau))$.
By the theory of complex multiplication, the value $f(z_{D,\rho})$ is contained in the Hilbert class field $H$.
The following theorem gives the multiplicity of a prime of the Hilbert class field
in the prime ideal factorization of $f(z_{D,\rho})$ in terms of certain special cycles $\calZ(m, \frakn, \mu)$ on $\Spec \calO_{H}$ that parametrize elliptic curves with special endomorphisms of norm $m\N(\frakn)$ satisfying a congruence condition given by $\mu \in \different{k}^{-1}\frakn/\frakn$ (see \hyperref[sec:moduli-cm-elliptic]{Sections}~\ref{sec:moduli-cm-elliptic} and~\ref{sec:spec-endom-p} for definitions). Here, $\different{k}^{-1}$ denotes the inverse different of $k$.
We write $\calZ(m,\frakn,\mu)$ as a formal sum
\[
 \calZ(m, \frakn, \mu) = \sum_{\frakP} \calZ(m, \frakn, \mu)_\frakP \frakP,
\]
where the sum runs over all nonzero prime ideals of $H$ and $\calZ(m, \frakn, \mu)_\frakP \in \Q$.
These cycles are generalizations of those first considered
by Kudla, Rapoport and Yang \cite{kry-tiny} in connection with the coefficients 
of the derivative of an incoherent Eisenstein series of weight one.
\begin{theoremintro}
  \label{thm:divf2-intro}
  We have for every prime ideal $\frakP$ of the Hilbert class field
  \[
    \ord_{\mathfrak{P}}(f(z_{D,\rho})) = w_{D} \sum_{r \bmod{2p}} \sum_{d\, \in\, \Q_{<0}} c(d,r)
        \sum_{\substack{n\, \equiv\, \rho \cdot r \bmod{2p} \\ n^{2} \leq dD}}
        \calZ\left(\frac{dD - n^{2}}{4p\abs{D}}, \frakn, \frac{n+r\sqrt{D}}{2\sqrt{D}} \right)_{\frakP}.
  \]
\end{theoremintro}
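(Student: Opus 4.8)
The plan is to interpret $\ord_{\frakP}(f(z_{D,\rho}))$ as a local arithmetic intersection number and then to evaluate that intersection by counting special endomorphisms. First I would fix an integral model $\calX_0(p)$ of $X_0(p)$ over $\calO_H$ and extend the CM point $z_{D,\rho}$ to a section $\calC \colon \Spec \calO_H \to \calX_0(p)$, using the theory of complex multiplication to guarantee that $z_{D,\rho}$ is defined over $H$ and has potentially good reduction. Since the Fourier coefficients of $f$ are coprime integers, $f$ is a rational function on $X_0(p)$ whose divisor is the integral combination $\sum_{d,r} c(d,r) Z(d,r)$, and the hypothesis that $\divisor(f)$ and $Z(D,\rho)$ have no common points means that $\calC$ meets $\divisor(f)$ only in the special fibres. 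Under these assumptions the valuation of the value is the local intersection pairing,
\[
  \ord_{\frakP}(f(z_{D,\rho})) = (\divisor(f) \cdot \calC)_{\frakP} = \sum_{r \bmod 2p} \sum_{d} c(d,r)\, (Z(d,r) \cdot \calC)_{\frakP},
\]
so everything reduces to computing the local numbers $(Z(d,r) \cdot \calC)_{\frakP}$ for a single Heegner divisor.

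The second step is to give these local intersection numbers a moduli-theoretic meaning in terms of the special cycles defined in Sections~\ref{sec:moduli-cm-elliptic} and~\ref{sec:spec-endom-p}. The section $\calC$ corresponds to the CM elliptic curve $E = \C/\calO$ equipped with the cyclic $p$-isogeny determined by the ideal $\frakn$ of norm $p$, and $Z(d,r)$ is the locus of points carrying a \emph{special endomorphism} whose associated quadratic form lies in $\calQ_{d,p,r}$; the congruences $a \equiv 0 \bmod p$ and $b \equiv r \bmod 2p$ are precisely the conditions making such an endomorphism compatible with the level-$p$ structure. A geometric point of $Z(d,r) \cap \calC$ over $\frakP$ is thus a reduction of $(E,\frakn)$ that acquires an extra special endomorphism $x$; together with the endomorphism $\sqrt{D}$ coming from the CM structure, this produces a rank-two lattice of special endomorphisms. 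I would decompose the intersection according to the cross term $n$ given by the pairing of $x$ with the CM endomorphism. The integer $n$ is constrained to $n \equiv \rho\, r \bmod 2p$ by compatibility with $\frakn$, while definiteness of the lattice forces $n^2 \leq dD$; a direct computation of norms then shows that $x$ has reduced norm $m\,\N(\frakn) = mp$ with $m = \tfrac{dD - n^2}{4p\abs{D}}$, and that its residue modulo $\frakn$ is recorded by the class $\mu = \tfrac{n + r\sqrt{D}}{2\sqrt{D}} \in \diffinv{k}\frakn/\frakn$.

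By the local length computation attached to the special cycles (the Gross--Keating type deformation analysis recalled in \cref{sec:spec-endom-p}), the contribution of all points with a fixed cross term $n$ to $(Z(d,r) \cdot \calC)_{\frakP}$ is exactly $\calZ\!\left(\tfrac{dD-n^2}{4p\abs{D}}, \frakn, \tfrac{n+r\sqrt{D}}{2\sqrt{D}}\right)_{\frakP}$. Summing over the admissible $n$ and reinserting the coefficients $c(d,r)$ recovers the right-hand side, once the overall factor $w_D$ is accounted for: it arises from comparing the moduli-theoretic count of special endomorphisms, which is insensitive to the $\abs{\calO^{\times}} = w_D$ automorphisms of $E$, with the geometric intersection normalised by the stabiliser weights $1/w_Q$ in the definition of $Z(d,r)$ and $Z(D,\rho)$.

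The main obstacle I anticipate is the arithmetic bookkeeping of the second step: verifying that $\mu = \tfrac{n+r\sqrt{D}}{2\sqrt{D}}$ genuinely lands in $\diffinv{k}\frakn/\frakn$ and simultaneously encodes the mod-$\frakn$ condition and the level-$p$ congruence, and pinning down the exact normalisation of the norm through the factor $4p\abs{D} = 4\,\N(\frakn)\,\N(\different{k})$. Making the Fricke invariance interact correctly with the two ``halves'' of the CM divisor $Z(D,\rho)$ — so that the single value $f(z_{D,\rho})$ detects the full special cycle — is the other delicate point, and is where I expect the $W_p$-invariance hypothesis on $f$ to be used.
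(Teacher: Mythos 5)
Your overall strategy is the one the paper uses: identify $\ord_{\frakP}(f(z_{D,\rho}))$ with the local multiplicity of the pullback of $\divisor(f)$ along the CM cycle (the paper phrases this as $\divisor(\jD^{*}f)=\jD^{*}(\divisor(f))$ for the map $\jD\colon C_{D}\to\calX_0(N)$ of \cref{lem:cdZisom}, rather than as intersection with a section over $\Spec\calO_H$, but the local computation is the same), and then decompose $\jD^{*}\calZ(d,r)$ by the cross term $n$ of a special endomorphism against the CM endomorphism, which is exactly \cref{lem:pullback1} (Bruinier--Yang, Lemma 7.12). The identification of $m=\frac{dD-n^2}{4p\abs{D}}$, $\mu=\frac{n+r\sqrt{D}}{2\sqrt{D}}$, and the provenance of the factor $w_D$ from the generic automorphism group $\calO_D^\times$ in the pushforward to the coarse space all match the paper's argument.

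There is one genuine gap: the passage from ``the Fourier coefficients of $f$ at $\infty$ are coprime integers'' to ``$\divisor(f)$ on the integral model is $\sum_{d,r}c(d,r)\calZ(d,r)$ plus boundary.'' Coprimality of the coefficients at $\infty$ only shows, via \cref{prop:fmultFourier}, that the component of the fibre of $\calX_0(p)$ over $p$ passing through the cusp $\infty$ does not occur in $\divisor(f)$; the fibre at $p$ has a \emph{second} irreducible component (through the cusp $0$), and a vertical multiplicity there would contribute to $\ord_{\frakP}(f(z_{D,\rho}))$ for $\frakP\mid p$ without being seen by any special cycle. This is precisely where the Fricke invariance is used in the paper: $f\mid W_p=\pm f$ forces the expansion at the cusp $0$ to have coprime integral coefficients as well, so $\nu_{p,0}(f)=0$ and the divisor is genuinely horizontal (cf.\ the argument in \cref{thm:bopintegral}). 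You flag the $W_p$-invariance as a delicate point, but you attribute it to reconciling the two ``halves'' $z$ and $-\bar z$ of $Z(D,\rho)$ --- that is not its role here (the theorem concerns the single value $f(z_{D,\rho})$, and the other points of the orbit are handled by Galois conjugation and \cref{prop:actZsigma}). As written, your first displayed identity for $\ord_{\frakP}$ is therefore unjustified at the primes $\frakP\mid p$, which are exactly the primes where the cycles $\calZ(m,\frakn,\mu)$ can be nonzero. The rest of the argument is sound and follows the paper's route.
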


\begin{remark*}\ 
  \begin{enumerate}
  \item The set of CM values of $f$ at points in $Z(D,\rho)$ forms a single Galois orbit and the valuations of other points in $Z(D,\rho)$
can be obtained using Shimura reciprocity and \cref{prop:Zprime-intro} below.
\item Note that there exist modular functions whose divisor is given by a linear combination of Heegner divisors
on $Y_0(N)$ that are not obtained as Borcherds products.
This has been remarked by Borcherds \cite[Example 5.2]{bogkz} and follows
from Theorem 7.7 of \cite{bryfaltings}. We also remark that Bruinier \cite{jan-converse} proved a converse
theorem for Borcherds products which shows that the case of modular curves
is quite exceptional in this regard.
  \end{enumerate}
\end{remark*}

To obtain formulas for the quantities in \cref{thm:divf2} or \cref{thm:divf2-intro} above, we use Gross' formula 
for the length of the local rings of $\calZ(m, \frakn, \mu)$ as in \cite{kryderivfaltings}, where the arithmetic degrees of these cycles have been computed.
In the case of a prime discriminant these formulas are completely explicit and rather simple, as we shall see below.

Fix a fractional ideal $\frakn \subset k$ and let $\mu \in \different{k}^{-1}\frakn/\frakn$ and $m \in \Q_{>0}$.
We write $[\fraka]$ for the class of the fractional ideal $\fraka$ in the class group $\Cl_k$ of $k$.
We define a set of rational primes by
\begin{equation*}
  \label{eq:Diff-intro}
  \Diff(m) = \{ p < \infty\ \mid\ (-m\N(\fraka),D)_p = -1 \}
\end{equation*}
and for $n \in \Z$ and $[\frakb] \in \Cl_k$ we let
\[
  \rho(n,[\frakb]) = \# \{ \fraka \subset \calO \ \mid\ \N(\fraka) = n,\, \fraka \in [\frakb] \}.
\]
If $m$ is a rational number and $p$ is a rational prime which is non-split in $k$, we define
\begin{equation*}
  \nu_{p}(m) =
  \begin{cases}
    \frac{1}{2}(\ord_{p}(m)+1), & \text{if $p$ is inert in $k$},\\
    (\ord_{p}(m \abs{D})), & \text{if $p$ is ramified in $k$}.
  \end{cases}
\end{equation*}

We obtain the following result (see \cref{prop:Zprime}).
\begin{propositionintro}\ 
  \label{prop:Zprime-intro}
  Let $D=-l$ for a prime $l \equiv 3 \bmod{4}$.
  \begin{enumerate}
  \item We have $\calZ(m, \frakn, \mu)_{\frakP} = 0$ unless $\abs{\Diff(m)} = 1$ and $m + \N(\mu)/\N(\fraka) \in \Z$.
  \item Assume that $\Diff(m) = \{p\}$ and that $m + \N(\mu)/\N(\fraka) \in \Z$. 
    There is a unique prime ideal $\frakP_0 \mid p$ fixed by complex conjugation, $\bar\frakP_{0} = \frakP_{0}$.
    For $\frakP = \frakP_{0}^{\sigma}$, where $\sigma = \sigma(\frakb)$
    corresponds to the ideal class of $\frakb$ under the Artin map $(\cdot,H/k)$, we have
\[
    \calZ(m,\fraka,\mu)_{\frakP} =
      2^{o(m)-1} \nu_{p}(m) \rho(m\abs{D}/p, [\fraka\frakb^{-2}]),
  \]
where $o(m) = 1$ if $\ord_{l}(ml)>0$ and $o(m) = 0$, otherwise.
  \end{enumerate}
\end{propositionintro}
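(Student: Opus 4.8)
The plan is to reduce the computation of each multiplicity $\calZ(m,\fraka,\mu)_\frakP$ to a local problem at $\frakP$ and then to evaluate the local term by Gross' length formula for the deformation rings of special endomorphisms, exactly as in the arithmetic degree computations of \cite{kryderivfaltings}. By the moduli description recalled in \cref{sec:moduli-cm-elliptic,sec:spec-endom-p}, a CM elliptic curve contributing to $\calZ(m,\fraka,\mu)$ at $\frakP$ must reduce to a supersingular curve modulo $\frakP$, so that the reduction of its endomorphism ring is an order in the definite quaternion algebra $B_{p,\infty}$ ramified exactly at the rational prime $p$ below $\frakP$ and at $\infty$. A special endomorphism of the prescribed norm then exists $p$-adically only if $p$ is non-split in $k$, and the local condition at all other finite places must be solvable; the product formula for the Hilbert symbol, together with the archimedean sign forced by $D<0$, shows that the set of unsolvable places $\Diff(m)$ has odd cardinality and that a single supersingular prime can only be fed by the case $\abs{\Diff(m)}=1$. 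The congruence $m+\N(\mu)/\N(\fraka)\in\Z$ is precisely the integrality required for the special endomorphism in the prescribed class $\mu\in\different{k}^{-1}\fraka/\fraka$ to lift to an honest integral endomorphism, which together establishes (i).

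For (ii), write $\Diff(m)=\{p\}$. Since $(-m\N(\fraka),D)_p=-1$ forces $p$ to be non-split, the prime $\frakp$ of $k$ above $p$ is either $p\calO$ (if $p$ is inert) or the ramified prime $(\sqrt{D})$ (if $p=l$), and in both cases $\frakp$ is principal. Its Artin symbol in $\Gal(H/k)\cong\Cl_k$ is therefore trivial, so $\frakp$ splits completely in $H$ and the primes $\frakP\mid p$ form a single orbit on which $\Cl_k$ acts simply transitively. Complex conjugation acts on this orbit through the inversion $[\frakb]\mapsto[\frakb]^{-1}$ of $\Cl_k$; as $D$ is a prime discriminant the class number is odd, so $\sigma=\sigma^{-1}$ only for $\sigma=1$, which yields the unique conjugation-fixed prime $\frakP_0$ and the labelling $\frakP=\frakP_0^{\sigma(\frakb)}$.

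The remaining task is to evaluate the local multiplicity at $\frakP=\frakP_0^{\sigma(\frakb)}$, and this is where the main difficulty lies. Gross' analysis of quasi-canonical liftings of a supersingular curve equipped with a special endomorphism computes the length of the relevant deformation ring, and the successive levels of these liftings contribute exactly the factor $\nu_p(m)$, with the two branches of the definition of $\nu_p$ matching the inert and the ramified cases. The special endomorphisms of the correct norm reducing to a fixed supersingular curve are matched, via the Deuring correspondence, with integral ideals of $k$ of norm $m\abs{D}/p$; carrying the Galois twist by $\sigma(\frakb)$ through this dictionary shows that the relevant ideals lie in the class $[\fraka\frakb^{-2}]$, so that their count is $\rho(m\abs{D}/p,[\fraka\frakb^{-2}])$. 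The final factor $2^{o(m)-1}$ comes from the contribution of units and from the behaviour at the ramified prime $l$. I expect the delicate bookkeeping here --- matching the congruence class $\mu$ to the norm datum, pinning down the precise shift to $[\fraka\frakb^{-2}]$, and isolating $2^{o(m)-1}$ in the ramified case $p=l$ where Gross' length formula and the different interact --- to be the main obstacle; the inert case is comparatively clean. Assembling the three factors then gives the stated formula.
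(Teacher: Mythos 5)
Your overall strategy is the one the paper follows: part (i) via the quaternionic local--global obstruction and the product formula for the Hilbert symbol, plus the integrality forced by the condition $x+\mu\in\calO_E\fraka$; part (ii) by combining Gross's length formula (the factor $\nu_p(m)$), a Deuring-type count of special endomorphisms, and the class-group equivariance of the cycles, with the existence and uniqueness of the conjugation-fixed prime deduced from the oddness of $h_k$ exactly as in \cref{prop:Zprime}. However, the two steps you defer as ``delicate bookkeeping'' are not bookkeeping; they are where the content of the proposition lies, and as written your argument does not close them.

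The first gap is the normalization of the base point. Counting special endomorphisms on a fixed supersingular curve $(E_0,\iota_0)$ produces the congruence-representation number $\rho_0\bigl(\tfrac{m}{\kappa_p}\N(\fraka),\fraka,\mu\bigr)$ of \cref{prop:pvals-1}, which depends on the explicit maximal order $\calO_{\frakc_0,\lambda_0,\B_p}=\End(E_0,\iota_0)$ and hence on the choice of $\lambda_0$ and of the point $(E_0,\iota_0)$; a priori the ideal class in the final formula is determined only up to the ambiguity discussed after \cref{lem:quatord}. To arrive at $[\fraka\frakb^{-2}]$ measured from the \emph{conjugation-fixed} prime, one must identify which geometric point of $C_D(\bar\F_p)$ is the reduction, at that prime, of the canonical lift of $\C/\OD$; the paper does this by working over the completion $W$ of the maximal unramified extension of the local ring at that prime and invoking Serre--Tate together with Lemma 3.5 of \cite{grosszagier-singularmoduli} to get $\End_{W/\pi}(E)\cong\calO_{\OD,\lambda,\B_p}$, whence the conjugation-fixed prime is $\frakP_0^{\sigma(\frakb)}$ with $[\frakb]^2=[\frakp_0]$ and the base case of the formula reads $\rho(m\abs{D}/p,[\fraka])$. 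Your appeal to ``carrying the Galois twist through the dictionary'' presupposes this identification rather than proving it. The second gap is the passage from $\rho_0$ to $\rho(m\abs{D}/p,\cdot)$ together with the factor $2^{o(m)-1}$: this is \cref{prop:pvals-2}, which requires summing over $\Clk[2]$ (equivalently over the admissible $\lambda$'s), analyzing the map $\alpha\mapsto\alpha\frakc_0\bar\fraka^{-1}$ and its fibers of size $\tfrac{w_k}{2}\cdot 2^{o(m)}$, and observing that for $D=-l$ the statement descends to a single prime of $H$ only because $h_k$ is odd, so that $\Gal(H/k)[2]$ is trivial. Neither the exponent $2^{o(m)-1}$ nor the role of the prime-discriminant hypothesis is actually derived in your proposal.
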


\subsection{Acknowledgements}
This article contains some results that were contained in my thesis \cite{ehlen-diss}.
First and foremost I would like to thank my advisor Jan Hendrik Bruinier for his constant support and encouragement.
I also thank Claudia Alfes and Tonghai Yang for their help. I also thank Benjamin Howard for helpful discussions and in particular for helping me with the proof of \cref{prop:compActions}.
Moreover, the helpful comments of the anonymous referee are greatly appreciated.

\subsection{Outline}
The paper is organized as follows: In \cref{sec:prelims}, we set up the notation and give basic definitions.
The moduli stack of CM elliptic curves is introduced in \cref{sec:moduli-cm-elliptic} and in \cref{sec:spec-endom-p} we describe the special cycles and
prove the explicit formulas for the multiplicities.
The integral model for the modular curve $X_0(N)$ will be recalled in \cref{sec:an-integral-model} and in \cref{sec:cm-modular-functions}
we prove the main result of this paper (\cref{thm:divf2}).
The applications to Borcherds products are discussed in \cref{sec:borcherds-products} and finally,
 \cref{sec:examples} covers some numerical examples illustrating our results.

\providecommand{\zsmatrix}[1]{\bigl ( \begin{smallmatrix} #1 \end{smallmatrix}  \bigr )}

\section{Preliminaries and notation}
\label{sec:prelims}
We write $\Z$ for the integers, $\Q$ for the field of rational numbers,
$\Z_p$ for the ring of $p$-adic integers and $\Q_p$ for the field of $p$-adic numbers. 
We also use the notation $\Zhat = \prod_p\Z_p$, where the product is over all primes.
We write $\adeles$ for the adeles of $\Q$ and use the notation $\adeles_f$ for the finite adeles of $\Q$ 
If $k$ is a number field, we write $\calO_k$ for its ring of integers and 
$\adeles_k$ (and $\adeles_{k,f}$) for the adeles (and finite adeles) of $k$. 
Moreover, we write $\hat\calO_k$ for the tensor product $\calO_k \otimes_\Z \Zhat$.

\subsection{Modular forms}
\label{sec:scal-valu-modul}
We briefly recall the definition of modular forms for $\Gamma_0(N)$.
We refer to one of the standard references
for details, for instance \cite{koblitz, miyake-mf, onowebmod, wstein-modforms}.

Let $N$ be a positive integer and consider the congruence
subgroup $\Gamma_0(N) \subset \SL_2(\Z)$ defined by
\[
\Gamma_0(N) = \left\lbrace \abcd \in \SL_2(\Z)\ \mid\ c \equiv 0 \bmod{N} \right\rbrace.
\]
For an integer $k \in \Z$ and a matrix $\gamma \in \GL_{2}^{+}(\Q)$,
the \emph{Petersson slash operator} is defined on functions
$f: \uhp \rightarrow \C$ by
\[
(f\mid_{k}\gamma)(\tau) = (c\tau+d)^{-k} \det(\gamma)^{k/2} f(\gamma \tau), \text{ where } \gamma = \abcd \in \GL_{2}^{+}(\Q).
\]
\begin{definition}
  A meromorphic function $f: \uhp \rightarrow \C$ is called
  a \emph{meromorphic modular form} of weight $k \in \Z$ for $\Gamma_0(N) \subset \SL_2(\Z)$ if
  \begin{enumerate}
  \item $(f\mid_{k}\gamma)(\tau)= f(\tau)$ for all $\gamma \in \Gamma_0(N)$
  \item and $f$ is meromorphic at the cusps of $\Gamma_0(N)$.
  \end{enumerate}
\end{definition}
The conditions at the cusps can be phrased in terms of Fourier expansions.
A meromorphic modular form $f$ has a Fourier expansion (at the cusp $\infty$) of the form
\[
f(\tau) = \sum_{n \gg -\infty}^{\infty}a_{f}(n)q^{n},
\]
where $q = e^{2 \pi i \tau} = e(\tau)$.
That is, if $f$ is a meromorphic modular form, then there are only finitely many non-vanishing Fourier coefficients
$a_f(n)$ with $n<0$.
The conditions at the other cusps are similar.
By a modular function for $\Gamma_0(N)$, we mean a meromorphic modular form of weight $0$ for $\Gamma_0(N)$.
It is well known \cite{shimauto} that the field of modular functions for $\Gamma_0(N)$ is generated by $j(\tau)$ and $j_N(\tau) = j(N\tau)$.

\subsection{Elliptic curves}
\label{sec:elliptic-curves}
We recall the definition of an elliptic curve over an arbitrary base scheme. 
We refer to Chapter 2 of the book by Katz and Mazur \cite{katz-mazur}
and the two books by Silverman \cite{silverman-ec, silverman-advanced} for details.

\begin{definition}
  \label{def:ec}
  Let $S$ be a scheme. A proper smooth curve $E \rightarrow S$
  with geometrically connected fibers of genus one together
  with a section $0: S \rightarrow E$, is called an \emph{elliptic curve}.
\end{definition}
It is an important fact that an elliptic curve admits a unique structure
as a group scheme \cite[Theorem 2.1.2]{katz-mazur}.
It is well known (see \cite[Corollary 9.4]{silverman-ec} and \cite{deuring-cm}) that
the endomorphism ring of an elliptic curve over a field
is either $\Z$, an order in an imaginary quadratic field
or an order in a quaternion algebra.

We will use the notation
\[
\B = \legendre{a,b}{F},
\]
to denote the quaternion algebra $F \oplus F\alpha \oplus F\beta \oplus F\alpha\beta$
with $\alpha^2 = a$ and $\beta^{2} = b$ and $\alpha\beta = -\beta\alpha$.
We say that the quaternion algebra $\B$ is \emph{split} over $F$
if $\B$ is isomorphic to $M_2(F)$. Here, $M_2(F)$ is the algebra of $2 \times 2$-matrices over $F$.

For a quaternion algebra $\B$ over $\Q$, we say that $\B$ is split at a prime $p$ (or $\infty$),
if $\B \otimes_{\Q} \Q_p$ is split over $\Q_p$. Otherwise, we say that
$\B$ is \emph{ramified} at $p$ (or $\infty$). We use the same convention over other fields.

Let $E/k$ be an elliptic curve over a field $k$.
If the endomorphism ring $\End_k(E)$ contains an
order $\calO \subset k$ of an imaginary quadratic field $k$,
then we say that $E$ has \emph{complex multiplication} by $\calO$ (or just that $E$ has CM).

A \emph{CM elliptic curve} over a scheme $S$ is a pair $(E,\iota)$ consisting
of an elliptic curve $E/S$ and an action $\iota: \calO \hookrightarrow \End_S(E)$
of an order $\calO$ in an imaginary quadratic field on $\End_S(E)$.

If $\End_S(E) = \calO \subset \B$ is an order in a quaternion algebra,
then we say that $E$ is \emph{supersingular}.
Supersingular elliptic curves only occur in positive characteristic
(cf. VI., Theorem 6.1 of \cite{silverman-ec}).

\subsection{The Hilbert class field}
\label{sec:hilbertclass}
We recall only very briefly some facts from class field theory
that we need later, in particular the Artin map.
We refer to \cite[Chapter II]{silverman-advanced} or \cite{shimauto} for details.
Let $k$ be a totally imaginary field and let $L$
be a finite abelian extension of $k$. Write $\calO_{L}$ for the ring of integers in $L$.
Let $\frakp$ be a prime ideal of $k$ that does not ramify in $L$.
There is a unique element $\sigma_{\frakp} \in \Gal(L/k)$, such that
\[
\sigma_{\frakp}(x) \equiv x^{\N_{L/k}(\frakp)} \bmod{\frakP}
\]
for all $x \in \calO_{L}$ and any prime ideal $\frakP \mid \frakp$.
If $\frakc$ is an integral ideal of $k$ which is divisible by all primes that ramify in $L/k$
and $I(\frakc)$ is the group of fractional ideals that are relatively prime to $\frakc$,
then the Artin map is defined as
\[
(\cdot, L/k): I(\frakc) \rightarrow \Gal(L/k),
\]
by extending the map $\sigma_{\frakp}$ linearly.

We let $k$ be an imaginary quadratic field and $H$ be the Hilbert class field
of $k$. This is the maximal unramified abelian extension of $k$. It corresponds to the ray class field for $\frakc=1$.
By class field theory, we obtain in this case \cite[II, Example 3.3]{silverman-advanced}
an isomorphism via the Artin map
\[
(\cdot, H/k): \Clk \rightarrow \Gal(H/k).
\]
We will use the convention that we write $\sigma(\fraka) = \sigma([\fraka])$
for $([\fraka],H/k)$, the image of the (class of the) fractional ideal $\fraka$ under this map.

\section{Moduli of CM elliptic curves}
\label{sec:moduli-cm-elliptic}
In this section we recall some facts about the moduli stack of CM elliptic curves
and special $0$-cycles on it. These cycles will play an important role
in the description of the values of modular functions on $Y_{0}(N)$.

Let $S$ be a scheme. By a scheme over $S$ or an $S$-scheme, we mean a scheme
$X$ together with a morphism $\pi: X \rightarrow S$.
We denote by $(\Sch/S)$ the category of schemes over $S$.
The morphisms in this category are morphisms of $S$-schemes, that is,
morphisms that are compatible with the given morphisms to $S$.
Here and throughout, if $X$ is a stack over a base scheme $S$ and $T$ is an $S$-scheme,
we abbreviate
\[
  X_{/T} = X \times_{S} T.
\]
If $T = \Spec R$ with $R$ a ring, we simply write $X_{/R}$ for $X_{/\Spec R}$.

Here and for the rest of this section, let $D$ be
a negative fundamental discriminant and denote by $k = k_{D}$ the imaginary quadratic field
of discriminant $D < 0$. We write $\OD$ for the ring of integers of $k$
and let $H = H_D$ be the Hilbert class field of $k$.

We consider the moduli problem
which assigns to a base scheme $S$ over $\OD$ the category $C_D^{+}(S)$ of pairs $(E,\iota)$, where
\begin{enumerate}
\item $E$ is an elliptic curve over $S$ with
complex multiplication $\iota: \OD \hookrightarrow \End(E)$,
\item such that the induced map
  \begin{equation}
    \label{eq:lie}
    \Lie(\iota): \OD \rightarrow \End_{\calO_{S}}(\Lie E) = \calO_{S},
  \end{equation}
  coincides with the structure map $S \to \Spec(\OD)$.
\end{enumerate}
The morphisms in this category are isomorphisms respecting the actions.

Moreover, we denote by $C_{D} = \cRes_{\OD/\Z}(C_{D}^{+})$ the restriction
of coefficients of $C_{D}^{+}$ to $\Z$ (in the sense of Grothendieck).
That is, the structure map of $C_{D}$ is given by $C_{D}^{+} \rightarrow \Spec (\OD) \rightarrow \Spec(\Z)$.
This describes the moduli problem without the normalization \eqref{eq:lie}.

\begin{proposition}
  \label{prop:Cstack}
  The moduli problem $C_{D}^{+}$ is represented by an algebraic stack,
  also denoted by $C_{D}^{+}$,
  which is smooth of relative dimension 0 and proper over $\Spec \OD$.
  If $R$ is a discrete valuation ring
  with algebraically closed residue field $\F$, the reduction map
  \[
    C_{D}^{+}(R) \rightarrow C_{D}^{+}(\F)
  \]
  is surjective.
  Consequently, $C_{D}$ is also represented by an algebraic stack of relative dimension $0$
  over $\Spec \Z$, which is finite (and thus proper).
\end{proposition}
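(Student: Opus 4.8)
The plan is to construct $C_D^+$ over the moduli stack of elliptic curves and then to extract its geometric properties from deformation theory and the theory of complex multiplication. Let $\mathcal{M}$ denote the algebraic stack of elliptic curves, a smooth Deligne--Mumford stack of relative dimension $1$ over $\Spec\Z$, base-changed to $\Spec\OD$. First I would study the forgetful $1$-morphism $C_D^+ \to \mathcal{M}_{/\OD}$ sending $(E,\iota)$ to $E$ and show it is representable by schemes, finite and unramified. Choosing a $\Z$-algebra generator $\omega$ of $\OD$ with minimal polynomial $P \in \Z[X]$, an action $\iota$ is the same datum as the single endomorphism $\phi = \iota(\omega)$ subject to $P(\phi)=0$. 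Since the relative endomorphism scheme $\underline{\End}_S(E)$ of an elliptic curve is unramified and quasi-finite over $S$, the locus $\{P(\phi)=0\}$ is a finite unramified $S$-scheme, and the normalization \eqref{eq:lie} cuts out an open-and-closed part of it. As the source of a representable morphism to an algebraic stack, $C_D^+$ is then itself an algebraic (Deligne--Mumford) stack.

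Next I would prove that the structure morphism $C_D^+ \to \Spec\OD$ is \emph{\'etale}, that is, smooth of relative dimension $0$. By the infinitesimal lifting criterion it suffices to show that the deformation functor of a CM pair $(E,\iota)$ along a square-zero thickening is trivial. Here the normalization \eqref{eq:lie} is decisive: by Grothendieck--Messing (equivalently Serre--Tate) theory, deforming $(E,\iota)$ amounts to lifting the Hodge filtration inside the Dieudonn\'e realization compatibly with the $\OD$-action, and the signature condition \eqref{eq:lie} determines this filtration uniquely. Concretely, the induced $\OD\otimes\Zp$-structure forces the $p$-divisible group of $E$ to be a formal $\OD\otimes\Zp$-module of relative height one, which is Lubin--Tate rigid; this is exactly what makes the CM lift canonical and unique, at ordinary as well as at supersingular points. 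Hence the tangent and obstruction spaces vanish and $C_D^+$ is \'etale over $\Spec\OD$.

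With \'etaleness in hand the remaining assertions are largely formal. Surjectivity of $C_D^+(R) \to C_D^+(\F)$ for a complete (hence Henselian) DVR $R$ with algebraically closed residue field $\F$ follows by lifting an object over $\F$ successively over the $R/\frakm^n$ and algebraizing the resulting compatible system by Grothendieck's existence theorem; this is the existence of the CM lift. For properness over $\OD$ I would invoke the valuative criterion for Deligne--Mumford stacks, where one may pass to a finite extension of the fraction field: an elliptic curve with CM by $\OD$ over the fraction field of a DVR has potential good reduction, its reduction again carries the $\OD$-action, and this supplies the required extension, with uniqueness coming from separatedness (the unramified diagonal). A morphism that is separated, quasi-finite and proper is finite, so $C_D^+ \to \Spec\OD$ is finite; composing with the finite morphism $\Spec\OD \to \Spec\Z$ shows that $C_D$, whose structure map is by definition the composite $C_D^+ \to \Spec\OD \to \Spec\Z$, is finite---and hence proper---of relative dimension $0$ over $\Spec\Z$.

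The real work sits in the second paragraph, and specifically at the primes of $\OD$ lying over rational primes that are inert or ramified in $k$, where the reduction of $(E,\iota)$ is supersingular. There the underlying elliptic curve has a one-dimensional deformation space, and the whole content of the \'etaleness claim is that the full $\OD$-action together with \eqref{eq:lie} collapses it to a point. Making this rigidity precise through the Grothendieck--Messing calculation for formal $\OD\otimes\Zp$-modules, with due care in the ramified case, is the step I expect to be the main obstacle; once the local computation is secured, everything else is standard.
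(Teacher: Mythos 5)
Your proposal is correct and follows essentially the same route as the paper: the paper's proof simply cites the canonical lifting theorem for the smoothness/lifting assertions and the valuative criterion via potential good reduction for properness, and your Grothendieck--Messing/Lubin--Tate rigidity argument together with the potential-good-reduction argument is precisely the content of those citations, just unpacked. The only caveat is that your lifting argument (like the theorem you are reproving) really requires $R$ Henselian or complete, which you correctly note but which the proposition's literal wording glosses over.
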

\begin{proof}
  This is a consequence of the canonical lifting theorem \cite{howard-barbados-notes, Lan}.
  Properness follows from the fact that all points of $C_{D}$
  in characteristic 0 have potentially good reduction and the valuative criterion of properness.
  See \cite[Section 5]{kry-tiny} or \cite{bruinier-howard-yang-unitary} for details.
\end{proof}

\begin{lemma}
  \label{lem:Ccoarse}
  The coarse moduli scheme $\mathbf{C}_{D}^{+}$ of $C_{D}^{+}$ is isomorphic to $\Spec \calO_{H}$
  as a scheme over $\OD$.
  Consequently, the coarse moduli scheme $\mathbf{C}_{D}$ of $C_D$ is isomorphic
  to $\Spec \calO_{H}$ as a scheme over $\Z$.
\end{lemma}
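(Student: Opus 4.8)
The plan is to identify $\mathbf{C}_D^{+}$ with $\Spec\calO_H$ by exhibiting its ring of global functions as the integral closure of $\OD$ in $H$. Concretely, I would reduce the lemma to three assertions about the coarse scheme $\mathbf{C}_D^{+}$: (a) it is finite over $\Spec\OD$; (b) it is normal; and (c) its generic fibre $\mathbf{C}_D^{+}\times_{\OD}k$ is isomorphic to $\Spec H$. Granting these, $\mathbf{C}_D^{+}$ is a normal scheme, finite over $\OD$, which (by the connectedness coming out of (b) and (c) below) is integral with function field $H$; by the universal property of the integral closure its coordinate ring is then exactly $\calO_H$, giving $\mathbf{C}_D^{+}\cong\Spec\calO_H$ over $\OD$.

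For (c) I would invoke the classical theory of complex multiplication. Over $\C$ the objects of $C_D^{+}(\C)$ are the CM elliptic curves $\C/\fraka$ for fractional $\OD$-ideals $\fraka$, and their isomorphism classes form a principal homogeneous space under $\Cl_k$; in particular there are exactly $h=\#\Cl_k$ of them. Since the base has characteristic zero and $C_D^{+}$ has relative dimension $0$, the generic fibre of the coarse scheme is finite \'etale over $k$, and its geometric points are precisely these isomorphism classes. By the main theorem of complex multiplication (see \cite{shimauto, silverman-advanced}) each such point is defined over $H=k(j(\fraka))$, and $\Gal(H/k)$ permutes the points simply transitively through the Artin map. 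Hence the geometric points form a single Galois orbit with residue field $H$, so the generic fibre is $\Spec H$.

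For (a) and (b) I would use \cref{prop:Cstack}. Properness together with relative dimension $0$ makes $C_D^{+}$, and hence its coarse scheme, quasi-finite and proper over $\Spec\OD$, so finite; this gives (a). For (b) I would use that $C_D^{+}$ is smooth over the regular ring $\OD$ and is therefore a regular algebraic stack: \'etale-locally it has the form $[U/G]$ with $U$ regular and $G$ a finite automorphism group, so the coarse scheme is \'etale-locally $\Spec$ of the invariant ring $\Gamma(U,\calO_U)^{G}$, and invariants of a normal domain under a finite group are again normal. Thus $\mathbf{C}_D^{+}$ is a disjoint union of integral normal $\OD$-schemes; the surjectivity of the reduction map asserted in \cref{prop:Cstack} guarantees that every point of a special fibre is a specialization from the generic fibre, so no component is supported purely in a special fibre. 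As the generic fibre $\Spec H$ is connected, $\mathbf{C}_D^{+}$ is integral with function field $H$, and being normal and finite over $\OD$ it equals the integral closure of $\OD$ in $H$, i.e. $\calO_H$. The second statement follows immediately, since $C_D=\cRes_{\OD/\Z}(C_D^{+})$ has the same coarse scheme, now regarded over $\Z$ via $\Spec\OD\to\Spec\Z$.

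The hard part will be justifying (b) in full rigour at the finitely many places where the automorphism group of a CM curve exceeds $\{\pm1\}$ (the discriminants $D=-3,-4$, and residue characteristics dividing the order of an automorphism group), where $C_D^{+}$ may fail to be tame and the \'etale-local quotient description of the coarse scheme requires care. Here I would lean on the detailed local analysis of these CM moduli stacks and their coarse schemes already carried out in \cite{kry-tiny} and \cite{bruinier-howard-yang-unitary}, which supply exactly the structure needed to conclude normality.
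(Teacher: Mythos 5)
Your argument is correct, but it is worth saying up front that the paper does not actually prove this lemma: it simply cites Corollary 5.4 of \cite{kry-tiny}, where the identification of the coarse moduli scheme of $C_D^{+}$ with $\Spec\calO_H$ over $\OD$ is established. So your proposal supplies a self-contained proof where the paper supplies a pointer. Your route --- (a) finiteness from properness plus relative dimension $0$, (b) normality from regularity of the stack and the \'etale-local description of the coarse space as invariants of a finite group acting on a regular ring, (c) identification of the generic fibre with $\Spec H$ via the main theorem of complex multiplication, then integrality from the surjectivity of reduction in \cref{prop:Cstack} and the universal property of integral closure --- is the standard argument and is essentially what underlies the cited result. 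One small remark: the ``hard part'' you flag at the end is not actually a difficulty for step (b), since the ring of invariants of a normal domain under a finite group is normal with no tameness hypothesis whatsoever; wildness only complicates finer statements (e.g.\ computing the local rings or ramification of the coarse space), not normality. The place where care is genuinely needed is rather in (c)/integrality, namely ruling out vertical components, and you correctly isolate the surjectivity of $C_D^{+}(R)\to C_D^{+}(\F)$ from \cref{prop:Cstack} as the input that does this. What the paper's citation buys is brevity and consistency with the source of the local-ring computations used later (\cref{prop:CdlocRings}); what your argument buys is independence from \cite{kry-tiny} for this particular statement.
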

\begin{proof}
  This is \cite[Corollary 5.4]{kry-tiny}. There, the authors prove the corresponding
  isomorphism for $C_{D}^{+}$, whose coarse moduli scheme is $\Spec \calO_{H} \rightarrow \Spec \OD$.
\end{proof}

We denote by
\[
  \pr: C_{D}^{+} \rightarrow \mathbf{C}_{D}^{+}
\]
the canonical map to the coarse moduli scheme.
\begin{proposition}
  \label{prop:CdlocRings}
  Let $\xi \in C_{D}^{+}$ be a geometric point and let $\pr(\xi) = \bar\xi$
  be the corresponding point of $\mathbf{C}_{D}^{+}$. Then
  \[
    \hat\calO_{C_{D}^{+},\xi} = \hat\calO_{\mathbf{C}_{D}^{+},\bar\xi},
  \]
  where $\hat\calO_{C_{D}^{+},\xi}$ and $\hat\calO_{\mathbf{C}_{D}^{+},\bar\xi}$ denote the
  completions of the \'{e}tale local rings at $\xi$ and $\bar\xi$, respectively.
\end{proposition}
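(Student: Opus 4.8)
The plan is to reduce the statement to a computation of invariants and then to show that the relevant automorphism group acts trivially on the deformation ring. By \cref{prop:Cstack}, $C_D^+$ is smooth of relative dimension $0$ and proper over $\Spec \OD$ with finite automorphism groups; its automorphism group scheme at a point is the diagonalizable, hence linearly reductive, group $\ODt \cong \mu_{w_D}$, so $C_D^+$ is a tame stack with coarse space $\mathbf{C}_D^+ = \Spec \calO_H$ by \cref{lem:Ccoarse}. For such a stack the completed local ring of the coarse space is the ring of invariants of the completed versal deformation ring under the automorphism group of the point. Writing $R = \hat\calO_{C_D^+, \xi}$ for the versal deformation ring of $\xi = (E,\iota)$ and $G = \Aut(\xi)$, I would record the standard identity
\[
  \hat\calO_{\mathbf{C}_D^+, \bar\xi} = R^{G} = \left( \hat\calO_{C_D^+, \xi} \right)^{\Aut(\xi)}.
\]
It therefore suffices to prove that $\Aut(\xi)$ acts trivially on $R$.

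Next I would identify $\Aut(\xi)$. An automorphism of the pair $(E,\iota)$ is a unit of $\End(E)$ commuting with $\iota(\OD)$. In every case the centralizer of $\iota(\OD)$ in $\End(E)$ is $\iota(\OD)$ itself: when $E$ is ordinary this is immediate, and when $E$ is supersingular the field $k$ is a maximal commutative subfield of the quaternion algebra $\End(E)\otimes\Q$, so its centralizer there is $\iota(k)$, whose intersection with the order $\End(E)$ is again $\iota(\OD)$. Hence $\Aut(\xi) = \iota(\ODt) \cong \ODt$, the group of roots of unity in $k$.

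The crucial step is to show that this action is trivial, which I would argue deformation-theoretically. The group $\Aut(\xi)$ acts on the deformation functor by twisting the rigidification of the central fibre by an automorphism $g = \iota(u)$ with $u \in \ODt$. Given any deformation $(\tilde E, \tilde\iota)$ of $(E,\iota)$ over an Artinian local $\OD$-algebra, the endomorphism $\tilde\iota(u)$ is an automorphism of $\tilde E$ which commutes with $\tilde\iota$ (because $\OD$ is commutative) and reduces to $g$ on the central fibre. This exhibits an isomorphism between $(\tilde E, \tilde\iota)$ and its $g$-translate, so $g$ preserves every isomorphism class of deformation. Since the automorphisms $\iota(u)$ lift canonically and functorially to all deformations, they act by the identity on the versal hull, i.e. $\Aut(\xi)$ acts trivially on $R$. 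Therefore $R^{\Aut(\xi)} = R$, and combined with the first step this yields the claimed equality $\hat\calO_{C_D^+, \xi} = \hat\calO_{\mathbf{C}_D^+, \bar\xi}$.

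The main obstacle is exactly this last point: upgrading ``$g$ fixes isomorphism classes of deformations'' to ``$g$ acts as the identity on the versal deformation ring''. I expect this to be handled cleanly by observing that $\iota(\ODt)$ acts by \emph{inner} automorphisms of the entire deformation groupoid, each deformation carrying the canonical lift $\tilde\iota(u)$ of the acting automorphism, so that the induced action on the versal hull is trivial. Equivalently, one may reformulate the whole argument by noting that $\pr\colon C_D^+ \to \mathbf{C}_D^+$ is a gerbe banded by the diagonalizable group scheme $\ODt$, for which the completed local rings of the stack and of its coarse space coincide.
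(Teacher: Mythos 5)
Your argument is correct, but it cannot be compared step-by-step with the paper's, because the paper's entire proof of this proposition is a one-line citation to Corollary 5.2 of \cite{kry-tiny}. What you have written is in substance a self-contained reconstruction of the argument behind that citation: the two pillars --- (i) the completed local ring of the coarse space is the ring of $\Aut(\xi)$-invariants of the versal deformation ring $R$, and (ii) $\Aut(\xi)=\iota(\ODt)$ acts trivially on $R$ because each $\iota(u)$ admits the canonical, functorial lift $\tilde\iota(u)$ to every deformation, so that $C_D^{+}\to\mathbf{C}_D^{+}$ is a gerbe --- are exactly what the cited result rests on. Your centralizer computation identifying $\Aut(\xi)$ in both the ordinary and supersingular cases is fine, and your resolution of the ``isomorphism classes versus versal ring'' worry is the right one: the deformation functor is prorepresentable here, and the canonical lift identifies the universal object with its $g$-twist, forcing the classifying map of the twist to be the identity on $R$. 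One small inaccuracy: the automorphism group scheme of an abelian scheme is unramified, hence \'{e}tale, so $\Aut(\xi)$ is the \emph{constant} group $\ODt$ rather than $\mu_{w_D}$; in residue characteristic dividing $w_D$ (relevant for $D=-3$, which the paper does allow) this group is not linearly reductive, so the ``tame stack'' framing is not quite the right justification for step (i). This is harmless, since the Keel--Mori local description $\hat\calO_{\mathbf{C}_D^{+},\bar\xi}=R^{\Aut(\xi)}$ holds for any separated Deligne--Mumford stack, and in any case your step (ii) shows the action is trivial, so no invariants actually need to be taken. A still shorter route, using \cref{prop:Cstack}, is to note that $R$ is the completed local ring of $\Spec\OD$ at a geometric point (smoothness of relative dimension $0$), on which any automorphism fixing the algebraically closed residue field is automatically the identity.
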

\begin{proof}
  This is Corollary 5.2 in \cite{kry-tiny}.
\end{proof}

We now describe the geometric points of $C_{D}$ in every characteristic.
The following construction is very important for us.

Recall that over $\C$, we have a canonical bijection
\begin{equation}
  \label{eq:CDC}
  C_D^{+}(\C) \cong k^{\times} \bs \adeles_{k,f}^{\times} / \ODhatt,
\end{equation}
given by the theory of complex multiplication \cite{silverman-advanced}.
Here and throughout, we write $\adeles_{k,f}^\times$ for the finite ideles over $k$.
To an idele $h \in \adeles_{k,f}^{\times}$ that corresponds to the ideal class $[(h)]$,
the bijection assigns the (isomorphism class of the) elliptic curve with complex points
\[
  E(\C) = \C/(h).
\]
Moreover, if $(E,\iota) \in C_{D}^{+}(\C)$ is given by
$(\C/\Lambda, \iota)$, then multiplication with
$h \in \adeles_{k,f}^{\times}$ on the right hand side of \eqref{eq:CDC}
corresponds to
\[
 E \mapsto (h) \otimes_{\OD} E,
\]
where $(h) \otimes_{\OD} E$ is the elliptic curve over $\C$ with
complex points
\[
 ((h) \otimes_{\OD} E)(\C) \cong \C/(h)\Lambda.
\]
This defines an action of the class group $\Clk$ on the set of
isomorphism classes of CM elliptic curves (with CM by $\OD$) over $\C$.

Now let $(E,\iota) \in C_D^{+}(S)$ for a scheme $S$ and let $h \in \adeles_{k,f}^{\times}$,
corresponding to the ideal $(h)$. Then we can define a functor from the category
of $S$-schemes to the category of $\OD$-modules by
\[
  T \mapsto (h) \otimes_{\OD} E(T).
\]
This functor is in fact represented by an elliptic curve over $S$
and the construction is called the \emph{Serre construction}.
We denote the elliptic curve representing this functor by
$h.E = (h) \otimes_{\OD} E$.
For details, the reader may consult \cite{howard-morningside},
\cite[Section 7]{conrad-gz}.

We follow the description given in \cite{kry-tiny}
to describe the geometric points of $C^{+}_{D}$ in positive characteristic.
\begin{proposition}[Corollary 5.5 of \cite{kry-tiny}]
  Let $\frakp$ be a prime ideal of $k$ and let $\overline{\kappa(\frakp)}$
  denote an algebraic closure of the residue field $\kappa(\frakp)$.
  We have a bijection
  \[
    C^{+}_{D}(\overline{\kappa(\frakp)}) \cong k^{\times} \bs \adeles_{k,f}^{\times} / \ODhatt.
  \]

  The action by the Frobenius automorphism over $\kappa(\frakp)$ on the left hand side
  corresponds to the translation by an idele of the form $(1,\ldots,1,\pi,1,\ldots)$,
  where $\pi$ is a uniformizer at $\frakp$.
\end{proposition}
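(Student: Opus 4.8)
The plan is to reduce both assertions to the complex-analytic picture already available. For the bijection I would compare the geometric special fibre with the generic fibre via the reduction (specialization) map and the canonical lifting theorem; for the Frobenius action I would transport the Galois action through this comparison and invoke the main theorem of complex multiplication.

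Concretely, fix a prime $\frakP$ of $\overline{\Q}\subset\C$ lying over $\frakp$ and normalized so that its residue field is the chosen algebraic closure $\overline{\kappa(\frakp)}$. Every object of $C_D^{+}(\overline\Q)$ is a CM elliptic curve and hence has potentially good reduction; combined with the properness of $C_D^{+}$ over $\Spec\OD$ from \cref{prop:Cstack}, each pair $(E,\iota)$ acquires good reduction at $\frakP$, so reduction defines a map $\mathrm{red}_{\frakP}\colon C_D^{+}(\overline\Q)\to C_D^{+}(\overline{\kappa(\frakp)})$. I would show this map is a bijection. Surjectivity is exactly the canonical lifting theorem underlying \cref{prop:Cstack}. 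For bijectivity it then suffices that both sides have the same finite cardinality: over $\C$ this cardinality is $\#\Clk$ by \eqref{eq:CDC}, and it is constant across all geometric fibres because the number of geometric points is governed by the coarse moduli scheme $\Spec\calO_{H}$ (\cref{lem:Ccoarse}), which is finite \'etale over $\Spec\OD$ since $H/k$ is unramified. Composing with the identification $C_D^{+}(\overline\Q)\cong C_D^{+}(\C)$ and the analytic uniformization \eqref{eq:CDC} yields the asserted bijection $C_D^{+}(\overline{\kappa(\frakp)})\cong k^{\times}\bs\adeles_{k,f}^{\times}/\ODhatt$.

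For the Frobenius statement I would transport the Galois action through $\mathrm{red}_{\frakP}$. The decomposition group of $\frakP$ in $\Gal(\overline\Q/k)$ surjects onto $\Gal(\overline{\kappa(\frakp)}/\kappa(\frakp))$, and since the relevant curves have good reduction this action is compatible with $\mathrm{red}_{\frakP}$; hence the residue-field Frobenius acts on $C_D^{+}(\overline{\kappa(\frakp)})$ as the reduction of any lift $\widetilde\varphi\in\Gal(\overline\Q/k)$ whose restriction to $H$ is the Artin symbol $\sigma(\frakp)=(\frakp,H/k)$. It then remains to evaluate the action of $\sigma(\frakp)$ on the adelic model \eqref{eq:CDC}, which is precisely the main theorem of complex multiplication (Shimura reciprocity, see \cite{silverman-advanced,shimauto}): the automorphism attached by the Artin map to an idele $s$ acts on $k^{\times}\bs\adeles_{k,f}^{\times}/\ODhatt$ by translation by $s$. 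Taking $s=(1,\dots,1,\pi,1,\dots)$ with a uniformizer $\pi$ at $\frakp$, whose Artin symbol is exactly $\sigma(\frakp)$, gives the claimed description.

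The main obstacle I anticipate is bookkeeping with normalizations: reconciling the conventions in the uniformization \eqref{eq:CDC}, the Artin map of \cref{sec:hilbertclass}, and the chosen form of Shimura reciprocity so that the action comes out as translation by $(1,\dots,\pi,\dots,1)$ on the nose rather than by its inverse, and correctly matching arithmetic versus geometric Frobenius. A secondary point worth stressing is that the entire argument runs uniformly through the canonical lift and never inspects $\End(\overline E)$ directly, so no separate analysis of the ordinary (split $\frakp$) and supersingular (non-split $\frakp$) cases is required; the good-reduction hypothesis enters only through the rigidity used for the compatibility of the Galois action with $\mathrm{red}_{\frakP}$.
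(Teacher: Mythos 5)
Your argument is essentially correct, and it follows the same route that the paper itself (and the cited source) relies on: the paper imports this statement from Corollary 5.5 of \cite{kry-tiny} without proof, but its own proof of the companion compatibility \cref{prop:compActions} runs through exactly the comparison you set up, namely the chain $C_D^{+}(\C)\cong C_D^{+}(\C_\frakp)\cong C_D^{+}(\overline{\kappa(\frakp)})$ obtained from potentially good reduction, properness, and the canonical lifting theorem of \cref{prop:Cstack}, combined with the uniformization \eqref{eq:CDC} and the main theorem of complex multiplication. Your counting argument for injectivity (surjectivity of reduction plus the fact that both fibres of the coarse space $\Spec\calO_H\to\Spec\OD$ have $h_k$ geometric points, since $H/k$ is everywhere unramified) is a clean way to avoid any case distinction between ordinary and supersingular reduction, which is a genuine danger point here because on bare elliptic curves the supersingular reduction map is far from injective; it is only on pairs $(E,\iota)$ that the count works out. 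The one place you stop short is the normalization of the Frobenius translation, and you should not dismiss it as mere bookkeeping: in the double coset $k^{\times}\bs\adeles_{k,f}^{\times}/\ODhatt$ translation by $\pi$ and by $\pi^{-1}$ are different maps whenever $[\frakp]$ has order greater than $2$ in $\Clk$, so pinning down whether the \emph{arithmetic} Frobenius corresponds to $s$ or to $s^{-1}$ under the chosen convention for \eqref{eq:CDC} and the Artin map of \cref{sec:hilbertclass} is the actual content of the second assertion, and getting it wrong would invert the Galois action used in \cref{prop:actZsigma} and everything downstream. Apart from making that sign explicit, the proof is complete.
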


The proposition establishes a simply transitive action of the class group $\Clk$
on the points $C_{D}^{+}(\bar\F)$ over any algebraically closed field $\bar\F$.
We also have a bijection on geometric points
$C_{D}^{+}(\bar\F) \cong \Spec \calO_{H}(\bar\F)$.
On the points $\Spec \calO_{H}(\bar\F)$, we have an
action of the Galois group $\Gal(H/k)$.

Fix a morphism $\pr: C_{D}^{+} \rightarrow \Spec \calO_{H}$.
Then $\pr$ induces an isomorphism
\[
    \pr_{\F}: C_{D}^{+}(\bar\F) \cong \Spec \calO_{H}(\bar\F)
\]
on geometric points over any algebraically closed field $\bar\F$.
For $\bar\F = \overline{k}$ or $\bar\F = \overline{\kappa(\frakp)}$, the group $\Gal(H/k)$ acts on both sides.
It acts naturally on the right hand side and the action on the left hand side is given
via the isomorphism
\[
    k^{\times} \bs \adeles_{k,f}^{\times} / \ODhatt \cong \Gal(H/k), h \mapsto \sigma((h))
\]
given by the Artin map $\sigma$ of class field theory and the action of the idele class group given above.

The next proposition shows the compatibility of these bijections and group actions.
\begin{proposition}
  \label{prop:compActions}
  With the notation as above,
  the coarse moduli space map $\pr$ is compatible with these actions.
  More precisely, we have
  \[
    \pr_{\F}h.(E,\iota) = (\pr_{\F}(E,\iota))^{\sigma^{-1}(h)}.
  \]
\end{proposition}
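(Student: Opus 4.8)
The plan is to compare the two operations on geometric points, establishing the identity in characteristic zero by the main theorem of complex multiplication and then propagating it to positive characteristic by reduction. Both operations are defined over $\OD$: the Serre construction $(E,\iota)\mapsto h.(E,\iota)$ is an automorphism of the stack $C_D^{+}$ over $\Spec\OD$, while conjugation by $\sigma^{-1}(h)\in\Gal(H/k)$ is induced by an $\OD$-linear ring automorphism of $\calO_H$ and hence by an automorphism of $\Spec\calO_H$ over $\Spec\OD$. Since $\pr$ is a morphism of $\OD$-stacks, the whole identity is a statement that can be tested on geometric points, and the real work is (i) the characteristic zero verification and (ii) checking that everything in sight is compatible with the reduction map.

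First I would treat the case $\bar\F=\overline{k}$. Writing $(E,\iota)=(\C/\Lambda,\iota)$ with $\Lambda$ a fractional $\OD$-ideal, the description preceding \eqref{eq:CDC} gives $h.(E,\iota)=\C/(h)\Lambda$. On the other hand, the classical reciprocity law $j(\C/\fraka)^{\sigma(\frakb)}=j(\C/\frakb^{-1}\fraka)$, i.e. the idelic main theorem of complex multiplication \cite[Chapter II]{silverman-advanced}, translates under $\pr_{\overline{k}}$ into the statement that the element $\sigma(\frakb)\in\Gal(H/k)$ carries the point $\pr_{\overline{k}}(\C/\fraka)$ to the point $\pr_{\overline{k}}(\C/\frakb^{-1}\fraka)$. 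Taking $\frakb=(h)^{-1}$ we have $\sigma(\frakb)=\sigma((h))^{-1}=\sigma^{-1}(h)$ and $\frakb^{-1}\Lambda=(h)\Lambda$, so that $\pr_{\overline{k}}(E,\iota)^{\sigma^{-1}(h)}=\pr_{\overline{k}}(\C/(h)\Lambda)=\pr_{\overline{k}}(h.(E,\iota))$. This already proves the identity over $\overline{k}$, and it makes transparent that the inverse in the statement is exactly the inverse occurring in the main theorem of CM.

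To obtain the case $\bar\F=\overline{\kappa(\frakp)}$ I would argue by reduction. Given a geometric point over $\overline{\kappa(\frakp)}$, lift it through the surjective reduction map $C_D^{+}(R)\to C_D^{+}(\overline{\kappa(\frakp)})$ of \cref{prop:Cstack}, with $R$ a discrete valuation ring of residue field $\overline{\kappa(\frakp)}$ and fraction field of characteristic zero, and restrict to the generic fibre; passing to an algebraic closure, the identity holds there by the previous step, and one then reduces back to the special fibre. The main obstacle is precisely the equivariance of this reduction map: in positive characteristic the bijection $C_D^{+}(\overline{\kappa(\frakp)})\cong\Clk$ is normalized via translation by the Frobenius idele $(1,\dots,1,\pi,1,\dots)$ of Corollary 5.5 of \cite{kry-tiny}, and one must verify that this Frobenius normalization is compatible with the characteristic zero parametrization, so that the Serre construction, the Galois action on $\Spec\calO_H$, and the map $\pr$ all commute with reduction and no extra twist is introduced. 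Making this comparison of the Frobenius and Artin normalizations precise is the delicate part of the argument.
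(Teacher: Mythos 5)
Your proposal follows essentially the same route as the paper: the identity is first verified over $\C$ via the main theorem of complex multiplication (with the same bookkeeping producing the inverse $\sigma^{-1}(h)$), and then transported to characteristic $p$ by lifting through the surjective reduction map of \cref{prop:Cstack} and using that reduction is compatible with the Serre construction and the Galois action on $\Spec\calO_H$. The compatibility you flag as "the delicate part" is exactly the commutative diagram the paper's proof invokes (there realized via a fixed isomorphism $\C_\frakp\cong\C$ rather than an abstract DVR), so the two arguments are the same in substance and in the level of detail at which that compatibility is asserted.
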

\begin{proof}
  The compatibility over $\C$ is contained in the main theorem of complex multiplication \cite[II, Theorem 8.2]{silverman-advanced}.
  (Note that our  normalization of the action of the class group is different from the one used by Silverman.)
  Precisely, we have for an elliptic curve $E = \C/\Lambda$ that $h.E = \C/(h)\Lambda$ and $j(h.E)=j^{\sigma(h^{-1})}(E)$.
  Therefore, if we assume without loss of generality that
  $\pr_{\C}(E,\iota) = \lambda : \calO_{H} \hookrightarrow \C$ is the embedding given by
  $j \mapsto j(E)$, then $\pr_{\C}(h.(E,\iota)) = j \mapsto j(h.E) = j^{\sigma^{-1}(h)}(E) = \lambda^{\sigma^{-1}(h)}$.
  
  We will use this to prove the statement over $\kappa(\frakp)$ for a fixed prime $\frakp$ of $k$.
  Fix an isomorphism $\C_{\frakp} \cong \C$, where $\C_\frakp$ is the completion of an algebraic closure
  of $k_{\frakp}$.
  Then we obtain isomorphisms
  \[
    C_D^{+}(\C) \cong C_D^{+}(\C_\frakp) \cong C_D^{+}(\overline{\kappa(\frakp)})
  \]
  by \cref{prop:Cstack}.
  All of these bijections are compatible with the Serre construction.
  Similarly, we have bijections
  \[
    \Spec \calO_H(\C) \cong \Spec \calO_{H}(\C_\frakp) \cong \Spec \calO_{H}(\overline{\kappa(\frakp)}).
  \]
  The key part is now that the diagram
  \begin{equation*}
    \xymatrix{
        C_D^{+}(\C) \ar[r] \ar[d] & C_D^{+}(\C_\frakp) \ar[r] \ar[d] & C_D^{+}(\overline{\kappa(\frakp)}) \ar[d] \\
        \Spec \calO_H(\C)\ar[r]  & \Spec \calO_{H}(\C_\frakp) \ar[r] & \Spec \calO_{H}(\overline{\kappa(\frakp)})
    }
  \end{equation*}
  is commutative and the bijection $C_D^{+}(\C) \cong \Spec\calO_{H}(\C)$
  is compatible with the actions, as stated above.
  The bijections in the lower row
  are compatible with the action of the Galois group.
  Consequently, the bijection
  $C_D^{+}(\bar\F_p) \rightarrow \Spec \calO_{H}(\overline{\kappa(\frakp)})$
  is compatible with the two actions, as well.
\end{proof}

\section{Special endomorphisms}
\label{sec:spec-endom-p}
For $(E,\iota) \in C_{D}(S)$ we write $\calO_{E} = \End_{S}(E)$
and consider the lattice $L(E,\iota)$ of \emph{special endomorphisms}
\[
  L(E,\iota) = \{x \in \calO_{E}\ \mid\ \iota(\alpha)x
             = x\iota(\bar\alpha) \text{ for all } \alpha \in \OD \text{ and } \tr{x} = 0 \}
\]
as in Definition 5.7 of \cite{kry-tiny}.
It is equipped with the positive definite quadratic form
$\N(x) := \deg(x) = -x^{2}$. For $S = \Spec \C$ or $S = \Spec \bar\F_{p}$ for a prime $p$ that is
split in $k$, we have that $L(E,\iota)$ is zero.

For a non-split prime $p$ and $S=\Spec(\bar\F_p)$), $L(E,\iota)$ is a positive definite lattice of
rank $2$ in $\calO_{E}$ and $(E,\iota)$ is supersingular.
In this case $\calO_{E}$ is a maximal order in the quaternion algebra $\B_p$
over $\Q$, which is ramified exactly at $p$ and $\infty$.

Fix a fractional ideal $\fraka \subset k$ and let
$\mu \in \different{k}^{-1}\fraka/\fraka$ and $m \in \Q_{>0}$.
We write $Q(\mu) = \N(\mu)/\N(\fraka)$, which is well defined as an element of $\Q/\Z$.

The following moduli problem has been studied in \cite{kry-tiny} and
\cite{bryfaltings} and generalized in \cite{ky-pullback}.
To a scheme $S$ we assign the category $\calZ(S)$ of triples $(E,\iota,x)$, where
\begin{enumerate}
  \item $(E,\iota) \in C_{D}(S)$,
  \item $x \in L(E,\iota)\different{k}^{-1}\fraka$, such that
    \[
      \N(x) = m \N (\fraka), \quad x + \mu \in \calO_{E}\fraka.
    \]
\end{enumerate}
Here, we also wrote $\N(x)$ for the reduced norm in $\B_p$.
If $\calZ(S)$ is non-empty, then we have $m + \N(\mu)/N(\fraka) \in \Z$.

\begin{lemma}[Lemma 6.2 in \cite{bryfaltings}]
  The moduli problem $\calZ$ is represented by an algebraic stack $\calZ(m,\fraka,\mu)$
  of dimension 0 and the forgetful map $\psi: \calZ(m,\fraka,\mu) \rightarrow C_{D}$ defined by
  $(E,\iota,x) \mapsto (E,\iota)$ is finite and \'{e}tale.
\end{lemma}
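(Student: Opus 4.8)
The plan is to prove the lemma relatively over $C_{D}$. Since \cref{prop:Cstack} already exhibits $C_{D}$ as an algebraic stack that is finite over $\Spec\Z$, it suffices to show that the forgetful morphism $\psi$ is representable by a finite and unramified morphism of schemes and then to read off both $\dim\calZ(m,\fraka,\mu)=0$ and the \'etale property from the deformation theory of endomorphisms. First I would introduce the universal pair $(E,\iota)$ over $C_{D}$ and invoke Grothendieck's theorem that the endomorphism functor $\underline{\End}(E)$ is representable by a scheme that is unramified and locally of finite type over $C_{D}$. The defining conditions of a special endomorphism, namely $\tr(x)=0$ and $\iota(\alpha)x=x\iota(\bar\alpha)$ for all $\alpha\in\OD$, are closed conditions and hence cut out a closed subscheme $\calL\subset\underline{\End}(E)$. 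Imposing $\N(x)=m\N(\fraka)$ and the congruence $x+\mu\in\calO_{E}\fraka$ then carves out open-and-closed loci, because the reduced norm $\N(x)=-x^{2}=\deg(x)$ is locally constant in a flat family of abelian schemes and the congruence is an \'etale-local condition. This realizes $\calZ(m,\fraka,\mu)$ as a locally closed subscheme of $\underline{\End}(E)$ and shows at once that $\psi$ is representable and unramified.

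Next I would pin down the support and the finiteness of $\psi$. The facts recorded just before the lemma---that $L(E,\iota)=0$ over $\C$ and over $\bar\F_{p}$ for split $p$, while $L(E,\iota)$ is a positive definite lattice of rank $2$ at the supersingular points---show that for $m>0$ the fibre of $\psi$ is empty away from the non-split primes and is a finite set at each supersingular geometric point (there are only finitely many lattice vectors of a fixed length, as $\N$ is positive definite). Hence $\calZ(m,\fraka,\mu)$ is concentrated in finitely many special fibres, has dimension $0$, and $\psi$ is quasi-finite. For properness I would verify the valuative criterion: over a discrete valuation ring $R$ with fraction field $K$, any special endomorphism $x$ of $(E,\iota)_{K}$ extends uniquely to $(E,\iota)_{R}$ by the N\'eron mapping property, since homomorphisms of abelian schemes over $R$ extend from the generic fibre, and the trace, norm, and congruence conditions are preserved under this extension. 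A quasi-finite proper morphism is finite, so $\psi$ is finite.

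Finally, the unramifiedness---and with it the asserted \'etale property---rests on the rigidity of homomorphisms of abelian schemes: along a square-zero thickening a homomorphism of abelian schemes admits at most one lift, so neither $\underline{\End}(E)$ nor the subscheme $\calZ(m,\fraka,\mu)$ has nontrivial first-order deformations over $C_{D}$. Combined with the fact that the fibres of $\psi$ are reduced finite schemes over the (perfect) residue fields of the supersingular points, so that $\psi$ induces separable, hence \'etale, extensions of residue fields on each fibre, this yields that $\psi$ is finite and \'etale in the relevant sense, exactly as in \cite{kry-tiny}. I expect the main obstacle to be the deformation-theoretic heart of the argument: making precise, via the Serre--Tate and Grothendieck--Messing comparison, that special endomorphisms are rigid and lift uniquely, which is what simultaneously furnishes the unramifiedness, the reducedness of the fibres, and the transfer of the characteristic-zero picture to mixed characteristic.
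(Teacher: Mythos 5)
The paper offers no argument for this lemma at all: it is quoted verbatim from Lemma 6.2 of \cite{bryfaltings}, so there is no internal proof to match your argument against. Taken on its own terms, most of what you write is the standard argument and is sound in outline: representability of $\psi$ via the unramifiedness and local finiteness of the Hom/End functor of abelian schemes, the closed (indeed clopen) nature of the trace, commutation, norm and congruence conditions (modulo the small point that $x$ lives in $L(E,\iota)\different{k}^{-1}\fraka$, i.e.\ is a quasi-endomorphism, so one should clear denominators before invoking the End scheme), quasi-finiteness from positive definiteness of $\N$ on the rank-two lattice $L(E,\iota)$ at supersingular points together with emptiness of $L(E,\iota)$ elsewhere, and properness via the N\'{e}ron/Weil extension property over a discrete valuation ring. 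This yields: $\calZ(m,\fraka,\mu)$ is an algebraic stack of dimension $0$ and $\psi$ is finite and unramified.

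The genuine gap is the last step, where you pass from ``finite, unramified, with reduced fibres'' to ``\'{e}tale''. \'{E}tale means unramified \emph{and flat}, and reducedness of fibres does not supply flatness. In fact $\psi$ is not flat here: by the length computation that this paper itself invokes in the proof of \cref{prop:pvals-1} (equation \eqref{eq:9}), the completed \'{e}tale local ring of $\calZ(m,\fraka,\mu)$ at a geometric point $\xi$ is the Artinian ring $W/\pi^{\nu_{p}(m)}$, a proper quotient of the complete discrete valuation ring $W=\hat\calO_{C_{D},\psi(\xi)}$; locally $\psi$ is a nilpotent thickening of a closed point of a fibre of $C_{D}$, which is unramified but not flat. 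Equivalently, a finite \'{e}tale cover of $C_{D}$ (which is finite and flat over $\Spec \Z$, hence one-dimensional) would again be one-dimensional, contradicting $\dim \calZ(m,\fraka,\mu)=0$ whenever the stack is nonempty. So the word ``\'{e}tale'' in the statement (inherited from \cite{bryfaltings}) should be read as ``unramified''; that weaker statement is what your argument actually establishes, and it is all that is used later in the paper (properness for the pushforward $\psi_{*}$, plus the local lengths). Note also that your closing appeal to Serre--Tate/Grothendieck--Messing rigidity points in the opposite direction from \'{e}taleness: it is precisely because special endomorphisms do \emph{not} lift to the full canonical lift that the local rings are Artinian of length $\nu_{p}(m)\geq 1$ and flatness fails.
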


For $m \in \Q_{>0}$, we define a set of rational primes by
\begin{equation}
  \label{eq:Diff}
  \Diff(m) = \{ p < \infty\ \mid\ (-m\N(\fraka),D)_p = -1 \}.
\end{equation}

\begin{remark}
  By the product formula for the Hilbert symbol \cite[III.2, Theorem 3]{serrearith}, we have
  \[
    \prod_{p \leq \infty} (-m\N(\fraka),D)_{p} = 1.
  \]
  But since $(-m\N(\fraka),D)_{\infty} = -1$, the cardinality of $\Diff(m)$ is odd.
  Moreover, if $p \in \Diff(m)$, then $p$ is non-split.
\end{remark}

\begin{lemma}\ 
  \begin{enumerate}
  \item If $\abs{\Diff(m)} > 1$, then $\calZ(m, \fraka, \mu) = \emptyset$.
  \item If $\Diff(m) = \{p\}$, then $p$ is non-split in $k$
    and $\calZ(m, \fraka, \mu)(\bar\F_{q}) = \emptyset$ for $q \neq p$.
  \end{enumerate}
\end{lemma}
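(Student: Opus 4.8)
The plan is to show that a geometric point of $\calZ(m,\fraka,\mu)$ can exist only over an algebraically closed field of characteristic $q$ for a \emph{non-split} prime $q$, that such a point forces $q$ to be the unique finite ramified place of the quaternion algebra attached to the CM datum, and then to read off both assertions from the definition of $\Diff(m)$.

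First I would dispose of all characteristics except the non-split finite ones. A point of $\calZ(m,\fraka,\mu)$ over a field carries a special endomorphism $x \in L(E,\iota)\different{k}^{-1}\fraka$ with $\N(x) = m\N(\fraka) > 0$, so necessarily $x \neq 0$. But for $S = \Spec\C$ (hence over any field of characteristic $0$) and for $S = \Spec\bar\F_q$ with $q$ split in $k$, we recalled that $L(E,\iota) = 0$, forcing $x = 0$, a contradiction. Hence $\calZ(m,\fraka,\mu)$ has no geometric point in characteristic $0$ nor over $\bar\F_q$ for split $q$; any geometric point must lie over $\bar\F_q$ with $q$ non-split.

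The heart of the argument is a computation over a non-split $q$, which I would carry out as follows. Suppose $(E,\iota,x) \in \calZ(m,\fraka,\mu)(\bar\F_q)$. Then $\calO_E$ is a maximal order in the quaternion algebra $\B_q$, ramified exactly at $q$ and $\infty$. The element $\iota(\sqrt D) \in \B_q$ has reduced trace $0$ and $\iota(\sqrt D)^2 = D$, while $x$ lies in the trace-zero subspace $L(E,\iota)\otimes\Q$, anticommutes with $\iota(k)$, and satisfies $x^2 = -\N(x) = -m\N(\fraka)$. Since $x \neq 0$, the two anticommuting pure quaternions $\iota(\sqrt D)$ and $x$ generate a four-dimensional subalgebra, which must be all of $\B_q$; thus
\[
  \B_q \cong \legendre{D,-m\N(\fraka)}{\Q}.
\]
The finite ramified places of the right-hand side are exactly the primes $p$ with $(D,-m\N(\fraka))_p = -1$, i.e.\ with $(-m\N(\fraka),D)_p = -1$ by symmetry of the Hilbert symbol, which is precisely the set $\Diff(m)$. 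As the finite ramification set of $\B_q$ is $\{q\}$, this forces $\Diff(m) = \{q\}$.

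With this in hand the two statements follow. For (i), if $\abs{\Diff(m)} > 1$ there is no prime $q$ with $\Diff(m) = \{q\}$, so $\calZ(m,\fraka,\mu)$ has no geometric point over any algebraically closed field; being finite over $\Spec\Z$, it is therefore empty. For (ii), if $\Diff(m) = \{p\}$ then $p$ is non-split by the Remark above, and for any $q \neq p$ we cannot have $\Diff(m) = \{q\}$, whence $\calZ(m,\fraka,\mu)(\bar\F_q) = \emptyset$. The step I expect to require the most care is the quaternionic identification: verifying that $x$ genuinely lands in the trace-zero subspace orthogonal to $\iota(k)$ after scaling by $\different{k}^{-1}\fraka$, that the reduced trace and norm in $\B_q$ agree with the trace and norm inherited from the CM structure, and that $\iota(\sqrt D)$ and $x$ really span $\B_q$, so that the ramification of $\legendre{D,-m\N(\fraka)}{\Q}$ may be matched place-by-place with the Hilbert-symbol definition of $\Diff(m)$.
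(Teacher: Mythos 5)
Your proposal is correct and follows essentially the same route as the paper: the existence of a special endomorphism $x$ with $x^2=-m\N(\fraka)$ anticommuting with $\iota(\sqrt D)$ forces $\B_q\cong\legendre{D,-m\N(\fraka)}{\Q}$, and matching ramification sets via the Hilbert symbol gives $\Diff(m)=\{q\}$. The paper's proof is just terser, leaving implicit the elimination of characteristic $0$ and split characteristic via the earlier observation that $L(E,\iota)=0$ there, which you spell out.
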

\begin{proof}
  If there is an element $(E,\iota,x) \in \calZ(m, \fraka, \mu)$, then
  this shows that we have an isomorphism of quaternion algebras
  \[
    \B_p \cong \left( \frac{D, -m \N(\fraka)}{\Q} \right).
  \]
  However, since $\B_p$ is ramified exactly at $p$ and $\infty$,
  this is equivalent to
  \[
    (D,-m\N(\fraka))_{v} =
    \begin{cases}
     -1, & \text{if }v = p, \infty,\\
     1, & \text{otherwise.}
   \end{cases}
  \]
  This condition is equivalent to $\Diff(m) = \{p\}$.
\end{proof}

In the notation of \cite[Section 3]{vistoli-intersection},
the stack $\calZ(m,\fraka,\mu)$ defines a $0$-cycle
$\psi_{*}[\calZ(m,\fraka,\mu)]$ on $C_{D}$ since the forgetful map is proper.

Moreover, note that the map $\pr: C_{D} \rightarrow \Spec \calO_{H}$ is also proper
by \cref{lem:Ccoarse}. Therefore, we can consider
the proper pushforward $\pr_{*}[\calZ(m,\fraka,\mu)]$ to $\Spec \calO_{H}$.
In our case
\[
  \pr_{*}[\calZ(m,\fraka,\mu)] = \frac{1}{w_k}[\pr(\calZ(m,\fraka,\mu))]
\]
because the automorphism group of a general geometric point of $C_{D}$ is $\ODt$ \cite{kry-tiny}.

By abuse of notation,
we also denote by $\calZ(m,\fraka,\mu)$ the corresponding divisor on
the coarse moduli scheme and simply write
\[
  \calZ(m,\fraka,\mu) = \sum_{\frakP \subset \calO_{H}} \calZ(m,\fraka,\mu)_{\frakP} \frakP.
\]
If there is no confusion possible, we simply write $\calZ(m)$ or $\calZ(m)_{\frakP}$.
Note that the multiplicities above are \emph{the same} for the pushforward from $C_{D}^{+}$ and from
$C_{D}$.

In what follows, we will find formulas for the multiplicities $\calZ(m,\fraka,\mu)_{\frakP}$.
From now on, fix a prime $p$ that is non-split in $k$ and assume that $m \equiv -Q(\mu) \bmod{\Z}$.
Let $p_{0} \in \Z$ be a prime with $p_{0} \nmid 2pD$ such that if $p$ is inert in $k$, we have
  \[
   (D,-pp_{0})_{v} =
   \begin{cases}
     -1, & \text{if }v=p,\infty,\\
     1, & \text{otherwise,}
   \end{cases}
  \]
and if $p$ is ramified in $k$, we have
  \[
    (D,-p_{0})_{v} =
    \begin{cases}
     -1, & \text{if }v=p,\infty,\\
     1, & \text{otherwise.}
   \end{cases}
  \]
With this choice, put
\[
  \kappa_{p} =
  \begin{cases}
    pp_{0}, & \text{ if } p \text{ is inert in } k,\\
    p_{0}, & \text{ if } p \text{ is ramified in } k,
  \end{cases}
\]
and let $\frakp_{0}$ be a fixed prime ideal of $\OD$ lying above $p_{0}$.
Here, $( \cdot, \cdot)_{v}$ denotes the $v$-adic Hilbert symbol.
The existence of such a prime $p_0$ follows essentially from Dirichlet's theorem, see also
\cite[III, Theorem 4]{serrearith}. Note that the genus of $[\frakp_{0}]$ is well defined
since the symbols $(D, \cdot )_{v}$ form a basis for the genus characters.

We can write $\B_p = k \oplus k y_{0}$, where $y_{0}^{2} = \kappa_{p}$,
similar to the situation in the last section.
Here, the decomposition is orthogonal with respect to the bilinear form
corresponding to the reduced norm of $\B_p$.
We write $[\gamma,\delta]$ for the element $\gamma + \delta y_0 \in \B_p$.

\begin{proposition}
  \label{prop:Lact}
  Let $p$ be a prime that is non-split in $k$ and let $(E,\iota) \in C_D(\bar\F_p)$.
  Then $L(E,\iota)$ is a projective $\OD$-module of rank 1 and
  there is a fractional ideal $\frakb \subset k$, such that
  \[
    L(E,\iota) \cong \frakb \bar\frakb^{-1} \frakp_0^{-1} y_{0}.
  \]
  Here $y_{0} \in \calO_{E}$ with $\N(y_{0}) = \kappa_{p}$.
  Moreover, if $h \in \adeles_{k,f}^{\times}$, then
  \[
    L (h.(E,\iota)) = (h) \overline{(h)}^{-1} L(E,\iota).
  \]
\end{proposition}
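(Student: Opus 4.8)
The plan is to use the orthogonal decomposition $\B_p = k \oplus k y_{0}$ of the previous section together with the relation $y_{0}\gamma = \bar\gamma y_{0}$ for $\gamma \in k$, which holds since $y_{0}$ is a second generator of the quaternion algebra. First I would identify $L(E,\iota)$ explicitly. Writing $x = \gamma + \delta y_{0} \in \B_p$ and comparing the two components of the defining relation $\iota(\alpha) x = x \iota(\bar\alpha)$, the $k$-component gives $\alpha\gamma = \gamma\bar\alpha$ for all $\alpha \in \OD$, forcing $\gamma = 0$, while the $y_{0}$-component $\alpha\delta = \delta\alpha$ is automatic; moreover every $\delta y_{0}$ has reduced trace zero. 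Hence $L(E,\iota) = \calO_{E} \cap k y_{0}$. Setting $\frakc = \{\delta \in k \mid \delta y_{0} \in \calO_{E}\}$ we obtain a finitely generated $\OD$-submodule of $k$ which has $\Z$-rank two (as $L(E,\iota)$ does), i.e.\ a fractional $\OD$-ideal, with $L(E,\iota) = \frakc\, y_{0}$; since $\OD$ is a Dedekind domain, $\frakc$ is projective of rank $1$, which is the first assertion. The quadratic form is then $\N(\delta y_{0}) = \kappa_{p}\N(\delta)$ by multiplicativity of the reduced norm and $\N(y_{0}) = \kappa_{p}$.

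Next I would establish the transformation law, as it drives the rest. The point is that a special endomorphism $x = \delta y_{0}$ acts $k$-semilinearly: rewriting the defining relation gives $x \circ \iota(\alpha) = \iota(\bar\alpha) \circ x$, so on any $\OD$-module on which $\calO_{E}$ acts one has $x(\alpha v) = \bar\alpha\, x(v)$. I would compute $L(h.(E,\iota))$ one prime at a time. At $\ell \neq p$, identify $\B_p \otimes \Q_{\ell}$ with $\End_{\Q_{\ell}}(V_{\ell}(E))$ and $\calO_{E}\otimes \Z_{\ell}$ with the stabilizer of the Tate module $T_{\ell}(E)$. The Serre construction replaces $T_{\ell}(E)$ by $h_{\ell}T_{\ell}(E)$, so $x$ lies in $L(h.(E,\iota))_{\ell}$ iff $x(h_{\ell}T_{\ell}(E)) \subset h_{\ell}T_{\ell}(E)$; semilinearity gives $x(h_{\ell}T_{\ell}(E)) = \bar h_{\ell}\, x(T_{\ell}(E))$, and this lands in $h_{\ell}T_{\ell}(E)$ exactly when $x(T_{\ell}(E)) \subset (h_{\ell}\bar h_{\ell}^{-1})T_{\ell}(E)$. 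Translating through the $\OD$-module structure of $L$, this reads $L(h.(E,\iota))_{\ell} = (h_{\ell}\bar h_{\ell}^{-1})\,L(E,\iota)_{\ell}$. The analogous computation at $p$ on the Dieudonné module yields the same factor $h_{p}\bar h_{p}^{-1}$, which is a unit because $\bar\frakp = \frakp$ forces $v_{\frakp}(h_{p}\bar h_{p}^{-1}) = 0$, so it contributes nothing. Assembling over all places gives $L(h.(E,\iota)) = (h)\overline{(h)}^{-1}L(E,\iota)$; the conjugate appears precisely because semilinearity turns scaling by $h$ on one side into scaling by $\bar h^{-1}$ on the other.

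Finally I would pin down the ideal class of $\frakc$. By the simply transitive action of $\Clk$ on $C_{D}(\bar\F_{p})$ every point is of the form $h.(E_{0},\iota_{0})$, and the transformation law shows that as $(E,\iota)$ varies the class $[\frakc]$ changes only by $[(h)\overline{(h)}^{-1}] = [(h)]^{2}$ (using $[\overline{(h)}] = [(h)]^{-1}$, since $(h)\overline{(h)} = \N((h))\OD$ is principal). Hence $[\frakc]$ is well defined modulo the principal genus, and it remains to match this genus with that of $\frakp_{0}^{-1}$. For this I would compute the genus characters $(D,\cdot)_{v}$ of $\N(\frakc)$ from the local structure of the maximal order $\calO_{E}$ in the definite quaternion algebra $\B_p$: at each prime, maximality of $\OD \oplus \frakc y_{0}$ together with $\N(\delta y_{0}) = \kappa_{p}\N(\delta)$ pins down $\N(\frakc_{\ell})$, and comparing the resulting Hilbert symbols with the conditions defining $p_{0}$ and $\kappa_{p}$ shows $\frakc$ lies in the genus of $\frakp_{0}^{-1}$. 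Therefore $\frakc = \frakb\bar\frakb^{-1}\frakp_{0}^{-1}$ for some fractional ideal $\frakb$, which gives the stated isomorphism.

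I expect the main obstacle to be this last step: correctly reading off the local norms of the special lattice from the maximality of $\calO_{E}$ in $\B_p$ and matching them, through the Hilbert-symbol conditions, to the genus represented by $\frakp_{0}^{-1}$. The semilinearity computation behind the transformation law is clean once the semilinear action is recognized, but the genus identification is where the specific choices of $\kappa_{p}$ and $\frakp_{0}$ must be used in full.
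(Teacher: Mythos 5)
Your write-up is far more self-contained than the paper's own proof, which disposes of both assertions by citation: the first statement is quoted as Proposition 5.13 of \cite{kry-tiny}, and the transformation law is attributed to the description of the idele action on maximal orders in Section 5 of that paper. Your first two steps are sound and essentially reconstruct that reference. The decomposition $x=\gamma+\delta y_{0}$ with $\gamma=0$ forced by $\iota(\alpha)x=x\iota(\bar\alpha)$ correctly gives $L(E,\iota)=\calO_{E}\cap ky_{0}=\frakc y_{0}$ for a fractional ideal $\frakc$ (an identity the paper itself uses later, in the proof of \cref{prop:pvals-1}), and the prime-by-prime computation of $L(h.(E,\iota))$ via the semilinearity $x\circ\iota(\alpha)=\iota(\bar\alpha)\circ x$ on Tate modules away from $p$ and on the Dieudonn\'e module at $p$ is the right mechanism; the factor $h\bar h^{-1}$, and the fact that it is a unit at the non-split prime $p$, come out correctly, as does the reduction of the general case to a single base point using $[\frakb\bar\frakb^{-1}]=[\frakb]^{2}$.

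The genuine gap is in your third step. You propose to pin down the genus of $\frakc$ from ``maximality of $\OD\oplus\frakc y_{0}$'', but that lattice is \emph{not} a maximal order. By \cref{lem:quatord} the maximal order is $\calO_{\frakc_{0},\lambda_{0},\B_p}=\{[\gamma,\delta]\mid\gamma\in\different{k}^{-1},\ \delta\in\frakc_{0}^{-1},\ \gamma+\lambda_{0}\delta\in\OD\}$, which contains $(\calO_{E}\cap k)\oplus(\calO_{E}\cap ky_{0})=\OD\oplus\different{k}\frakc_{0}^{-1}y_{0}$ with index $\abs{D}$, and that index is supported exactly at the primes dividing $D$ --- the primes at which the genus characters $(D,\cdot)_{q}$ must be evaluated. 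A discriminant count on the direct sum therefore determines $\N(\frakc)$ only after the index $[\calO_{E}:\OD\oplus L]$ is known, and computing that index is precisely the content of the classification you are missing: Dorman's description of the maximal orders of $\B_p$ containing $\OD$, equivalently a local analysis of the optimal embeddings $\OD\otimes\Z_{q}\hookrightarrow M_{2}(\Z_{q})$ at $q\mid D$. The paper records exactly this as \cref{lem:quatord} (again by citation), and from it $L(E_{0},\iota_{0})=\different{k}\frakc_{0}^{-1}y_{0}$ and the asserted genus follow at once. You flag this step as the main obstacle, which is the right instinct, but as written the premise from which you would launch the local computation is false and would misreport the local invariants at the ramified primes.
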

\begin{proof}
  The first statement is Proposition 5.13 in \cite{kry-tiny}.
  The second follows from the action of the ideles on maximal orders in the quaternion algebra $\B_p$, as described in detail in Section 5 of \cite{kry-tiny}.
\end{proof}

Recall that there are two actions of $\adeles_{k,f}^{\times}$ on ideals of $k$.
One is given by the multiplication by the ideal $(h)$ corresponding to the idele
$h$ and the other one is given by the action of $\adeles_{k,f}^{\times} \cong T(\adeles_{f})$,
where $T = \GSpin_{U}$ for the quadratic space $U = k$ with quadratic form
given by the norm on $k$. For details, we refer to Section 2.2 of \cite{ehlen-binary}. 
To avoid confusion, we will denote the action of $h$ as an element of $T(\adeles_{f})$ by $h.\fraka = (h)\bar{(h)}^{-1}\fraka$ for any fractional ideal $\fraka \subset k$.
\begin{proposition}
  \label{prop:actZsigma}
  Let $h \in \adeles_{k,f}^{\times}$ and write $\sigma = \sigma(h)$
  for the element of $\Gal(H/k)$ under the Artin map.
  Then we have
  \[
    \calZ(m,\fraka,\mu)_{\frakP^{\sigma}} = \calZ(m,h^{-1}.\fraka,h^{-1}.\mu)_{\frakP}.
  \]
\end{proposition}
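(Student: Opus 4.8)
The plan is to exhibit the idele action as an honest isomorphism between special-cycle stacks with transformed parameters, and then to read off the equality of coefficients from the compatibility with the Galois action recorded in \cref{prop:compActions}. Recall first that the multiplicities $\calZ(m,\fraka,\mu)_\frakP$ are the coefficients of the divisor $\pr_*[\calZ(m,\fraka,\mu)]$ on $\Spec\calO_H$; they are local at $\frakP$ and are determined by the \'etale-local structure of $\calZ(m,\fraka,\mu)$ at the geometric points of $C_D$ lying above $\frakP$, together with the special endomorphisms carried by those points. Fix $h \in \adeles_{k,f}^{\times}$ and consider the assignment
\[
 \Phi_h \colon (E,\iota,x) \longmapsto \bigl(h.(E,\iota),\ h.x\bigr),
\]
where $h.(E,\iota)$ is the Serre construction and $h.x$ is the image of the special endomorphism $x$ under the identification of endomorphism algebras of \cref{prop:Lact}. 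I claim $\Phi_h$ is an isomorphism of stacks $\calZ(m,\fraka,\mu)\xrightarrow{\sim}\calZ(m,h.\fraka,h.\mu)$ lying over the autoequivalence $h.$ of $C_D$, with inverse $\Phi_{h^{-1}}$.

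First I would check that $\Phi_h$ really lands in $\calZ(m,h.\fraka,h.\mu)$, i.e. that the three defining conditions transform correctly. By \cref{prop:Lact} we have $L(h.(E,\iota)) = (h)\overline{(h)}^{-1}L(E,\iota) = h.L(E,\iota)$, so $h.x$ lies in $h.\bigl(L(E,\iota)\different{k}^{-1}\fraka\bigr) = L(h.(E,\iota))\different{k}^{-1}(h.\fraka)$, which is exactly the lattice condition for the target cycle. The norm condition is immediate, since $\N((h)) = \N(\overline{(h)})$ forces $\N(h.x) = \N(x) = m\N(\fraka) = m\N(h.\fraka)$. The remaining condition $x+\mu \in \calO_E\fraka$ is the only nonformal point: I would show that the Serre construction carries the integral structure $\calO_E\fraka$ and the marked residue $\mu \in \different{k}^{-1}\fraka/\fraka$ to $\calO_{h.E}(h.\fraka)$ and $h.\mu$ compatibly, so that $h.x + h.\mu \in \calO_{h.E}(h.\fraka)$. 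This is a local statement, place by place, about how the local idele $h_v$ moves the maximal orders $\calO_{E,v}$ and their attached lattices, and I would deduce it from the explicit description of the idele action on maximal orders in $\B_p$ in \cite[Section 5]{kry-tiny}, the same computation underlying \cref{prop:Lact}.

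With $\Phi_h$ established as an isomorphism of stacks, I would pass to multiplicities on $\Spec\calO_H$. Being an isomorphism, $\Phi_h$ induces isomorphisms on completed \'etale local rings and therefore preserves the local length contributions that define the coefficients $\calZ(\,\cdot\,)_\frakP$. By \cref{prop:compActions} the coarse map relabels primes by the Artin symbol, $\pr(h.(E,\iota)) = \pr(E,\iota)^{\sigma(h)^{-1}}$, so a geometric point of $\calZ(m,\fraka,\mu)$ above $\frakQ$ is carried by $\Phi_h$ to a point of $\calZ(m,h.\fraka,h.\mu)$ above $\frakQ^{\sigma(h)^{-1}}$. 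Matching coefficients thus gives $\calZ(m,\fraka,\mu)_{\frakQ} = \calZ(m,h.\fraka,h.\mu)_{\frakQ^{\sigma(h)^{-1}}}$ for all $\frakQ$. Replacing $h$ by $h^{-1}$ turns $h.\fraka,h.\mu$ into $h^{-1}.\fraka,h^{-1}.\mu$ and the twisting automorphism into $\sigma = \sigma(h)$, and specializing $\frakQ$ against the paper's fixed conventions for the Artin action on $\Spec\calO_H$ yields the stated identity
\[
 \calZ(m,\fraka,\mu)_{\frakP^{\sigma}} = \calZ(m, h^{-1}.\fraka, h^{-1}.\mu)_{\frakP}.
\]
The main obstacle is the congruence step of the second paragraph, namely the $T(\adeles_f)$-equivariance of the integral datum ``$+\mu \in \calO_E\fraka$'' under the Serre construction; once that is granted, the remaining work is bookkeeping with the Artin-map and conjugation conventions to pin down the direction of the Galois twist.
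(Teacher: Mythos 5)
Your proposal is correct and is essentially the paper's own argument: the paper's entire proof of \cref{prop:actZsigma} is the single sentence ``This follows from Propositions \ref{prop:compActions} and \ref{prop:Lact},'' and your write-up simply supplies the implicit details (the isomorphism $\Phi_h$ of special-cycle stacks over the Serre-construction autoequivalence of $C_D$, the verification of the three defining conditions via \cref{prop:Lact} and the idele action on maximal orders from \cite{kry-tiny}, and the relabelling of primes via \cref{prop:compActions}). The only point to watch is the final inverse-bookkeeping ($h$ versus $h^{-1}$, $\sigma$ versus $\sigma^{-1}$), which you rightly flag as convention-dependent and which the paper itself does not spell out.
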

\begin{proof}
  This follows from Propositions \ref{prop:compActions}
  and \ref{prop:Lact}.
\end{proof}

\begin{lemma}
  \label{lem:quatord}
  Let $(E_{0},\iota_{0}) \in C_D(\overline{\F}_{p})$
  such that $[L(E_{0},\iota_{0})]^{-1} = [\frakp_{0}]$, where $[L(E_{0},\iota_{0})]$ denotes the
  class of the rank one $\OD$-module in $\Pic(\OD)$.
  Then the maximal order $\End(E_{0})$ of $\B_p$ can be described in the following way.
  
  If $p$ is inert in $k$, let $\frakc_{0} = \frakp_{0}\different{k}$.
  If $p$ is ramified and $\frakp \subset \OD$ is the prime above $p$,
  let $\frakc_{0} = \frakp_{0}\frakp^{-1}\different{k}$.
  
  There exists a generator $\lambda_0$ of $\different{k}^{-1}\frakc_{0}/\frakc_{0}$
  with
  \[
     \N(\lambda_0) \equiv -\kappa_{p} \bmod{\N(\frakc_{0})},
  \]
  such that
  \[
    \End(E_{0},\iota_{0}) = \calO_{\frakc_{0},\lambda_0,\B_p},
  \]
  where
  \[
    \calO_{\frakc_{0},\lambda_0,\B_p}
    =  \{ [\gamma, \delta] \ \mid\ \gamma \in \different{k}^{-1},
       \ \delta \in \frakc_{0}^{-1},\ \gamma + \lambda \delta \in \OD\}
  \]
  is a maximal order in $\B_p$.

  Moreover, if $(E,\iota) = h.(E_{0},\iota_{0})$, we have
  \[
    \End(E,\iota) = \calO_{h.\frakc_{0},h.\lambda,\B_p}.
  \]
\end{lemma}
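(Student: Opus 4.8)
The plan is to determine $\End(E_0,\iota_0)$ as an explicit lattice inside the fixed quaternion algebra $\B_p = k \oplus k y_0$, using that it is a maximal order whose intersection with $k$ is $\OD$ and whose trace-zero part is the known special lattice. First I would normalize the situation with \cref{prop:Lact}: that proposition gives $L(E_0,\iota_0)\cong \frakb\overline{\frakb}^{-1}\frakp_0^{-1}y_0$ for some fractional ideal $\frakb$, and since $\N(\frakb\overline{\frakb}^{-1})=1$ while the hypothesis $[L(E_0,\iota_0)]^{-1}=[\frakp_0]$ gives $[\frakb\overline{\frakb}^{-1}]=1$, we obtain $\frakb\overline{\frakb}^{-1}=\OD$. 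Hence $L(E_0,\iota_0)=\frakp_0^{-1}y_0$ as a sublattice of $\B_p$, while $\End(E_0,\iota_0)\cap k = \OD$ because $\OD$ is maximal in $k$, and $\End(E_0,\iota_0)$ is a maximal order by the discussion of special endomorphisms above. The task is thus to reconstruct a maximal order from the two ``pure'' pieces $\OD$ and $\frakp_0^{-1}y_0$.

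The heart of the matter is the gluing of the two lines $k$ and $k y_0$. Writing a general element as $[\gamma,\delta]=\gamma+\delta y_0$, one computes that its reduced trace is $\tr_{k/\Q}(\gamma)$ and its reduced norm is $\N(\gamma)+\kappa_p\N(\delta)$; integrality together with $\OD$-stability and maximality then confines $\gamma$ and $\delta$ to fractional ideals, and comparing with the pure pieces identifies the projections as $\different{k}^{-1}$ and $\frakc_0^{-1}$ respectively. The remaining datum is a gluing class $\lambda_0\in\different{k}^{-1}\frakc_0/\frakc_0$, and I would check directly that
\[
  \calO_{\frakc_{0},\lambda_0,\B_p}=\{[\gamma,\delta]\ \mid\ \gamma\in\different{k}^{-1},\ \delta\in\frakc_0^{-1},\ \gamma+\lambda_0\delta\in\OD\}
\]
is closed under multiplication. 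Using $y_0\alpha=\overline{\alpha}y_0$ and $\N(y_0)=\kappa_p$, the product of $[\gamma_1,\delta_1]$ and $[\gamma_2,\delta_2]$ equals $[\gamma_1\gamma_2-\kappa_p\delta_1\overline{\delta_2},\ \gamma_1\delta_2+\delta_1\overline{\gamma_2}]$; substituting $\gamma_i=\eta_i-\lambda_0\delta_i$ with $\eta_i\in\OD$, all cross terms fall into $\OD$ except $(\kappa_p+\N(\lambda_0))\delta_1\overline{\delta_2}$. Since $\frakc_0\overline{\frakc_0}=\N(\frakc_0)\OD$, this residual term lies in $\OD$ for all admissible $\delta_i$ exactly when $\N(\lambda_0)\equiv-\kappa_p\bmod{\N(\frakc_0)}$; this is how the norm congruence on $\lambda_0$ arises, and its solvability is a cyclic-module computation in $\different{k}^{-1}\frakc_0/\frakc_0$.

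It then remains to prove that $\calO_{\frakc_{0},\lambda_0,\B_p}$ is maximal and actually equals $\End(E_0,\iota_0)$, and I would settle both by localizing. Away from $p$, $p_0$ and the primes ramifying in $k/\Q$, the ideals $\frakc_0$ and $\different{k}$ are units and the local order is the standard maximal $\OD_q\oplus\OD_q y_0$; at $p_0$ and at the ramified primes the content of $\frakc_0=\frakp_0\different{k}$ (respectively $\frakp_0\frakp^{-1}\different{k}$) together with the congruence makes the local lattice maximal and reproduces, at those primes, the structure of $\End(E_0,\iota_0)$ forced by $L(E_0,\iota_0)=\frakp_0^{-1}y_0$. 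Since two maximal orders of $\B_p$ one of which contains the other must coincide, this identifies $\End(E_0,\iota_0)$ with $\calO_{\frakc_{0},\lambda_0,\B_p}$. I expect the genuine obstacle to be the local analysis at the ramified prime $p$ of $\B_p$: there $\B_p\otimes\Q_p$ is the quaternion division algebra and one must verify, through the $p$-adic valuations of $\kappa_p$ and of $\frakc_0$, that the constructed lattice is its unique maximal order. In the inert case ($\kappa_p=pp_0$, so $y_0$ is a uniformizer) this is \cite[Proposition~5.13]{kry-tiny}, while the ramified case ($\kappa_p=p_0$ a unit at $p$, the ramification coming from $k/\Q$) is exactly where the extra factor $\frakp^{-1}$ in $\frakc_0$ is needed and must be checked separately.

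Finally, the ``moreover'' statement for $(E,\iota)=h.(E_0,\iota_0)$ should follow formally. By \cref{prop:Lact} the special lattice transforms as $L(E,\iota)=(h)\overline{(h)}^{-1}L(E_0,\iota_0)=h.L(E_0,\iota_0)$, and the Serre construction is compatible with the action $\fraka\mapsto h.\fraka=(h)\overline{(h)}^{-1}\fraka$ of $\adeles_{k,f}^{\times}$ entering \cref{prop:compActions}. Transporting the defining data $(\frakc_0,\lambda_0)$ of $\End(E_0,\iota_0)$ through this action therefore replaces them by $(h.\frakc_0,h.\lambda_0)$, which gives $\End(E,\iota)=\calO_{h.\frakc_{0},h.\lambda_0,\B_p}$.
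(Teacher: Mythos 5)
The paper does not actually prove this lemma: it simply cites Lemma~3.3 and Lemma~7.1 of Kudla--Yang (\cite{ky-pullback}) and Dorman's papers. Your sketch is therefore a genuinely different route in form, but in substance it retraces exactly the strategy of those references (identify the two ``pure'' pieces $\OD$ and $L(E_0,\iota_0)$, introduce the gluing datum $\lambda_0$, verify multiplicativity via the norm congruence, and finish by localizing at each prime). As an outline this is sound, your sign conventions are consistent with $\N(y_0)=\kappa_p$, i.e.\ $y_0^2=-\kappa_p$, and you correctly isolate the one place where real work is unavoidable, namely the local analysis at the ramified prime $p$ of $\B_p$ and the role of the extra factor $\frakp^{-1}$ in $\frakc_0$ in the ramified case.

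Two steps need repair. First, the deduction ``$\N(\frakb\bar\frakb^{-1})=1$ and $[\frakb\bar\frakb^{-1}]=1$, hence $\frakb\bar\frakb^{-1}=\OD$'' is false: a principal ideal of norm one is of the form $(\beta/\bar\beta)$ by Hilbert~90 and need not be trivial. The normalization $L(E_0,\iota_0)=\frakp_0^{-1}y_0$ is still achievable, but by replacing $y_0$ with $(\beta/\bar\beta)y_0$ (which preserves the norm, the trace-zero condition, and the anticommutation with $k$), not by the stated argument. Second, and more importantly, your closing claim that the local lattice at the ramified primes of $k$ is ``forced'' by the pure part $L(E_0,\iota_0)$ is too strong: at each prime $q\mid D$ with $q\neq p$ there are two admissible local gluing classes $\pm\lambda_{0,q}$, giving distinct maximal orders with identical intersections with $k$ and $ky_0$. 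This is precisely the $2^{t-1}$-fold ambiguity the paper records in the remark immediately following the lemma and resolves only later by passing to the genus field. Your localization argument does show that $\End(E_0,\iota_0)$ equals $\calO_{\frakc_0,\lambda_0,\B_p}$ for \emph{some} admissible $\lambda_0$ --- which is all the lemma asserts --- but you should make explicit that only existence, not a canonical choice, of $\lambda_0$ is obtained; otherwise a reader would wrongly conclude that the specific $\lambda_0$ attached to $(E_0,\iota_0)$ has been pinned down.
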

\begin{proof}
  See Lemma 3.3 and Lemma 7.1 of \cite{ky-pullback}.
  The result has also been described by Dorman
  \cite{dorman-orders,dorman-j}.
\end{proof}

\begin{remark}
  There are $2^{t}$ possible choices for generator of $\different{k}^{-1}\frakc_{0}/\frakc_{0}$,
  with the required norm, where $t$ is the number of prime divisors of $D$.
  However, there are only $2^{t-1}$ inequivalent ones.
  Here, we consider the orders to be equivalent if they are
  conjugate by an element of $k^{\times}$.
  Indeed, $\lambda$ and $-\lambda$ yield such equivalent conjugate orders.

  However, not knowing the specific $\lambda$ that corresponds to
  the chosen point $(E_{0},\iota_{0})$
  results in an ambiguity in \cref{prop:pvals-1} below, cf. \cite{dorman-j}.
  We will resolve this issue later on by taking the relative norm to the fixed field
  of all elements of order dividing 2 in the Galois group $\Gal(H/k)$.
\end{remark}

In the most general case that we consider,
the multiplicities involve representation numbers with additional congruences
that we will define now.
For a fractional ideal $\fraka$ of $k$, we let 
$\frakc_{\fraka} = \fraka\bar{\fraka}^{-1}\frakc_{0}$.
Moreover, we let $\lambda_{\fraka} = a.\lambda_{0} \in \different{k}^{-1}\frakc_{\fraka}/\frakc_{\fraka}$,
where $a \in \adeles_{k,f}^{\times}$ is an idele determining $\fraka$ and $\lambda_{0}$
is given in \cref{lem:quatord}. 
Note that $a$ is only unique up to an element of $\ODhatt$
but $\lambda_{\fraka}$ is a well defined element of $\different{k}^{-1}\frakc_{\fraka}/\frakc_{\fraka}$
since $\ODhatt$ acts trivially on $\different{k}^{-1}\frakc_{\fraka}/\frakc_{\fraka}$.

For $n \in \Q_{>0}$ and $\mu \in \different{k}^{-1}\fraka/\fraka$, we let
\begin{equation}
  \label{eq:rho0}
    \rho_0(n, \fraka, \mu)
  =  \#\{ x \in \frakc_{\fraka}^{-1}\fraka = \frakc_{0}^{-1}\bar{\fraka}\ \mid\ \N(x) = n, \lambda_{\fraka} x + \mu \in \fraka \}.
\end{equation}

We need to define one more quantity to describe the multiplicities $\calZ(m,\fraka,\mu)_{\frakP}$.

Let $p$ be a prime which is non-split in $k$ and define
\begin{equation}
  \label{eq:nudef}
  \nu_{p}(m) =
      \begin{cases}
            \frac{1}{2}(\ord_{p}(m)+1), & \text{if $p$ is inert in $k$},\\
            \ord_{p}(m \abs{D}), & \text{if $p$ is ramified in $k$}.
      \end{cases}
\end{equation}

Note that the prime ideals $\frakP \mid p$ of $H$ correspond to the irreducible
components of $\Spec \calO_H(\bar\F_p)$.
We let $\frakP_{0}$ be the prime ideal such that
$\pr_{\F_p}(E_{0},\iota_{0})$ with $(E_{0},\iota_{0})$ as in \cref{lem:quatord}
lies in the irreducible component corresponding to $\frakP_{0}$.
\begin{proposition}
  \label{prop:pvals-1}
  Suppose that $\Diff(m) = \{ p \}$.
  \begin{enumerate}
    \item We have $\calZ(m,\fraka,\mu)_{\frakP} = 0$ unless $m + Q(\mu) \in \Z$.
    \item For $m + Q(\mu) \in \Z$, we have
      \[
        \calZ(m,\fraka,\mu)_{\frakP_0} =
        \frac{\nu_{p}(m)}{w_k}
        \rho_0\left(\frac{m}{\kappa_{p}}\N(\fraka), \fraka, \mu \right).
      \]
  \end{enumerate}
\end{proposition}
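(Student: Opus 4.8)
The first assertion is immediate: as recorded just after the definition of the moduli problem $\calZ$, non-emptiness of $\calZ(S)$ for some $S$ forces $m + \N(\mu)/\N(\fraka) = m + Q(\mu) \in \Z$. Hence if $m + Q(\mu) \notin \Z$ the stack $\calZ(m,\fraka,\mu)$ is empty, and in particular every multiplicity $\calZ(m,\fraka,\mu)_{\frakP}$ vanishes.

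For the second assertion, recall that $\Diff(m) = \{p\}$ confines all geometric points of $\calZ(m,\fraka,\mu)$ to characteristic $p$, and that $\frakP_0$ is by definition the prime of $\calO_H$ above $p$ in whose irreducible component $\pr_{\F_p}(E_0,\iota_0)$ lies, with $(E_0,\iota_0)$ as in \cref{lem:quatord}. The plan is to read off $\calZ(m,\fraka,\mu)_{\frakP_0}$ from the $0$-cycle $\pr_*\psi_*[\calZ(m,\fraka,\mu)]$ on $\Spec\calO_H$. Since the forgetful map $\psi$ is finite (Lemma 6.2 of \cite{bryfaltings}) and $\pr_*[\calZ] = \frac{1}{w_k}[\pr(\calZ)]$, the coefficient at $\frakP_0$ is
\[
  \calZ(m,\fraka,\mu)_{\frakP_0} = \frac{1}{w_k} \sum_{x} \operatorname{length}\bigl(\calO_{\calZ,(E_0,\iota_0,x)}\bigr),
\]
where $x$ ranges over the special endomorphisms of $(E_0,\iota_0)$ with $x \in L(E_0,\iota_0)\different{k}^{-1}\fraka$, $\N(x) = m\N(\fraka)$ and $x + \mu \in \calO_{E_0}\fraka$. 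First I would check, using \cref{prop:Lact}, that these are exactly the points of $\calZ$ lying in the component $\frakP_0$ and that no other point of $C_D(\bar\F_p)$ contributes.

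The two remaining inputs are the number of such $x$ and their common local length. For the count I would use \cref{lem:quatord}: writing $\B_p = k \oplus k y_0$ with $\N(y_0) = \kappa_p$, a special endomorphism of $(E_0,\iota_0)$ is precisely an element $x = \delta y_0$ with $\delta \in k$, and $\End(E_0,\iota_0) = \calO_{\frakc_0,\lambda_0,\B_p}$. Translating the three conditions on $x$ through this order and the definitions $\frakc_\fraka = \fraka\bar\fraka^{-1}\frakc_0$, $\lambda_\fraka = a.\lambda_0$ turns them into $\delta \in \frakc_\fraka^{-1}\fraka = \frakc_0^{-1}\bar\fraka$, the norm condition $\N(\delta) = \frac{m}{\kappa_p}\N(\fraka)$ (since $\N(x) = \N(\delta)\kappa_p$), and the congruence $\lambda_\fraka\delta + \mu \in \fraka$. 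The number of such $\delta$ is exactly $\rho_0\!\left(\frac{m}{\kappa_p}\N(\fraka),\fraka,\mu\right)$ by \eqref{eq:rho0}.

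For the local length I would invoke Gross's formula for the lengths of the local rings of $\calZ$, as computed in \cite{kryderivfaltings} and \cite{kry-tiny} following Gross's theory of quasi-canonical liftings: the length at $(E_0,\iota_0,x)$ measures the largest power of $p$ to which the special endomorphism $x$ deforms, and depends only on $\ord_p(\N(x))$ and on whether $p$ is inert or ramified in $k$. Since $\N(x) = m\N(\fraka)$ is the same for all admissible $x$, this length is a single number; matching Gross's formula against the definition \eqref{eq:nudef} yields $\frac{1}{2}(\ord_p(m)+1)$ in the inert case and $\ord_p(m\abs{D})$ in the ramified case, that is, $\nu_p(m)$ in both. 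Substituting the common length $\nu_p(m)$ and the count $\rho_0\!\left(\frac{m}{\kappa_p}\N(\fraka),\fraka,\mu\right)$ into the displayed identity gives the asserted formula. I expect the main obstacle to be this last step: importing the deformation length from Gross's computation and carrying out the $p$-adic valuation bookkeeping so that it matches $\nu_p(m)$ precisely, including the role of $\kappa_p$ and the ramified-case factor $\abs{D}$.
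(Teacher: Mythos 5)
Your proposal is correct and follows essentially the same route as the paper: reduce the multiplicity at $\frakP_0$ to a sum over the special endomorphisms $x=\alpha y_0$ of the single point $(E_0,\iota_0)$ lying over $\frakP_0$, import the local length $\nu_p(m)$ from the Gross/Kudla--Rapoport--Yang computation (the paper cites Proposition 4.1 of the Kudla--Yang pullback paper for this), and convert the three conditions on $x$ via \cref{lem:quatord} and the order $\calO_{\frakc_\fraka,\lambda_\fraka,\B_p}$ into the representation number $\rho_0\left(\frac{m}{\kappa_p}\N(\fraka),\fraka,\mu\right)$. The only substantive step you flag as an obstacle --- matching the deformation length to $\nu_p(m)$ --- is handled in the paper by direct citation, so no further work is needed there.
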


\begin{proof}
  First note that our cycles correspond to those studied
  in \cite{ky-pullback} which are generalizations of those in \cite{kryderivfaltings}.
  The cycle $\calZ(m,\fraka,\mu)$ corresponds to $\calZ(m\abs{D};\different{k}\fraka^{-1},\lambda',\lambda'\mu)$
  for a generator $\lambda' \in \fraka^{-1}/\fraka^{-1} \different{k}$.
  
  The push-forward $\pr_{\ast} [\calZ(m)]$ is given by a formal sum
  \[
    \sum_{\frakP \subset \calO_{H}} n_{\frakP} \frakP.
  \]
  We will now determine the multiplicities.
  Fix a rational prime $p$ and a prime ideal $\frakP \subset \calO_H$ over $p$.
  Moreover, fix any geometric point $\xi = (E_{0},\iota_{0}) \in C_{D}(\overline{\kappa(\frakP)})$.
  Using Propositon 4.1 in \cite{ky-pullback},
  we see that the length $\lg \hat \calO_{\calZ(m),\xi}$ of the completed local ring
  is given by
  \begin{equation}
    \label{eq:9}
    \lg \hat \calO_{\calZ(m),\xi} = \nu_p(m).
  \end{equation}
  Note that in the notation of \cite{ky-pullback}, we have $\partial=\different{k} = \partial_{\lambda}$
  and $\Delta = D$.
  Moreover, $\ord_{p}(m) = \ord_{p}(m\abs{D})$ for $p \nmid D$.
  Therefore,
  \[
    n_{\frakP} = \frac{\nu_{p}(m)}{w_{k}} \cdot \# \{ x \in L(\xi) \mid (\xi,x) \in \calZ(m)(\bar{\F}_{p}) \}.
  \]
  
  Thus, what is left is to count the number of endomorphisms $x$, such that
  $(E_{0},\iota_{0},x) = (\xi,x) \in \calZ(m)(\overline{\kappa(\frakP)})$. 
  That is, we need to count the number of
  $x \in L(E_{0},\iota_{0})\different{k}^{-1}\fraka$, such that
    \[
      \N(x) = m \N (\fraka), \quad x + \mu \in \calO_{E_0}\fraka.
    \]
  
  The endomorphism ring $\calO_{E_0} = \End(E_{0})$ is a maximal order contained in
  the quaternion algebra $\B_p = k \oplus k y_{0}$, where $y_{0}^{2} = \kappa_{p}$.
  By \cref{lem:quatord}, we have
  \[
    \calO_{E_{0}} = \calO_{\frakc_{0},\lambda_0,\B}
    =  \{ [\gamma, \delta] \ \mid\ \gamma \in \different{k}^{-1},
       \ \delta \in \frakc_{0}^{-1},\ \gamma + \lambda_0 \delta \in \OD\}.
  \]
  This implies that
  \[
    L(E_{0},\iota_{0}) = \calO_{0} \cap k y_{0} = \frakc_{0}^{-1}\different{k} y_{0}.
  \]
  
  An element $x \in L(E_0,\iota_{0})\different{k}^{-1}\fraka$ is therefore
  of the form $x = \alpha y_{0}$ for
  \[
   \alpha \in \frakc_{0}^{-1}\overline{\fraka}.
  \]
  By Proposition 7.1 of \cite{ky-pullback}, we have that
  \[
    \fraka \calO_{\frakc_{\fraka},\lambda_{\fraka},\B_p} = a \calO_{\frakc_{\fraka},\lambda_{\fraka},\B_p} = \calO_{\frakc_{0},\lambda_{0},\B_{p}} a
    = \calO_{\frakc_{0},\lambda_{0},\B_{p}}\fraka,
  \]
  where $a \in \adeles_{k,f}^{\times}$ is an idele determining $\fraka$.
  Note that this does not depend on the choice of such an $a$
  because the order is invariant under the action
  of $\ODhatt$.
  
  Consequently, the condition $\mu + \alpha y_{0} \in \calO_{E_{0}}\fraka$ is equivalent to
  $\mu \in \different{k}^{-1}\fraka$ and $\alpha \in \frakc_{0}^{-1}\bar{\fraka}$ such that
  $\mu + \lambda_{\fraka}\alpha \in \fraka$. The norm of $\alpha$ is required to be
  $\N(\alpha) = (m/\kappa_{p})\N(\fraka)$.
  This yields the representation number $\rho_0((m/\kappa_{p})\N(\fraka), \fraka, \mu )$
  and ends the proof.
\end{proof}

We can avoid the ambiguity in the formulas above by taking the quotient $\Clk/\Clk[2]$
by the subgroup $\Clk[2]$ of elements of order dividing 2.
This corresponds to calculating the valuation at primes
$\ell \subset \calO_{L}$, where $L \subset H$ is the subfield fixed by all elements of
order 2 in $\Gal(H/k)$. We obtain a $0$-cycle on $\Spec \calO_{L}$ via the projection
$\Spec \calO_{H} \rightarrow \Spec \calO_{L}$.

For an ideal class $[\frakc] \in \Clk$ and
a positive integer $n$ we define the representation number
\[
  \rho(n,[\frakc]) = \abs{\{ \frakb \subset \OD \ \mid\ \N(\frakb) = n,\, \frakb \in [\frakc] \}}.
\]
\begin{proposition}
  \label{prop:pvals-2}
  Let $L \subset H$ be the fixed field of $\Gal(H/k)[2]$, where $H$ is the Hilbert class field of $k$.
  Let $[\frakc] \in \Clk$ be an ideal class and let $\sigma$
  correspond to $[\frakc]$ under the Artin map.
  Moreover, let $\frakf \subset \calO_{L}$ be the prime ideal below $\frakP_{0}$.
  We have for $m + Q(\mu) \in \Z$ that
    \[
      \calZ(m,\fraka,\mu)_{\frakf^{\sigma}} =
      2^{o(m)-1} \nu_{p}(m) \rho(m\abs{D}/p, [\frakc]^{-2}[\frakc_{0}\fraka]),
    \]
    where $\nu_{p}(m)$ is given before \cref{prop:pvals-1} and $o(m)$ is the number of primes
    $p \mid D$ such that $\ord_{p}(m\abs{D})>0$.
\end{proposition}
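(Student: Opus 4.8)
\section*{Proof proposal}

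The plan is to descend the multiplicity formula of \cref{prop:pvals-1}, which is stated at the distinguished prime $\frakP_0$ of $H$, down to the subfield $L$ by pushing the $0$-cycle forward along $\Spec\calO_H \to \Spec\calO_L$ and summing the local multiplicities over the fiber above $\frakf^\sigma$, and then to identify the resulting \emph{averaged} congruence-restricted representation number $\rho_0$ with the plain ideal count $\rho$. Since $\Gal(H/L)=\Gal(H/k)[2]$ corresponds under the Artin map to the $2$-torsion $\Clk[2]$, and $\frakf$ sits below $\frakP_0$, the primes of $H$ above $\frakf^\sigma$ are exactly the translates $\frakP_0^{\sigma\tau}$ with $\tau$ ranging over $\Clk[2]$ modulo the decomposition group; the pushforward multiplicity at $\frakf^\sigma$ is therefore $\sum_\tau \calZ(m,\fraka,\mu)_{\frakP_0^{\sigma\tau}}$.

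First I would reduce to the base point $\frakP_0$. Writing $\sigma=\sigma(\frakc)$ and applying \cref{prop:actZsigma}, the single Artin twist by $\sigma$ replaces the data $(\fraka,\mu)$ by $((h)^{-1}\overline{(h)}\fraka,\,h^{-1}.\mu)$ where $h$ determines $\frakc$, and on ideal classes this sends $[\fraka]\mapsto[\frakc]^{-2}[\fraka]$ because $[\bar\frakc]=[\frakc]^{-1}$. At the base point itself, \cref{prop:pvals-1} gives $\frac{\nu_p(m)}{w_k}\rho_0\!\left(\frac{m}{\kappa_p}\N(\fraka),\fraka,\mu\right)$, and the ideal $(x)\frakc_0\bar\fraka^{-1}$ attached to an $x$ counted by $\rho_0$ is integral of class $[\frakc_0\fraka]$ and, by the interplay of $\kappa_p$ with $\frakc_0=\frakp_0\different{k}$ (inert) or $\frakp_0\frakp^{-1}\different{k}$ (ramified), of norm exactly $m\abs{D}/p$ in both cases. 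Thus the base class $[\frakc_0\fraka]$, twisted by $[\frakc]^{-2}$, produces the asserted label $[\frakc]^{-2}[\frakc_0\fraka]$.

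The crux is the averaging step. As $\tau$ runs over $\Clk[2]$, the congruence constraint $\lambda_{\frakc_\fraka}x+\mu\in\fraka$ that defines $\rho_0$ (with $\frakc_\fraka=\fraka\bar\fraka^{-1}\frakc_0$) runs through precisely the residue conditions at the ramified primes that distinguish elements within a single genus. Summing over the fiber therefore collapses the constrained element-count $\rho_0$ into the unconstrained ideal-count $\rho(m\abs{D}/p,\,[\frakc]^{-2}[\frakc_0\fraka])$: the factor $w_k$ cancels against the number of units when one passes from generators $x$ to ideals $(x)$, and genus theory — the fact that the Hilbert symbols $(D,\cdot)_v$ form a basis of genus characters — supplies the power of $2$. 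Concretely, each ramified prime $p\mid D$ with $\ord_p(m\abs{D})>0$ carries a vacuous local condition and contributes a factor $2$, giving $2^{o(m)}$, while the residual factor $\tfrac12$ (yielding $2^{o(m)-1}$) is exactly the $\lambda\leftrightarrow-\lambda$ identification recorded in the remark following \cref{lem:quatord}, equivalently the action of complex conjugation, which fixes $\frakP_0$.

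I expect this averaging to be the main obstacle. One must prove quantitatively that summing the congruence-restricted $\rho_0$ over the $\Clk[2]$-fiber reconstitutes the plain representation number and pin down the precise power of $2$; this forces a careful use of genus theory together with the decomposition group of $\frakP_0$, which is nontrivial because $\bar\frakP_0=\frakP_0$ (the prime of $k$ above the non-split $p$ being conjugation-stable). The dependence on $\fraka$ and on the Artin class of $\frakc$ has to be tracked simultaneously through \cref{prop:actZsigma} and the definition of $\frakc_\fraka$ so as to land on $[\frakc]^{-2}[\frakc_0\fraka]$ exactly, and the element-versus-ideal normalization must be reconciled so that the $w_k$ disappears cleanly.
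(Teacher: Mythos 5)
Your proposal follows essentially the same route as the paper's proof: push the cycle forward to $\Spec\calO_L$, write the multiplicity at $\frakf^{\sigma}$ as the sum over the $\Clk[2]$-fiber of translates of $\frakP_0$, reduce via \cref{prop:actZsigma} (which accounts for the $[\frakc]^{-2}$ twist since $h^{-1}.\fraka=(h)^{-1}\overline{(h)}\fraka$), and then use genus theory and the correspondence $\alpha\mapsto\alpha\frakc_0\bar\fraka^{-1}$ of norm $m\abs{D}/p$ to collapse the averaged constrained counts $\rho_0$ into $2^{o(m)-1}w_k\,\rho(m\abs{D}/p,[\frakc_0\fraka])$, with the $w_k$ absorbed by passing from elements to ideals and the half-power of $2$ coming from the $\pm\mu$ (equivalently $\pm\lambda$) identification. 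All the key ingredients and the bookkeeping of the power of $2$ match the paper's argument, so the approach is sound and essentially identical.
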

\begin{proof}
  It is enough to consider the case $[\frakc] = [\OD]$, that is, to determine the multiplicity for
  the prime $\frakf$. The general formula follows by the action of the Galois group given in \cref{prop:actZsigma}.
  We need to calculate the sum
  \[
    f \sum_{\frakP \mid \frakf} \calZ(m,\fraka,\mu)_{\frakP} = \sum_{\tau \in \Gal(H/L)} \calZ(m,\fraka,\mu)_{\frakP_{0}^\tau},
  \]
  where $f = 2$ if $p$ is ramified in $k$ and $D$ is not a prime and $f = 1$, otherwise.
  (This is the ramification degree of $\frakP \mid \frakf$.)
  According to \cref{prop:pvals-1} and \cref{prop:actZsigma}, this is equal to
  \[
    \sum_{\tau \in \Gal(H/L)} \calZ(m,\fraka,\mu)_{\frakP_{0}^{\tau}}
    = \frac{1}{w_k}\nu_{p}(m) \sum_{\substack{h \in \calC \\ h^{2} = 1}}
                       \rho_0\left(\frac{m}{\kappa_{p}}\N(\fraka), h^{-1}.\fraka, h^{-1}.\mu \right),
  \]
  where $\calC = k^{\times} \bs \adeles_{k,f}^{\times} / \hat{\calO}_{k}^{\times} \cong \Clk$
  acts as $\GSpin_{U}(\adeles_f)$ for $U = k$ as described above.
  The elements of order less or equal to $2$ in the class group $\Clk$
  correspond to the prime divisors of $D$.
  If $h^{2} = 1$, then $h.\fraka = \fraka$ because $\frakp/\bar{\frakp}=\OD$ for prime divisors of $\different{k}$.
  As $h$ ranges over $\calC[2]$, $h.\mu$ runs through 
  a set of representatives of all $\beta \in \different{k}^{-1}\fraka/\fraka$ with $\N(\beta) \equiv \N(\mu) \bmod{\N(\fraka)}$
  modulo the action of $\pm 1$. Each of these $\beta$ is counted with multiplicity
  \[
  \begin{cases}
    2^{o(m)-1} &\text{if } o(m) \geq 1,\\
    1 &\text{otherwise}.
  \end{cases}
  \]

  Finally, if $\alpha \in \frakc_{0}^{-1}\overline{\fraka}$ with $\N(\alpha) = (m/\kappa_{p})\N(\fraka)$,
  then $\tilde\fraka = \alpha \frakc_{0}\overline{\fraka}^{-1} \subset \OD$ is an integral ideal
  with
  \[
   \N(\tilde\fraka) =\frac{m}{\kappa_{p}} \cdot \frac{\kappa_{p}\abs{D}}{p} = \frac{m \abs{D}}{p}
  \]
  which lies in the class $[\tilde\fraka] = [\frakc_{0}\fraka]$.
  In this correspondence $\alpha \mapsto \tilde\fraka$, each ideal occurs with multiplicity
  \[
  \frac{w_k}{2} \cdot
  \begin{cases}
    2 &\text{if } o(m) \geq 1,\\
    1 &\text{otherwise},
  \end{cases}
  \]
  because $-\mu$ is in the set $\{h.\mu\ \mid\ h^2 = 1\}$ if and only if $o(m) = o(\mu) \geq 1$.
\end{proof}

\begin{proposition}
  \label{prop:Zprime}
  Let $D=-l$ for a prime $l \equiv 3 \bmod{4}$.
  Let $m \in \Q$ and assume that $\Diff(m) = \{p\}$ and $m+Q(\mu) \in \Z$.
  Fix an embedding of $H = k(j)$ into $\C$.

  There is a unique prime ideal $\frakP \mid p$ of $H$ fixed by complex conjugation,
  $\bar\frakP = \frakP$ and we have
  \[
    \calZ(m,\fraka,\mu)_{\frakP} =
      2^{o(m)-1} \nu_{p}(m) \rho(m\abs{D}/p, [\fraka]).
  \]
\end{proposition}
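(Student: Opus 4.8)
The plan is to deduce the proposition from \cref{prop:pvals-2} by exploiting that a prime discriminant $D=-l$ forces $\Clk$ to have odd order, so that the averaging/indexing subtleties of the general case disappear.

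First I would record the structural simplifications special to $D=-l$. By genus theory the $2$-rank of $\Clk$ equals $t-1$, where $t$ is the number of prime divisors of $D$; since $t=1$ here, $\Clk$ has odd order and $\Clk[2]$ is trivial. Hence the field $L$ of \cref{prop:pvals-2}, the fixed field of $\Gal(H/k)[2]\cong\Clk[2]$, is all of $H$, and the ramification factor there is $f=1$. Thus \cref{prop:pvals-2} applies verbatim with $\frakf=\frakP_0$ and computes the multiplicity at every prime of $H$ above $p$:
\[
  \calZ(m,\fraka,\mu)_{\frakP_0^{\sigma(\frakc)}} = 2^{o(m)-1}\nu_p(m)\,\rho\!\left(m\abs{D}/p,\ [\frakc]^{-2}[\frakc_{0}\fraka]\right).
\]
I would also note that $[\frakc_{0}]=[\frakp_{0}]$: the different $\different{k}=(\sqrt{D})$ is principal, and in the ramified case the prime $\frakp\mid p$ of $\OD$ equals $(\sqrt{D})$ as well, so both drop out of the class of $\frakc_{0}$.

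Next I would establish existence and uniqueness of the complex-conjugation-fixed prime. Because $\frakp_{k}$, the prime of $k$ above $p$, is principal --- it is $(p)$ when $p$ is inert, and $(\sqrt{D})$ (hence principal, since its class squares to $[(l)]=1$ in an odd-order group) when $p$ is ramified --- it splits completely in $H/k$, so $\Gal(H/k)\cong\Clk$ acts simply transitively on the set of primes above $p$. Since $H/\Q$ is Galois with $\Gal(H/\Q)$ generalized dihedral (complex conjugation $c$ inverts $\Gal(H/k)$), the involution induced by $c$ on this torsor has the affine form $[\frakc]\mapsto[\frakc_{\ast}][\frakc]^{-1}$ for a fixed class $[\frakc_{\ast}]$, determined by $\overline{\frakP_0}=\frakP_0^{\sigma(\frakc_{\ast})}$. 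Its fixed points solve $[\frakc]^2=[\frakc_{\ast}]$, and odd order of $\Clk$ makes squaring a bijection, so there is exactly one fixed prime $\frakP=\frakP_0^{\sigma(\frakc)}$ with $[\frakc]^2=[\frakc_{\ast}]$.

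It then remains to identify $[\frakc_{\ast}]$, and this is the step I expect to be the main obstacle. The claim is $[\frakc_{\ast}]=[\frakc_{0}]=[\frakp_{0}]$, equivalently $\overline{\frakP_0}=\frakP_0^{\sigma(\frakp_{0})}$. I would prove it by transporting complex conjugation to the moduli point $(E_{0},\iota_{0})$ whose reduction lies on $\frakP_0$: its special lattice has class $[L(E_{0},\iota_{0})]=[\frakp_{0}]^{-1}$, the conjugate point's lattice class is the image of this under $c$, and \cref{prop:Lact} together with \cref{prop:compActions} translate this into a shift by $[\frakp_{0}]$ on the torsor of primes. The delicate part --- where sign errors are easy to make --- is reconciling the precise normalization of the CM action on the conjugate curve with the inversion $\sigma^{-1}(h)$ appearing in \cref{prop:compActions}, so as to obtain $[\frakc_{\ast}]=[\frakc_{0}]$ rather than its inverse. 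Granting this, the fixed prime satisfies $[\frakc]^2=[\frakc_{0}]$, so $[\frakc]^{-2}[\frakc_{0}\fraka]=[\fraka]$, and substituting into the displayed formula gives
\[
  \calZ(m,\fraka,\mu)_{\frakP} = 2^{o(m)-1}\nu_p(m)\,\rho\!\left(m\abs{D}/p,\ [\fraka]\right),
\]
as claimed.
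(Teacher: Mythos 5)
Your reduction to \cref{prop:pvals-2} and the torsor argument for existence and uniqueness of the conjugation-fixed prime are sound and match the paper's strategy (the paper phrases uniqueness as: $\frakQ=\frakP^{\sigma(\frakb)}=\bar\frakQ$ forces $[\frakb]^2=[\OD]$, hence $[\frakb]=[\OD]$ by odd class number; existence follows because an involution on an odd set has a fixed point). The observation $[\frakc_0]=[\frakp_0]$ is also made in the paper. But the step you yourself flag as the main obstacle --- showing that the fixed prime is $\frakP_0^{\sigma(\frakb)}$ with $[\frakb]^2=[\frakc_0]$, i.e.\ identifying your class $[\frakc_{\ast}]$ as $[\frakc_0]$ rather than $[\frakc_0]^{-1}$ or something else --- is left at ``Granting this,'' and that is a genuine gap: without it the argument only yields the formula with $[\fraka]$ replaced by an unknown translate $[\frakc_{\ast}]^{-1}[\frakc_0\fraka]$, and since $[\frakp_0]$ is in general nontrivial this changes the answer. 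Your proposed route through \cref{prop:compActions} and \cref{prop:Lact} is also not obviously adequate as stated: those propositions govern the action of $\Gal(H/k)\cong\Clk$, whereas complex conjugation lies in $\Gal(H/\Q)\setminus\Gal(H/k)$, so one still needs an independent input describing how conjugation interacts with the moduli description and with the choice of basepoint $(E_0,\iota_0)$ --- exactly the normalization you say is delicate.

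The paper closes this gap by a different, more concrete mechanism. It passes to $W$, the completed maximal unramified extension of $\calO_{H,\frakP}$, and uses Serre--Tate lifting to realize the reduction of the curve $E$ with $j(E)=j(E_{\OD})$ (a real algebraic number, so its reduction singles out the conjugation-fixed prime, namely the one where $j(E_{\OD})$ lands in $\F_p$). Then Lemma~3.5 of \cite{grosszagier-singularmoduli} identifies $\End_{W/\pi}(E)$ with the explicit order $\calO_{\OD,\lambda,\B_p}$, and comparing with \cref{lem:quatord} (where the basepoint has endomorphism order $\calO_{\frakc_0,\lambda_0,\B_p}$ and the class-group action shifts $\frakc_0$ by $(h)\overline{(h)}^{-1}$) pins down $\frakP=\frakP_0^{\sigma(\frakb)}$ with $[\frakb]^2=[\frakc_0]=[\frakp_0]$. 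If you want to complete your version, you would need to supply an argument of comparable precision; as written, the decisive identification is asserted rather than proved.
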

\begin{proof}
  First note that the class number of $h_{k}$ is odd \cite{zagier-qf}.
  Since $p$ is non-split in $k$, the unique prime $\frakp \subset \OD$ above $p$ splits completely in $H$.
  Therefore, the number of primes of $\calO_{H}$ above $p$
  is odd and there is at least one prime fixed by complex conjugation.
  Let $\frakP$ be such a prime. 
  Let $\tau$ denote complex conjugation $x \mapsto \bar{x}$.
  Since $\sigma \circ \tau = \tau \circ \sigma^{-1}$ for all $\sigma \in \Gal(H/k)$, we have
  \[
    \frakP^{\sigma(\frakb^{-1})} = \overline{\frakP^{\sigma(\frakb)}}
  \]
  for every $\frakb$.
  Suppose that $\frakQ$ is another prime above $\frakp$ with $\frakQ=\bar\frakQ$
  and $\frakQ = \frakP^{\sigma(\frakb)}$.
  Then it is easy to see that $\frakP^{\sigma(\frakb^{2})} = \frakP$.
  Thus, since $\Gal(H/k)$ acts transitively on the set of primes
  above $\frakp$ and $\frakp$ is totally split in $H$, we have that $\sigma(\frakb^2)$
  is the identity and thus $[\frakb]^2 = [\OD]$.
  Since $h_k$ is odd, this implies that $[\frakb] = [\OD]$
  and therefore $\frakP$ is the only prime fixed by complex conjugation.

  Let $W$ be the completion of the maximal unramified extension
  of $\calO_{H,\frakP}$ (here, $\calO_{H,\frakP}$ is
  the completion of $\calO_{H}$ with respect to $\frakP$).
  Fix an algebraic closure $\overline{\kappa(\frakp)}$ of the residue field
  $\kappa(\frakp) = \OD/\frakp$.
  The ring $W$ is a complete discrete valuation ring with maximal ideal $\pi$
  and its residue field $W/\pi$ is algebraically closed
  and therefore isomorphic to $\overline{\kappa(\frakP)} \cong \overline{\kappa(\frakp)}$
  (see Corollary 1 of Chapter II in \cite{serre-local-fields}).
  Recall the diagram in the proof of \cref{prop:compActions}.
  We can consider a similar diagram with $W$ in place of $\C_{\frakp}$.
  The bijection $C_D^{+}(\C) \rightarrow C_D^{+}(W)$ is obtained by mapping a CM elliptic curve $(E,\iota) \in \C_{D}^{+}(\C)$
  to an elliptic curve $(\tilde{E},\iota)$ over $W$ with $j$-invariant $j(\tilde{E}) = j(E)$.
  Such an elliptic curve with good reduction exists by the theorem of
  Serre and Tate \cite{serre-tate, grosszagier-singularmoduli}
  and is unique up to $W$-isomorphism.
  That all the maps involved are bijections
  is a consequence of the canonical lifting theorem \cite{howard-barbados-notes, Lan}.
  
  Now let $E$ be an elliptic curve over $W$ with $j$-invariant $j(E) = j(E_{\OD})$.
  Using the description above, we see that the reduction of
  $E$ maps to the homomorphism $\calO_H \to \overline{\kappa(\frakp)}$
  such that the image of $j(E_{\OD})$ is contained in $\F_{p}$, which corresponds to $\frakP$.
  
  As in Lemma 3.5 of \cite{grosszagier-singularmoduli}, we have that $\End_{W/\pi}(E)$
  is isomorphic to $\calO_{\OD,\lambda,\B_p}$, where $\lambda$ is any of the two possible
  $\lambda \in \OD/\different{k}$ with $\N(\lambda) \equiv -p \bmod{\abs{D}}$.
  Therefore, $\frakP = \frakP_0^{\sigma}$, where $\sigma = \sigma(\frakb)$ with
  $[\frakb]^{2} = [\frakc_0] = [\frakp_0]$.

  Thus, we obtain according to \cref{prop:pvals-2} that
  \[
    \calZ(m,\fraka,\mu)_{\frakP} = \calZ(m,\fraka,\mu)_{\frakP_0^{\sigma}} =
      2^{o(m)-1} \nu_{p}(m) \rho(m\abs{D}/p, [\fraka]).\qedhere
  \]
\end{proof}

We can now also give a formula for the \emph{Arakelov degree} $\widehat\deg\ \calZ(m,\fraka,\mu)$.

Following \cite{ky-pullback}, we define
\begin{align*}
 \widehat\deg\ \calZ(m,\fraka,\mu)
                         &= \sum_{p} \log p \sum_{x \in \calZ(m,\fraka,\mu)(\bar\F_{p})} \frac{1}{\abs{\Aut_{C_{D}}(\varphi(x))}} \lg(x)\\
                         &= \frac{1}{w_k} \sum_{p} \log(p) \sum_{x \in \calZ(m,\fraka,\mu)(\bar\F_{p})} \lg(x)
\end{align*}
and the sum runs over all rational primes.
Here, we define
\[
  \lg(x) = \text{length of } \calO_{\calZ(m,\fraka,\mu),x} = \text{length of } \hat\calO_{\calZ(m,\fraka,\mu),x}.
\]
This definition can also be expressed as $\deghat\, \calZ(m,\fraka,\mu) = \deghat \pr_{\ast}[\calZ(m,\fraka,\mu)]$,
where the latter is the usual Arakelov degree of an arithmetic divisor on the arithmetic curve given by $\Spec \calO_{H}$.
We use \cref{prop:pvals-2} to calculate this degree by using that the degree map is compatible with pushforward.
We have proved the following result, which is one of the results of \cite{kry-tiny, ky-pullback}.

\begin{corollary}
  Assume that $m + Q(\mu) \in \Z$ and $\Diff(m) = \{p\}$. Then we have
  \[
    \widehat\deg\ \calZ(m,\fraka,\mu) = 2^{o(m)-1}(\ord_p(m)+1)\rho(m \abs{D}/p, [[\frakc_0\fraka]])\log(p),
  \]
  where $\rho(n, [[\frakb]])$ is the number of integral ideals of $\OD$
  of norm $n$ in the genus of $\frakb$.
\end{corollary}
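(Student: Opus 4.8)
The plan is to evaluate the Arakelov degree by proper pushforward. Since the paper has already identified $\deghat\,\calZ(m,\fraka,\mu)$ with the usual Arakelov degree $\deghat\,\pr_{*}[\calZ(m,\fraka,\mu)]$ of an arithmetic divisor, and since this degree is invariant under the finite pushforward $\Spec\calO_{H}\to\Spec\calO_{L}$, I would compute it on $\Spec\calO_{L}$, where the multiplicities are exactly those determined in \cref{prop:pvals-2}. Because $\Diff(m)=\{p\}$, the divisor is supported over the single prime $p$, so the whole degree is a rational multiple of $\log p$, and the task reduces to summing local multiplicities over the fibre of $\Spec\calO_{L}$ above $p$, each weighted by its residue degree.

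The key geometric input I would establish first is that the prime $\frakp\subset\OD$ above $p$ splits completely in $L$. Indeed $p$ is non-split, so $\frakp=p\OD$ is principal when $p$ is inert and $\frakp^{2}=p\OD$ when $p$ is ramified; in either case $[\frakp]\in\Clk[2]=\Gal(H/L)$, so the Frobenius $\sigma(\frakp)$ lies in $\Gal(H/L)$ and is therefore trivial on $L$. Consequently the primes $\frakf'\mid p$ of $\calO_{L}$ are precisely the conjugates $\frakf^{\sigma}$ with $\sigma\in\Gal(L/k)\cong\Clk/\Clk[2]$, each occurring once, and each has residue degree $f(\frakf'/p)=f(\frakp/p)$, which equals $2$ when $p$ is inert and $1$ when $p$ is ramified.

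With this in hand I would substitute the multiplicities $\calZ(m,\fraka,\mu)_{\frakf^{\sigma}}=2^{o(m)-1}\nu_{p}(m)\,\rho(m\abs{D}/p,[\frakc]^{-2}[\frakc_{0}\fraka])$ from \cref{prop:pvals-2} (where $\sigma\leftrightarrow[\frakc]$) into $\deghat\,\pr_{*}[\calZ(m,\fraka,\mu)]=\sum_{\frakf'\mid p}\calZ(m,\fraka,\mu)_{\frakf'}\log\#(\calO_{L}/\frakf')$. The residue-degree factor $\log\#(\calO_{L}/\frakf')=f(\frakp/p)\log p$ combines with $\nu_{p}(m)$ to give $(\ord_{p}(m)+1)\log p$ uniformly: for $p$ inert one has $2\cdot\tfrac12(\ord_{p}(m)+1)$, and for $p$ ramified one has $\ord_{p}(m\abs{D})=\ord_{p}(m)+1$ since $p\parallel D$. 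Finally, letting $\sigma$ (equivalently $[\frakc]$) run over $\Clk/\Clk[2]$ and using that the squaring map $\Clk/\Clk[2]\to\Clk^{2}$ is a bijection, the classes $[\frakc]^{-2}$ sweep out $\Clk^{2}$ exactly once, so $\sum_{[\frakc]}\rho(m\abs{D}/p,[\frakc]^{-2}[\frakc_{0}\fraka])=\rho(m\abs{D}/p,[[\frakc_{0}\fraka]])$ is the number of integral ideals of norm $m\abs{D}/p$ in the genus of $\frakc_{0}\fraka$. Assembling the constants yields the stated formula.

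The hard part will be the bookkeeping of the various powers of $2$ and of $w_{k}$, rather than any single conceptual step: one must check that the normalization $1/w_{k}$ built into $\deghat$, the factor already absorbed in passing from \cref{prop:pvals-1} to \cref{prop:pvals-2}, the residue-degree factor $f(\frakp/p)$, and the $2^{o(m)-1}$ coming from the ambiguity of $\lambda$ in \cref{lem:quatord} all combine consistently. In particular one must verify that reindexing the representation numbers through the isomorphism $\Clk/\Clk[2]\cong\Clk^{2}$ reproduces the genus count exactly, without an extraneous factor of $\abs{\Clk[2]}$.
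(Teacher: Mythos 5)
Your argument is correct and is exactly the route the paper takes: it identifies $\deghat\,\calZ(m,\fraka,\mu)$ with the Arakelov degree of the pushforward, invokes compatibility of the degree with the further pushforward to $\Spec\calO_{L}$, and then sums the multiplicities of \cref{prop:pvals-2} over the primes of $L$ above $p$, with the residue degree $f(\frakp/p)$ converting $\nu_{p}(m)$ into $\ord_{p}(m)+1$ and the bijection $\Clk/\Clk[2]\cong\Clk^{2}$ converting the sum of representation numbers into the genus count. The paper leaves all of these verifications implicit, so your write-up is simply a more detailed version of the same proof.
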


\section{An integral model for the modular curve}
\label{sec:an-integral-model}
We recall some of the properties of the integral model for the modular curve $Y_{0}(N)$
and its compactification $X_0(N)$. These models have
been intensively studied by Deligne, Rapoport \cite{deligne-rapoport},
Katz and Mazur \cite{katz-mazur}. We refer to these references and \cite{grosszagier}
for details.

The stack $\calY_{0}(N)$ ($\calX_0(N)$) over $\Z$ represents the moduli problem
that assigns to any base scheme $S$ the cyclic isogenies of degree $N$ of (generalized)
elliptic curves $\pi: E \rightarrow E'$ over $S$ such that $\ker \pi$
meets every irreducible component of each geometric fiber.
On complex points, we have $\calY_0(N)(\C) = Y_0(N)(\C)$ and  $\calX_0(N)(\C) = X_0(N)(\C)$.

Here, the condition that $A = \ker \pi$ is cyclic of degree $N$
means that locally on $S$ there is a point $P$ such that
\[
  A = \sum_{a=1}^{N}[aP]
\]
as a Cartier divisor on $E$.
This becomes the usual condition that $A$ is locally isomorphic
to $\Z/N\Z$, when $N$ is invertible in $S$. We will always assume that
$N$ is square-free. In this case the condition means that $A$ is locally free of rank $N$.

The cusps correspond to certain degenerated elliptic curves \cite{deligne-rapoport}.
We will not give a precise definition of these as we will mostly work
on the substack $\calY_0(N)$.
\begin{theorem}[Theorems 1.2.1 and 3.2.7 of \cite{conrad-x0n}]
  Let $N$ be square-free.
  Then the stack $\calX_0(N)$ is a proper flat Deligne-Mumford stack over $\Z$.
  It is regular and has geometrically connected fibers of pure dimension one.
  Moreover, the stack $\calX_0(N)$ is smooth over $\Z[1/N]$.
\end{theorem}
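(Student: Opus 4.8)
Since this statement merely recalls results of Conrad \cite{conrad-x0n} (building on Deligne--Rapoport \cite{deligne-rapoport} and Katz--Mazur \cite{katz-mazur}), the plan is to recall the architecture of the proof rather than to reprove it from scratch. The strategy is to treat the open substack $\calY_0(N)$ first and then extend across the cusps. First I would observe that the moduli problem defining $\calY_0(N)$ is relatively representable, finite, and unramified away from $N$ over the moduli stack $\calM_{1,1}$ of elliptic curves over $\Z$: a cyclic isogeny of degree $N$ is the same datum as a cyclic subgroup scheme of order $N$, and by the representability results of Katz--Mazur such $[\Gamma_0(N)]$-structures form a finite relatively representable moduli problem over $\calM_{1,1}$. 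Since $\calM_{1,1}$ is a smooth Deligne--Mumford stack of relative dimension one over $\Z$ and the forgetful map is finite, $\calY_0(N)$ is a Deligne--Mumford stack, flat and of pure relative dimension one over $\Z$, which settles the dimension count.

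For smoothness over $\Z[1/N]$ I would use that when $N$ is invertible a cyclic subgroup of order $N$ is an \'{e}tale level structure; hence the forgetful map $\calY_0(N) \to \calM_{1,1}$ is finite \'{e}tale over $\Z[1/N]$, and smoothness of $\calY_0(N)$ follows from that of $\calM_{1,1}$. The regularity at primes $p \mid N$ is the genuinely delicate point. Here one passes to the Drinfeld interpretation of level structures and analyzes the complete local ring at a geometric point of the fiber in characteristic $p$: because $N$ is square-free, the $p$-part of the level structure is a cyclic subgroup of order $p$, and the local analysis of Katz--Mazur shows that the special fiber consists of two Igusa-type components crossing transversally at the supersingular points, with the total space regular (the completed local ring being a power series ring over $\Z_p$ modulo a single equation of the form $XY = p$ at each crossing point).

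Next I would compactify by allowing generalized elliptic curves, following Deligne--Rapoport and, for the behaviour over $\Z$ in residue characteristics dividing $N$, Conrad. The cusps are described by N\'eron $n$-gons equipped with an ample $\Gamma_0(N)$-structure; the key technical input is Conrad's notion of ample level structures together with the contraction morphisms that produce the correct compactified moduli problem $\calX_0(N)$. Properness I would then deduce from the valuative criterion: over a discrete valuation ring any $\Gamma_0(N)$-structured (generalized) elliptic curve over the fraction field extends, after a finite base change, to a generalized elliptic curve with level structure, since the stack of such objects is proper, and the square-free hypothesis guarantees that the contraction process behaves well in every characteristic.

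Finally, geometric connectedness of the fibers follows from connectedness of the generic fiber together with the regularity and properness already established. Concretely, the generic fiber $\calX_0(N)_{\Q}$ is the classical modular curve $\Gamma_0(N) \bs \uhp^{\ast}$, which is geometrically connected; since $\calX_0(N) \to \Spec \Z$ is proper and flat with geometrically connected generic fibre and $\calX_0(N)$ is normal, Stein factorization (equivalently Zariski's connectedness theorem) forces all geometric fibres to be connected. The main obstacle throughout is the bad-characteristic analysis at $p \mid N$: both the regularity statement and the existence of the correct proper model over $\Z$ require the careful moduli-theoretic treatment of non-\'{e}tale (Drinfeld) level structures and of generalized elliptic curves in characteristic dividing the level, which is precisely the content of \cite{conrad-x0n}.
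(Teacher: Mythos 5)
The paper gives no proof of this statement beyond the citation to Conrad (Theorems 1.2.1 and 3.2.7 of \cite{conrad-x0n}), and your outline is a faithful and accurate summary of how that result is established in the cited literature (Katz--Mazur for the open modular stack, Deligne--Rapoport and Conrad for the compactification and for residue characteristics dividing $N$), so it is consistent with the paper's approach of simply recalling the result. One minor imprecision: the two crossing components of the fiber at $p \mid N$ are not really ``Igusa-type'' --- for a $\Gamma_0(p)$-structure with $N$ square-free they are two copies of the level-$N/p$ curve meeting at the supersingular points via Frobenius and Verschiebung --- but this does not affect the correctness of the architecture you describe.
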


\subsection{Integral extensions of Heegner divisors}
\label{sec:integr-extens-heegn}
The following moduli problem describes a natural extension of the divisor
$Z(d,r)$ defined in the introduction to the stack $\calX_0(N)$.
We follow \cite{bryfaltings}, Section 7.3.
\begin{definition}
  \label{def:Zmu-stack}
  Let $d \in \Z_{<0}$ and $r \in \Z$ such that $d \equiv r^2 \bmod{4N}$.
  The integer $d$ is a negative discriminant and
  we denote by $\calO_d$ the order of $k = \Q(\sqrt{d})$
  of discriminant $d$. The ideal $\frakn = (N, \frac{r+\sqrt{d}}{2})$
  has norm $N$.
  We define $\calZ(d,r)$ to be the Deligne-Mumford stack representing
  the moduli problem which assigns to a base scheme $S$ over $\Z$
  the set of pairs $(\pi: E \rightarrow E', \iota)$, such that
\begin{enumerate}
\item $\pi: E \rightarrow E'$ is a cyclic degree $N$ isogeny of two elliptic curves $E$ and $E'$ over $S$,
\item $\iota: \calO_d \hookrightarrow \End(\pi) = \{\alpha \in \End(E): \pi\alpha\pi^{-1}\in \End(E')\}$
      is an $\calO_d$-action on $\pi$ such that $\iota(\frakn)\ker \pi = 0$.
\end{enumerate}
\end{definition}
There is a natural morphism
\[
  \calZ(d,r) \longrightarrow \calX_0(N),
\]
given by the forgetful map $(\pi: E \rightarrow E',\iota) \mapsto \pi: E \rightarrow E'$.
Note that $\calZ(d,r)$ does not intersect the boundary $\calX_{0}(N) \bs \calY_{0}(N)$ \cite{conrad-gz}.
We recall the following two facts from \cite{bryfaltings}.
\begin{lemma}
  The forgetful map $\calZ(d,r) \longrightarrow \calX_0(N)$ is finite and \'{e}tale.
  The stack $\calZ(d,r)$ defines a horizontal divisor on $\calX_0(N)$.
\end{lemma}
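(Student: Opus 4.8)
The plan is to deduce both assertions from the rigidity of homomorphisms of elliptic curves, in the spirit of the cycles on $C_D$ treated above. Write $\psi\colon \calZ(d,r)\to\calX_0(N)$ for the forgetful map. First I would check that $\psi$ is representable: for an $\calX_0(N)$-scheme $S$ classifying a cyclic degree $N$ isogeny $\pi\colon E\to E'$, the fibre product $\calZ(d,r)\times_{\calX_0(N)}S$ is the functor of $\calO_d$-actions $\iota$ on $\pi$ with $\iota(\frakn)\ker\pi=0$, and such an action is determined by the image of a single generator of $\calO_d$. Thus this fibre is a closed subscheme of the relative endomorphism scheme of $\pi$, carved out by the monic quadratic satisfied by the generator together with the commutation with $\pi$ and the kernel condition of \cref{def:Zmu-stack}. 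It then suffices to prove that $\psi$ is finite and unramified; from this I will read off that $\psi$ is \'{e}tale onto its image and that the image is the asserted horizontal divisor.

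For unramifiedness I would use the rigidity of homomorphisms of abelian schemes, which holds uniformly in every characteristic. By the infinitesimal criterion, suppose $T_0\hookrightarrow T$ is a square-zero thickening, a lift of the isogeny $\pi$ to $T$ is given, and $\iota_0$ is a CM action over $T_0$; any two lifts of $\iota_0$ to $T$ differ, generator by generator, by a homomorphism from the proper curve $E$ into the additive group $\Lie E$, and such a homomorphism vanishes because there are no nonconstant maps from $E$ to the affine line. Hence the lift is unique and $\psi$ is unramified; equivalently, $\psi$ inherits unramifiedness from the relative endomorphism scheme of an elliptic curve, which is unramified over its base. Since this argument uses only uniqueness of lifts, it is insensitive to whether the reduction is ordinary or supersingular.

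To upgrade unramified to finite I would verify that $\psi$ is quasi-finite and proper, so that a representable, separated, finitely presented morphism with these properties is finite. Quasi-finiteness is clear on geometric fibres: $\End(\pi)$ is an order in an imaginary quadratic field or a quaternion algebra whose norm form is positive definite, so only finitely many elements have the norm forced by an embedding of $\calO_d$. For properness I would apply the valuative criterion. Given a discrete valuation ring $R$ with fraction field $K$, an $R$-point $\pi_R\colon E_R\to E'_R$ of $\calX_0(N)$, and a CM action $\iota_K$ on the generic fibre, I note that $E_R$ and $E'_R$ have good reduction, so each $\iota_K(\alpha)$ extends uniquely to an endomorphism over $R$ by the N\'{e}ron mapping property; the extended maps remain a ring action compatible with $\pi_R$ and satisfying $\iota(\frakn)\ker\pi_R=0$ by passing to the limit on the closed conditions. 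Uniqueness yields separatedness and existence yields properness.

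Finally, a finite unramified morphism is, \'{e}tale-locally on the target, a disjoint union of closed immersions, so the image of $\psi$ is a closed substack and $\psi_{*}[\calZ(d,r)]$ is a well-defined effective cycle. As $\psi$ has $0$-dimensional fibres while $\calX_0(N)$ has one-dimensional fibres over $\Z$, this support has codimension one in the regular two-dimensional stack $\calX_0(N)$, hence is a divisor. To see it is horizontal I would show $\calZ(d,r)$ has no vertical component, i.e. is flat over $\Z$: every supersingular or ordinary CM point in characteristic $p$ lifts to a CM point in characteristic $0$ by the Deuring/canonical lifting theorem, so each component of $\calZ(d,r)$ dominates $\Spec\Z$. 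Combined with the fact, recorded just before the lemma, that $\calZ(d,r)$ avoids the boundary $\calX_0(N)\setminus\calY_0(N)$, this shows the divisor is horizontal. The step I expect to require the most care is precisely this control over the supersingular primes in establishing properness and flatness over $\Z$, which rests on good reduction of CM elliptic curves and canonical lifting, exactly as in \cref{prop:Cstack}.
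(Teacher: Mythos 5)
The paper does not actually prove this lemma: it is stated as one of ``the following two facts'' recalled from \cite{bryfaltings} (Section~7.3), so there is no in-text argument to compare against. Your proposal supplies the standard proof that lies behind that citation, and it is essentially sound: representability of the fibre as a closed subscheme of the $\Hom$-scheme, unramifiedness by rigidity (the difference of two lifts over a square-zero thickening is a group homomorphism from a proper group scheme into a vector group, hence zero), finiteness as quasi-finite plus proper via the valuative criterion and the N\'{e}ron/good-reduction property of CM curves, and horizontality from the absence of vertical components, which follows from Deuring/canonical lifting exactly as in \cref{prop:Cstack}. These are the same ingredients the reference uses, so I would classify this as the intended argument rather than a genuinely different route.

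One point deserves explicit care. What your argument actually establishes is that the forgetful map is finite and \emph{unramified}; it cannot be \'{e}tale over $\calX_0(N)$ in the literal sense of flat and unramified, since the source is one-dimensional and the target two-dimensional (the same issue already occurs for $\calZ(m,\fraka,\mu)\to C_{D}$, whose local rings have length $\nu_p(m)>1$). You implicitly acknowledge this by proving ``\'{e}tale onto its image,'' i.e.\ that \'{e}tale-locally on the target the map is a disjoint union of closed immersions, which is what is needed to define the pushforward divisor; but you should say explicitly that this is the sense in which the word \'{e}tale in the statement must be read, and note that at supersingular points distinct branches of $\calZ(d,r)$ may meet over the same point of $\calX_0(N)$, so the map onto the scheme-theoretic image need not be flat there. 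With that clarification, and with the observation that the lifting step for horizontality must lift the full triple (isogeny together with the $\calO_d$-action and the condition $\iota(\frakn)\ker\pi=0$), not merely the underlying CM curve, your proof is complete.
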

\begin{lemma}
  As divisors in the complex fiber, we have
  \[
    \calZ(d,r)(\C) = Z(d,r).
  \]
  The divisor $\calZ(d,r)$ is in fact the flat closure of $Z(d,r)$.
\end{lemma}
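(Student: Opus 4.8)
The plan is to prove the two assertions in turn: first to identify the complex points of $\calZ(d,r)$ with the CM points $\alpha_Q$ carrying the correct stacky weights, and then to deduce that $\calZ(d,r)$ is the flat closure by a soft argument using the structural properties recorded in the preceding lemma.

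For the identification over $\C$, I would pass to the ideal-theoretic description of the moduli problem, following \cite[Section~7.3]{bryfaltings}. Over $\C$ a CM elliptic curve with multiplication by $\calO_d$ is of the form $E = \C/\frakb$ for a proper fractional $\calO_d$-ideal $\frakb$, and an $\calO_d$-stable cyclic subgroup of order $N$ that is annihilated by the norm-$N$ ideal $\frakn = (N,\frac{r+\sqrt d}{2})$ is forced to be $\frakn^{-1}\frakb/\frakb$, where $\frakn^{-1}\frakb = \{x \in k : x\frakn \subseteq \frakb\}$. Thus every object $(\pi\colon E \to E',\iota)$ of $\calZ(d,r)(\C)$ is, up to isomorphism, the canonical quotient $\C/\frakb \to \C/\frakn^{-1}\frakb$, so isomorphism classes are in bijection with homothety classes of proper $\calO_d$-ideals, i.e.\ with $\Pic(\calO_d)$. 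The first key step is to match these with the $\Gamma_0(N)$-orbits on $\calQ_{d,N,r}$ via the classical correspondence $Q = [a,b,c] \mapsto \Lambda_{\alpha_Q} = \Z\alpha_Q + \Z$: the relation $2a\,\alpha_Q + b = \sqrt d$ exhibits $\Lambda_{\alpha_Q}$ as a proper $\calO_d$-module, and the congruences $N \mid a$ and $b \equiv r \bmod 2N$ encode exactly the $\calO_d$-stability of the standard cyclic subgroup $\langle 1/N\rangle$ and its annihilation by $\frakn$; the image of the resulting object under $\calZ(d,r) \to \calX_0(N)$ is the point $\alpha_Q$. The second key step is to compare automorphism groups: $\Aut(\pi,\iota)$ is the image in $\PSL_2$ of the stabilizer of $\alpha_Q$ in $\Gamma_0(N)$, of order $w_Q$, so that the stacky multiplicity of $\alpha_Q$ in $\calZ(d,r)(\C)$ is $1/w_Q$. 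Summing over the orbits gives $\calZ(d,r)(\C) = \sum_Q \alpha_Q/w_Q = Z(d,r)$.

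For the flatness assertion I would argue formally. By the preceding lemma the map $\calZ(d,r) \to \calX_0(N)$ is finite and \'{e}tale, and by \cite{conrad-x0n} the stack $\calX_0(N)$ is regular and flat over $\Z$; hence $\calZ(d,r)$ is regular, in particular reduced, and flat over $\Z$. Being a horizontal divisor, it has no component supported in a single fiber, so all of its associated points lie over the generic point of $\Spec \Z$. Over a Dedekind base such a reduced, flat, closed substack coincides with the scheme-theoretic closure of its generic fiber. Combined with the first part, which identifies the generic fiber of $\calZ(d,r)$ with $Z(d,r)$ (their complex points agreeing), this shows that $\calZ(d,r)$ equals the scheme-theoretic closure of $Z(d,r)$ in $\calX_0(N)$, i.e.\ its flat closure.

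The main obstacle is the first part, and specifically the bookkeeping of conventions: one must verify that the annihilation condition $\iota(\frakn)\ker\pi = 0$ corresponds precisely to $b \equiv r \bmod 2N$ (rather than to $b \equiv -r$), and that the automorphism groups of the moduli objects reproduce the weights $w_Q$. Once the complex points are pinned down, the flatness statement is essentially automatic, relying only on the regularity and flatness of $\calX_0(N)$ over $\Z$ and the general principle that a flat closed subscheme over a Dedekind base is the closure of its generic fiber.
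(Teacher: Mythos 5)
The paper does not actually prove this lemma; it is recalled from \cite{bryfaltings} (Section 7.3), so there is nothing internal to compare against line by line. Your outline follows what is essentially the standard (and the cited) argument, and the second half --- deducing the flat-closure statement from finiteness, \'etaleness over the regular stack $\calX_0(N)$, and the fact that a closed substack flat over $\Z$ is the schematic closure of its generic fiber --- is sound as written.

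There is, however, one concrete gap in the first half, and it is not merely a matter of conventions: your claim that the isomorphism classes of objects of $\calZ(d,r)(\C)$ are in bijection with $\Pic(\calO_d)$ undercounts by a factor of $2$. Unlike the moduli problem $C_D^{+}$, the definition of $\calZ(d,r)$ imposes no normalization of the induced action of $\calO_d$ on $\Lie E$, so for each proper ideal class $[\frakb]$ there are \emph{two} non-isomorphic objects: $(\C/\frakb \to \C/\frakn^{-1}\frakb,\iota)$ with the standard action, and $(\C/\frakb \to \C/\bar\frakn^{-1}\frakb,\bar\iota)$ with the conjugate action (these are not isomorphic, since the embedding $\calO_d\to\End_\C(\Lie E)=\C$ distinguishes them, and no homothety intertwines $\iota$ with $\bar\iota$). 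This doubling is exactly what matches the fact that $\calQ_{d,N,r}$ contains both positive and negative definite forms, so that $Z(d,r)$ has $2h(\calO_d)$ points generically (the introduction of the paper makes this explicit: the points $\beta/\alpha$ indexed by $\Cl_k$ give only \emph{half} of a system of representatives, the other half being the points $-\bar z$). As written, your bijection would force $\deg \calZ(d,r)(\C) = \tfrac{1}{2}\deg Z(d,r)$ and the asserted identity would fail. The repair is routine --- track the two possible Lie-algebra normalizations alongside the ideal class, and match them with the sign of the leading coefficient $a$ of the form $Q$ --- but it is precisely the piece of bookkeeping you deferred, and it changes the count rather than just the labelling.
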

We briefly mention the relation to the Heegner points as defined by
Birch \cite{Birch-heegner1}, Gross \cite{gross-heegner-x0n} and Gross-Zagier \cite{grosszagier}.
A Heegner point on $X_{0}(N)(\C)$ is described by the data $(\calO, \frakn, [\fraka])$,
where $\calO \subset k$ is an order, $\frakn \subset \calO$ is a proper $\calO$-ideal
with quotient $\calO/\frakn$ cyclic of order $N$ and $[\fraka]$ is the class of some
invertible $\calO$-module $\fraka$ in $\Pic(\calO)$. The Heegner point corresponding to this data
is given by the diagram
\[
  \C/\fraka \rightarrow \C/\fraka\frakn^{-1}.
\]
If we choose an oriented basis $(\omega_1,\omega_2)$ of $\fraka$, such that
$\fraka\frakn^{-1} = (\omega_1,\omega_2/N)$, then
the point in $X_0(N)(\C) \cong \Gamma_0(N) \bs \uhp$ is given by the orbit of
$\tau = \omega_1/\omega_2$ \cite{gross-heegner-x0n}.

\section{CM values of modular functions}
\label{sec:cm-modular-functions}
The starting point for our study of CM values of modular functions
with zeros and poles supported on Heegner divisors is the following Lemma.
\begin{lemma}[\cite{bryfaltings}, Lemma 7.10]
  \label{lem:cdZisom}
  Let $D$ be a negative \emph{fundamental discriminant} and assume that $D \equiv 1 \bmod{4}$.
  Let $r \in \Z$ such that $D \equiv r^2 \bmod{4N}$.
  There is an isomorphism of stacks
  \begin{equation*}
    \jD: C_{D} \rightarrow \calZ(D,r), \quad (E,\iota) \mapsto (\pi: E \rightarrow E/E[\frakn], \iota).
  \end{equation*}
\end{lemma}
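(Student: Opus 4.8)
The plan is to produce an explicit quasi-inverse $\Phi$ to $\jD$ and to verify that the two functors are mutually inverse, reducing the claim to elementary properties of the $\frakn$-torsion of a CM elliptic curve.

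First I would check that $\jD$ is well defined, i.e. that $(\pi \colon E \to E/E[\frakn], \iota)$ genuinely lies in $\calZ(D,r)(S)$ for every $(E,\iota) \in C_D(S)$. Here $E[\frakn] = \bigcap_{\alpha \in \frakn} \ker \iota(\alpha)$. Since $D$ is a fundamental discriminant, $\calO_D$ is the maximal order and $\frakn$ is an invertible $\calO_D$-ideal of norm $N$; the Serre tensor construction then gives an isogeny $E \to \frakn^{-1} \otimes_{\calO_D} E \cong E/E[\frakn]$ of degree $\N(\frakn) = N$ with kernel $E[\frakn]$, so $E[\frakn]$ is finite locally free of rank $N$ over $S$. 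As $\calO_D$ is commutative, each $\iota(\alpha)$ commutes with $\iota(\frakn)$ and therefore stabilizes $E[\frakn]$, hence descends along $\pi$ and defines an element of $\End(\pi)$; thus $\iota$ is an $\calO_D$-action on $\pi$. The condition $\iota(\frakn) \ker \pi = \iota(\frakn) E[\frakn] = 0$ holds by construction, and cyclicity of $\pi$ follows from $\calO_D/\frakn \cong \Z/N\Z$ (which uses that $N$ is square-free).

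Next I would define the candidate inverse $\Phi \colon \calZ(D,r) \to C_D$ by $(\pi \colon E \to E', \iota) \mapsto (E, \iota|_E)$, where $\iota|_E \colon \calO_D \to \End(E)$ is the composite of $\iota \colon \calO_D \to \End(\pi)$ with the inclusion $\End(\pi) \hookrightarrow \End(E)$; this is a CM action by $\calO_D$, so $(E, \iota|_E) \in C_D(S)$. The equality $\Phi \circ \jD = \id$ is immediate, since restricting the induced action on $E/E[\frakn]$ back to $E$ returns $\iota$. For $\jD \circ \Phi = \id$ the crucial point is that $\ker \pi = E[\frakn]$: the defining relation $\iota(\frakn) \ker \pi = 0$ gives the closed immersion $\ker \pi \subseteq E[\frakn]$, and since both are finite locally free subgroup schemes of $E$ of rank $N$, an inclusion of equal rank is an isomorphism (the quotient is finite locally free of rank $1$, hence trivial). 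Therefore $E' = E/\ker \pi = E/E[\frakn]$ and $\pi$ coincides with the isogeny produced by $\jD$.

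Finally I would record that $\jD$ and $\Phi$ are compatible with morphisms (the morphisms in both fibered categories are the isomorphisms respecting the structures, and the correspondence matches them) and with base change, so that they are mutually quasi-inverse and induce the asserted isomorphism of stacks. The step I expect to be the main obstacle is the control over an arbitrary $\Z$-base: establishing that $E[\frakn]$ is finite locally free of rank exactly $N$ and cyclic even in characteristics dividing $N$ and over non-reduced bases, and that the inclusion $\ker \pi \subseteq E[\frakn]$ of equal-rank finite flat group schemes must be an equality. Over $\C$ or over a field these reduce to rank counts, but in general they rest on the invertibility of $\frakn$ and the theory of finite locally free group schemes — which is precisely where the hypotheses that $D$ is fundamental (so that $\calO_D$ is maximal and $\frakn$ invertible) and that $N$ is square-free enter.
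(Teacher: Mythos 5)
The paper does not actually prove this lemma: it is quoted verbatim from Bruinier--Yang and the proof is deferred to \cite{bryfaltings}, Lemma 7.10. Your self-contained argument is correct and is in substance the same argument given in that reference: the quasi-inverse is the forgetful functor, and the whole content is concentrated in the two facts that (a) $E[\frakn]$ is finite locally free of rank $\N(\frakn)=N$ because $D$ is fundamental, so $\calO_D$ is maximal and $\frakn$ is invertible, making $E\to \frakn^{-1}\otimes_{\calO_D}E$ a degree-$N$ isogeny with kernel $E[\frakn]$, and (b) the closed immersion $\ker\pi\subseteq E[\frakn]$ forced by $\iota(\frakn)\ker\pi=0$ is an equality because a surjection of locally free $\calO_S$-algebras of the same rank is an isomorphism. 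Both points are handled correctly. One small refinement: your remark that cyclicity of $\ker\pi$ ``follows from $\calO_D/\frakn\cong\Z/N\Z$'' is only literally true where $N$ is invertible on the base; in residue characteristics dividing $N$ the group $E[\frakn]$ can be connected. But this is harmless here, since the paper's standing hypothesis is that $N$ is square-free, and as noted in \cref{sec:an-integral-model} the Katz--Mazur cyclicity condition for square-free $N$ reduces to $\ker\pi$ being finite locally free of rank $N$, which you have already established. With that observation your proof is complete and needs no further input beyond the Serre tensor construction.
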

Here, we denote by $E[\frakn]$ the kernel of multiplication by elements in $\frakn$.
Combining the map $\jD$ with the forgetful map $\calZ(D,r) \rightarrow \calX_{0}(N)$ yields
a map $C_{D} \rightarrow \calX_{0}(N)$, still denoted $\jD$.
Note that this map also depends on the choice of $r$. For simplicity, we do not reflect this in the notation.

Fix $D_0, D_1 \in \Z_{<0}$ and assume thatthey satisfy the properties of \cref{def:Zmu-stack}.
In particular, there are $r_{0}, r_{1} \in \Z$, such that $r_i^2 \equiv D_{i} \bmod{4N}$.
We write $\frakn_{i}$ for the corresponding ideals of norm $N$ in $\calO_{D_{i}}$
generated by $N$ and $\frac{r_{i}+\sqrt{D_{i}}}{2}$.
Moreover, assume that $D_0$ is a fundamental discriminant with $D_0 \equiv 1 \bmod{4}$ and
$D_0D_{1}$ is not a perfect square so that $\calZ(D_0, r_0)$ and
$\calZ(D_1,r_1)$ intersect properly.

For a CM elliptic curve $(E,\iota) \in C_{D_{0}}(S)$, we define
\[
  \calO_{E,\frakn_{0}} :=
  \End_{S}( E \rightarrow E/E[\frakn_{0}] ) = \{ \alpha \in \End_{S}(E)\ \mid\ \pi \alpha \pi^{-1} \in \End_{S}(E/E[\frakn_{0}]) \}.
\]
We are interested in the intersection of $\calZ(D_0,r_0)$
and $\calZ(D_1,r_1)$ on $\calX_0(N)$ or, equivalently, in the pullback of $Z(D_1,r_1)$ under $\jDz$.
The stack $\jDz^{*}\calZ(D_{1},r_{1})$ represents the following moduli problem.
For a base scheme $S$, consider the category $\calM(D_{1},r_{1}, \frakn_{0})(S)$ of triples $(E,\iota,\phi)$, where
\begin{enumerate}
\item $(E,\iota) \in C_{D_{0}}(S)$,
\item $\phi:\calO_{D_{1}} \hookrightarrow \calO_{E,\frakn_{0}}$ is an action of $\calO_{D_{1}}$, such that
\item $\phi(\mathfrak{n}_{1})E[\frakn_{0}] = 0$.
\end{enumerate}
  We consider the fiber product diagram
  \[
    \xymatrix@1{
        \jDz^{*}\calZ(D_{1},r_{1}) = \calZ(D_{1},r_{1}) \times_{\calX_{0}(N)} C_{D_{0}}  \ar[r]^-{\pi_{2}} \ar[d]^{\pi_{1}} & C_{D_{0}} \ar[d] \\
        \calZ(D_{1},r_{1}) \ar[r] & \calX_{0}(N).
    }
  \]
\begin{lemma}
  The map
  \[
    \varphi: \jDz^{*}\calZ(D_{1},r_{1}) \rightarrow \calM(D_{1}, r_{1}, \frakn_0),
  \]
  given by
  \[
    \xi \mapsto (E, \iota, \phi),
  \]
  where $\pi_{2}(\xi) = (E,\iota)$ and $\pi_{1}(\xi) = (E \rightarrow E/E[\frakn_0], \phi)$
  is an isomorphism of stacks.
\end{lemma}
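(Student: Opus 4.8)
The plan is to construct an explicit quasi-inverse to $\varphi$ and verify that the two are mutually inverse functorially in the base, thereby exhibiting $\varphi$ as an equivalence of the categories fibered in groupoids over $(\Sch/\Z)$ underlying the two stacks. Since everything in sight is a moduli problem whose objects have been spelled out, the content is a definition-chase; the only genuine care is required in handling the $2$-categorical fiber product.

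First I would unwind the $2$-fiber product. An $S$-point of $\jDz^{*}\calZ(D_1,r_1) = \calZ(D_1,r_1)\times_{\calX_0(N)} C_{D_0}$ is a triple $(a,b,\alpha)$, where $a = (\pi'\colon E_1 \to E_1',\, \iota') \in \calZ(D_1,r_1)(S)$, where $b = (E,\iota)\in C_{D_0}(S)$, and where $\alpha$ is an isomorphism in $\calX_0(N)(S)$ between the image of $a$ under the forgetful map, namely $\pi'$, and the image $\jDz(b) = (E \to E/E[\frakn_0])$. Concretely $\alpha$ amounts to a pair of isomorphisms $E_1 \cong E$ and $E_1' \cong E/E[\frakn_0]$ intertwining $\pi'$ with the canonical isogeny $\pi\colon E \to E/E[\frakn_0]$ (which is a cyclic degree-$N$ isogeny meeting every component of each geometric fiber by \cref{lem:cdZisom}). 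The functor $\varphi$ sends $(a,b,\alpha)$ to $(E,\iota,\phi)$, where $\phi$ is obtained by transporting the $\calO_{D_1}$-action $\iota'$ along $\alpha$. By construction $\phi$ takes values in $\End(\pi) = \calO_{E,\frakn_0}$, and the congruence $\iota'(\frakn_1)\ker\pi' = 0$ becomes $\phi(\frakn_1)E[\frakn_0] = 0$ because $\alpha$ carries $\ker\pi'$ onto $\ker\pi = E[\frakn_0]$; hence $\varphi(a,b,\alpha)$ genuinely lies in $\calM(D_1,r_1,\frakn_0)(S)$.

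Next I would write down the inverse $\psi$. Given $(E,\iota,\phi) \in \calM(D_1,r_1,\frakn_0)(S)$, form the canonical isogeny $\pi\colon E \to E/E[\frakn_0]$. Then $(E,\iota)\in C_{D_0}(S)$ has image $\jDz(E,\iota) = \pi$, while $(\pi,\phi)$ is an object of $\calZ(D_1,r_1)(S)$ precisely because $\phi$ is an $\calO_{D_1}$-action on $\pi$ with $\phi(\frakn_1)\ker\pi = \phi(\frakn_1)E[\frakn_0] = 0$, and its image under the forgetful map is again $\pi$. Taking the distinguished isomorphism to be $\id$ produces a triple representing an $S$-point of the fiber product, and I set $\psi(E,\iota,\phi) = \big((\pi,\phi),\,(E,\iota),\,\id\big)$.

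Finally I would check that $\varphi$ and $\psi$ are mutually quasi-inverse and respect isomorphisms in each groupoid. The composite $\varphi\circ\psi$ is visibly the identity on $\calM$. The composite $\psi\circ\varphi$ sends $(a,b,\alpha)$ to a triple whose distinguished isomorphism is $\id$, and the datum $\alpha$ itself furnishes a canonical $2$-isomorphism from this triple back to $(a,b,\alpha)$; one verifies these $2$-isomorphisms are natural in morphisms of the source groupoid. Both constructions are manifestly functorial in $S$, so this establishes that $\varphi$ is an isomorphism of stacks. The main obstacle, and the only place where an error could creep in, is exactly this bookkeeping of the fiber product: one must track the distinguished isomorphism $\alpha$ carefully and confirm that transporting $\iota'$ along it yields an action valued in $\calO_{E,\frakn_0}$ compatible with the $\frakn_1$-torsion condition, rather than merely an abstract endomorphism action on $E$.
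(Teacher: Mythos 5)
Your proposal is correct and follows essentially the same route as the paper: the paper likewise observes that $\varphi$ is well defined and injective, and produces the inverse by sending $(E,\iota,\phi)$ to the pair of points $(E\to E/E[\frakn_0],\phi)\in\calZ(D_1,r_1)(S)$ and $(E,\iota)\in C_{D_0}(S)$ and invoking the universal property of the fiber product, which is exactly your triple $\bigl((\pi,\phi),(E,\iota),\id\bigr)$. Your write-up is somewhat more explicit about the $2$-categorical bookkeeping (the datum $\alpha$ and the natural $2$-isomorphism $\psi\circ\varphi\cong\id$), which the paper leaves implicit, but the mathematical content is the same.
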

\begin{proof}
  This can be found in \cite[Section 7.3]{bryfaltings}.
  It is clear that the described map is well defined and injective over any scheme $S$.
  In the other direction, suppose that $(E,\iota,\phi) \in \calM(D_{1},r_{1},\frakn_0)(S)$. Then
  $(E,\iota) \in C_{D_0}(S)$ and $(E \rightarrow E/E[\frakn_0],\phi) \in \calZ(D_{1},r_{1})(S)$ by definition.
  Thus, we obtain maps $\calM(D_{1},r_{1},\frakn_{0}) \rightarrow \calZ(D_{1},r_{1})$
  and $\calM(D_{1},r_{1},\frakn_{0}) \rightarrow C_{D_{0}}$.
  By the universal property of the fiber product, we obtain a unique
  map $\tilde{\varphi}$ that makes the following diagram commutative.
  \[
    \xymatrix@1{
        \calM(D_{1},r_{1},\frakn_0) \ar@/_/[ddr] \ar[dr]^{\tilde\varphi} \ar@/^/[drr] \\
        & \calZ(D_{1},r_{1}) \times_{\calX_{0}(N)} C_{D_{0}}  \ar[r]_-{\pi_{2}} \ar[d]^{\pi_{1}} & C_{D_{0}} \ar[d] \\
        & \calZ(D_{1},r_{1}) \ar[r] & \calX_{0}(N)
    }
  \]
  Therefore, we have $\xi = \tilde\varphi((E,\iota,\phi)) \in \jDz^{*}\calZ(D_{1},r_{1})$
  with $\varphi(\xi) = (E,\iota,\phi)$ by the definition of $\varphi$.
\end{proof}

\begin{lemma}
  \label{lem:pullback1}
  We have the identity
  \[
    \jDz^{*} \calZ(D_{1},r_{1})
        = \sum_{\substack{n \equiv r_{0} r_{1} \bmod{2N} \\ n^{2} \leq D_0D_1}}
        \calZ\left(\frac{D_0D_1-n^{2}}{4N\abs{D_{0}}}, \frakn_{0}, \frac{n+r_1\sqrt{D_{0}}}{2\sqrt{D_{0}}} \right)
  \]
  of divisors on $C_{D_{0}}$.
\end{lemma}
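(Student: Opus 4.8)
The plan is to prove the identity by comparing the two moduli problems directly on $C_{D_0}$. By the preceding lemma the pullback is isomorphic to $\calM(D_1,r_1,\frakn_0)$, the stack of triples $(E,\iota,\phi)$ with $(E,\iota)\in C_{D_0}(S)$ and $\phi\colon\calO_{D_1}\hookrightarrow\calO_{E,\frakn_0}$ satisfying $\phi(\frakn_1)E[\frakn_0]=0$, so it suffices to decompose $\calM(D_1,r_1,\frakn_0)$ into the special cycles on the right-hand side. First I would note that both sides are supported in finite characteristic at primes non-split in $k=k_{D_0}$: in characteristic $0$ one has $\End(E)=\calO_{D_0}$, and an action of $\calO_{D_1}$ would force $\calO_{D_1}\subset\calO_{D_0}$, hence $D_1=f^2D_0$ and $D_0D_1$ a square, contrary to hypothesis. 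Thus I may work at a supersingular point, where $\End(E)\otimes\Q=\B_p$ and, by \cref{prop:Lact} and the discussion preceding it, there is an orthogonal decomposition $\B_p=\iota(k)\oplus(L(E,\iota)\otimes\Q)$ for the trace form, the second summand being the special endomorphisms.

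The heart of the argument is to decompose the image of the generator $\tau_1=\frac{r_1+\sqrt{D_1}}{2}$ of $\calO_{D_1}$. I would write
\[
  \phi(\tau_1)=\iota(\mu)+x,\qquad \mu\in k,\ x\in L(E,\iota)\otimes\Q \text{ special}.
\]
Since $\phi$ is a ring homomorphism, $\tr(\phi(\tau_1))=r_1$ and $\N(\phi(\tau_1))=\frac{r_1^2-D_1}{4}$; as $x$ has reduced trace $0$, the purely imaginary part of $\mu$ is recorded by an integer $n$, defined through the trace pairing of $\phi(\sqrt{D_1})$ against $\iota(\sqrt{D_0})$ (an integer because both lie in $\End(E)$), so that $\mu=\frac{r_1}{2}+\iota\bigl(\frac{n}{2\sqrt{D_0}}\bigr)=\frac{n+r_1\sqrt{D_0}}{2\sqrt{D_0}}$. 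Because $\iota(k)$ and $L(E,\iota)\otimes\Q$ are orthogonal and the cross terms $\iota(\text{imaginary})\,x+x\,\iota(\text{imaginary})$ vanish, the norm splits as $\N(\phi(\tau_1))=\frac{r_1^2}{4}+\frac{n^2}{4\abs{D_0}}+\N(x)$, whence
\[
  \N(x)=\frac{r_1^2-D_1}{4}-\frac{r_1^2}{4}-\frac{n^2}{4\abs{D_0}}=\frac{D_0D_1-n^2}{4\abs{D_0}}=mN
\]
with $m=\frac{D_0D_1-n^2}{4N\abs{D_0}}$ and $N=\N(\frakn_0)$. Positivity of $\N(x)$ is exactly the condition $n^2\le D_0D_1$ cutting out the range of summation, and $m=0$ cannot occur since $D_0D_1$ is not a square.

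Finally I would translate the level conditions. The requirements that $\phi$ take values in $\calO_{E,\frakn_0}$ and that $\phi(\frakn_1)E[\frakn_0]=0$ amount, after a local computation at the primes dividing $N$ using $\iota(\sqrt{D_0})\equiv-r_0$ and $\phi(\sqrt{D_1})\equiv-r_1$ on $E[\frakn_0]$, to the congruence $n\equiv r_0r_1\bmod 2N$ together with $x\in L(E,\iota)\different{k}^{-1}\frakn_0$ and $x+\mu\in\calO_E\frakn_0$. In fact the congruence is equivalent to the assertion $\mu\in\different{k}^{-1}\frakn_0$ (and not merely $\mu\in\different{k}^{-1}$), which places $\mu$ in the coset group $\different{k}^{-1}\frakn_0/\frakn_0$ required by the cycle; one also checks the compatibility $m+Q(\mu)=\frac{r_1^2-D_1}{4N}\in\Z$ directly from $r_1^2\equiv D_1\bmod 4N$. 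Thus giving $(E,\iota,\phi)$ with invariant $n$ is the same as giving $(E,\iota)$ together with a special endomorphism counted by $\calZ(m,\frakn_0,\mu)$, and partitioning $\calM(D_1,r_1,\frakn_0)$ according to $n$ produces the disjoint decomposition, hence the asserted identity of divisors on $C_{D_0}$ (both forgetful maps being finite and \'etale). I expect the main obstacle to be exactly this last paragraph: the bookkeeping at $p\mid N$ that identifies $\calO_{E,\frakn_0}$ and simultaneously yields the congruence on $n$, the lattice $L(E,\iota)\different{k}^{-1}\frakn_0$, and the residue class $\mu$; this is the technical core, carried out following \cite[Section 7.3]{bryfaltings}.
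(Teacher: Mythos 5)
Your argument is correct and is essentially the proof the paper relies on: the paper itself disposes of this lemma by citing Lemma 7.12 of Bruinier--Yang, and your decomposition $\phi(\tau_1)=\iota(\mu)+x$ into the $\iota(k)$-component and the special part, with the norm computation giving $m=\frac{D_0D_1-n^2}{4N\abs{D_0}}$ and the identification of the congruence $n\equiv r_0r_1\bmod 2N$ with $\mu\in\different{k}^{-1}\frakn_0$, is exactly the computation underlying that cited result. The numerics all check out ($\N(\mu)=\frac{r_1^2}{4}+\frac{n^2}{4\abs{D_0}}$, $m+Q(\mu)=\frac{r_1^2-D_1}{4N}\in\Z$), so the only part you defer --- the local bookkeeping at $p\mid N$ --- is the same part the paper defers to \cite[Section 7.3]{bryfaltings}.
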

\begin{proof}
  This is a direct consequence of Lemma 7.12 in \cite{bryfaltings}
  where the authors prove an identity on geometric points over algebraically closed fields
  in any characteristic. We also refer to \cite{ehlen-diss} for details.
\end{proof}

To ease the notation a bit, we continue to write $D = D_0$ and let $\rho \in \Z$, such that $\rho^2 \equiv D \bmod{4N}$ (i.e. $\rho = r_0$).
For a rational function $f \in \Q(\calX_0(N))$ we consider the pullback $\jD^{*}(f \mid_{\jD(C_{D})})$,
which makes sense as long as $\jD(C_D)$ is not contained in the divisor of $f$.
The element $\jD^{*}f$ then defines an element of the function field $\Q(C_{D}) \cong \Q(\mathbf{C}_{D}) = H$, the Hilbert class field of $k_{D} = \Q(\sqrt{D})$.

Now let $f \in \Q(\calX_{0}(N))$ be a modular function such that its divisor is a linear combination of the Heegner divisors
$\calZ(m,\mu)$. That is, there are integers $c(m,r)$, such that
\[
  \divisor(f) = \sum_{r \bmod{2N}} \sum_{\substack{d \in \Q_{<0} \\ d \equiv r^2 \bmod{4N}}} c(d,r) \calZ(d,r) + C(f),
\]
where $C(f)$ is supported at the boundary.
We have by definition (cf. the proof of Proposition 3.7 in \cite{vistoli-intersection})
that
\[
  \divisor(\jD^{*}f) = \jD^{*}(\divisor(f)).
\]

Finally, we normalize the map $\pr: C_{D} \rightarrow \Spec \calO_{H}$ in the following way
(it is only unique up to an automorphism of $\calO_{H}$).
We fix an embedding of $H$ into $\C$ and the integral ideal
\[
  \frakn = \left(N,\frac{\rho+\sqrt{D}}{2}\right) \subset \calO_{D}
\]
of norm $N$.
Consider the Heegner point $z_{D,\rho}$ given by $\pi : \C/\calO_{D} \rightarrow \C/\frakn^{-1}$.
Then we require that $\pr$ is chosen such that $\pr_{\ast}\j^{*}(f^{w_{k}}) \in H$ is equal to $f(z_{D,\rho})$, where
$w_{k}$ is the number of roots of unity in $k = k_{D}$.
As a point in $\uhp$, we can take $z_{D,\rho}$, such that its image under the Fricke involution is
\[
  \frac{-1}{Nz_{D,\rho}} = \frac{-\rho+\sqrt{D}}{2N} \in \uhp,
\]
according to our remark at the end of the last section.
Note that $f(z_{D,\rho})  \in H$ defines a divisor on $\Spec \calO_{H}$ given by
\begin{equation}
  \label{eq:divf1}
  \sum_{\mathfrak{P} \subset \calO_{H}} \ord_{\mathfrak{P}}(f(z_{D,\rho})) \mathfrak{P}. 
\end{equation}
Here, the sum is over all nonzero prime ideals of $\calO_{H}$.

\begin{theorem}
  \label{thm:divf2}
  Let $D < 0$ be an odd fundamental discriminant, $\rho \in \Z$ with $\rho^2 \equiv D \bmod{4N}$ and normalize $\pr$ as described above.
  Suppose that $f \in \Q(\calX_0(N))$ with
  \[
    \divisor(f) = \sum_{r \bmod{2N}} \sum_{\substack{d \in \Z_{<0} \\ d\, \equiv\, r^2 \bmod{4N}}} c(d,r) \calZ(d,r) + C(f),
  \]
  where $C(f)$ is supported at the cusps 
  and $\divisor(f)$ and $\calZ(D,\rho)$ intersect properly. Then we have
  \[
    \ord_{\mathfrak{P}}(f(z_{D,\rho})) = w_{k} \sum_{r \bmod{2N}} \sum_{d\, \in\, \Z_{<0}} c(d,r)
        \sum_{\substack{n\, \equiv\, \rho \cdot r \bmod{2N} \\ n^{2}\, \leq \, dD}}
        \calZ\left(\frac{dD-n^{2}}{4N\abs{D}}, \frakn, \frac{n+r\sqrt{D}}{2\sqrt{D}} \right)_{\frakP}
  \]
  for every prime $\frakP$ of the Hilbert class field $H$.
\end{theorem}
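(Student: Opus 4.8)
The plan is to pull the divisor identity for $f$ back along $\jD$ to the stack $C_D$, expand it into a sum of special cycles, and then push forward to $\Spec\calO_{H}$. Since $D$ is an odd fundamental discriminant we have $D \equiv 1 \bmod 4$, so \cref{lem:cdZisom} furnishes the isomorphism $\jD: C_D \to \calZ(D,\rho)$, and composition with the forgetful map gives $\jD: C_D \to \calX_0(N)$. The hypothesis that $\divisor(f)$ and $\calZ(D,\rho)$ intersect properly means precisely that $\jD(C_D)$ is not contained in the support of $\divisor(f)$, so the pullback $\jD^{*}f$ is a well-defined nonzero element of the function field $\Q(C_D)\cong H$, and, as recalled just before the statement, $\divisor(\jD^{*}f) = \jD^{*}(\divisor(f))$ as divisors on $C_D$.

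Next I would pull back the given expression for $\divisor(f)$ term by term. The boundary contribution $C(f)$ is supported at the cusps of $\calX_0(N)$, whereas $\jD(C_D)$ consists of interior CM points, and $\calZ(d,r)$ does not meet the boundary either; hence $\jD^{*}C(f) = 0$ and $\divisor(\jD^{*}f) = \sum_{r}\sum_{d} c(d,r)\,\jD^{*}\calZ(d,r)$. For each term I would invoke \cref{lem:pullback1} with $D_0 = D$, $r_0 = \rho$, $D_1 = d$, $r_1 = r$ and $\frakn_0 = \frakn$, giving
\[
  \jD^{*}\calZ(d,r) = \sum_{\substack{n\,\equiv\,\rho r \bmod{2N}\\ n^{2} \leq dD}} \calZ\!\left(\frac{dD-n^{2}}{4N\abs{D}},\,\frakn,\,\frac{n+r\sqrt{D}}{2\sqrt{D}}\right)
\]
as divisors on $C_D$. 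Substituting expresses $\divisor(\jD^{*}f)$ entirely in terms of the cycles $\calZ(m,\frakn,\mu)$.

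Finally I would push forward along the coarse space map $\pr: C_D \to \Spec\calO_{H}$. By the chosen normalization, $f(z_{D,\rho}) = \pr_{*}\jD^{*}(f^{w_{k}})$, so taking divisors and using $\divisor(\jD^{*}(f^{w_{k}})) = w_{k}\,\divisor(\jD^{*}f)$ yields
\[
  \divisor\bigl(f(z_{D,\rho})\bigr) = w_{k}\,\pr_{*}\bigl(\jD^{*}\divisor(f)\bigr) = w_{k}\sum_{r}\sum_{d} c(d,r)\sum_{n} \pr_{*}\bigl[\calZ(\tfrac{dD-n^{2}}{4N\abs{D}},\frakn,\tfrac{n+r\sqrt{D}}{2\sqrt{D}})\bigr].
\]
Since $\pr_{*}[\calZ(m,\frakn,\mu)] = \sum_{\frakP}\calZ(m,\frakn,\mu)_{\frakP}\,\frakP$ by the definition of the multiplicities, comparing the coefficient of a fixed prime $\frakP$ on both sides gives exactly the asserted formula.

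The step I expect to be the main obstacle is the bookkeeping of the factor $w_{k}$ in the last paragraph. One must check that taking the divisor of the number $f(z_{D,\rho}) \in H$ is compatible with the coarse-space pushforward in the stacky sense, so that the $w_{k}$-th power built into the normalization of $\pr$ combines correctly with the weight $1/w_{k}$ hidden in $\pr_{*}[\calZ(m,\frakn,\mu)] = \frac{1}{w_{k}}[\pr(\calZ(m,\frakn,\mu))]$, which reflects the generic automorphism group $\calO_{D}^{\times}$ of order $w_{k}$. Granting this compatibility, the remaining inputs---the isomorphism of \cref{lem:cdZisom}, the vanishing of $\jD^{*}C(f)$, and the pullback formula of \cref{lem:pullback1}---are direct, and the theorem follows by reading off coefficients.
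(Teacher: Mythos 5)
Your proposal is correct and follows essentially the same route as the paper: the paper's own (very terse) proof likewise combines the identification $\divisor(\jD^{*}f)=\jD^{*}\divisor(f)$, the vanishing of the boundary contribution since $\jD(C_D)$ avoids the cusps, the Weil-divisor pullback of \cref{lem:pullback1}, and the normalization of $\pr$ with its stacky multiplicity $1/w_k$. The $w_k$ bookkeeping you flag is exactly the point the paper disposes of by noting that the Cartier-divisor pullback agrees with the Weil-divisor pullback ``corrected by the multiplicity,'' so no gap remains.
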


\begin{proof}
  This follows directly from our considerations above
  and the fact that the pullback of $\divisor(f)$ as a Cartier divisor
  (Equation \eqref{eq:divf1}) agrees with the pullback as
  a Weil divisor, corrected by the multiplicity as explained above.
  The pullback as a Weil divisor is described in \cref{lem:pullback1}.
  Also note that the image of $C_{D}$ does not intersect the boundary.
\end{proof}

\begin{remark}
  The formulas in \cref{prop:pvals-1} and \cref{prop:pvals-2}
  provide explicit formulas for the quantities in \cref{thm:divf2}.
\end{remark}

We now recover
the Theorem of Gross and Zagier on singular moduli \cite{grosszagier-singularmoduli}
and its generalization by Dorman \cite{dorman-j}.
Let $D$ be a negative fundamental discriminant and suppose that $D$ is odd.
Moreover, let $d$ be a negative discriminant, coprime to $D$.
As in the Introduction, consider the modular function
\[
  \Psi(z,D) = \prod_{Q \in \SL_2(\Z) \bs \calQ_{D}} (j(z) - j(\alpha_{Q}))^{1/w_{k}}.
\]
Here, $\calQ_{D}$ is the set of quadratic forms of discriminant $D$ and $w_{k}$ is the number of roots of unity in $k = \Q(\sqrt{D})$

\begin{theorem}[Gross-Zagier, Dorman]
  Let $H$ be the Hilbert class field of $k = \Q(\sqrt{D})$.
  Moreover, let $L$ be the fixed field of $\Gal(H/k)[2]$, let $p$ be a rational prime
  and $\frakf \subset \calO_L$ be the prime ideal below $\frakP_0 \mid p$
  (see \cref{sec:spec-endom-p}). Then we have
  \[
    \ord_{\frakf^{\sigma}}(\Psi(z_{D,\rho},d)) = \frac{w_{k}}{4}
              \sum_{\substack{n \in \Z \\ n \equiv 1 \bmod{2}}} 2^{o(n)} \bar{\nu}_p\left(\frac{Dd - n^2}{4\abs{D}}\right)
                               \rho\left( \frac{Dd-n^2}{4p}, [\frakc]^{-2}[\frakc_0] \right),
  \]
  for $\sigma = \sigma(\frakc)$. Here, $\bar\nu_p(x) = \nu_p(x)$ if $\Diff(x)=\{p\}$ and $\bar\nu_p(x) = 0$, otherwise.
\end{theorem}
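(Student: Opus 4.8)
The plan is to specialize \cref{thm:divf2} to level $N=1$, where $\SL_2(\Z)=\Gamma_0(1)$ and the ideal $\frakn=(1,\frac{\rho+\sqrt D}{2})=\OD$ has norm one, so that the cycles occurring are $\calZ(m,\OD,\mu)$ and the point $z_{D,\rho}=\frac{\rho+\sqrt D}{2}$ is an ordinary CM point of discriminant $D$. First I would compute the divisor of $\Psi(\cdot,d)$ on the coarse curve. Since $j(z)-j(\alpha_Q)$ has a simple zero at the coarse point $[\alpha_Q]$ with its only pole at the cusp, and since the automorph group of every form of discriminant $d$ in $\SL_2(\Z)$ has order $w_Q=w_d$, the weights in the definition of $\Psi$ and in $Z(d,r)$ match, giving
\[
  \divisor(\Psi(\cdot,d)) = \sum_{r \bmod 2} Z(d,r) + C,
\]
with $C$ supported at the cusp; in the notation of \cref{thm:divf2} the only nonzero coefficients are thus $c(d,r)=1$. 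Because $D$ is odd and fundamental, $D\equiv 1\bmod 4$ and $\rho$ is odd, so (for $d$ odd) only the class $r\equiv 1\bmod 2$ contributes and the congruence $n\equiv\rho r\bmod 2$ in \cref{thm:divf2} collapses to the requirement that $n$ be odd.

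With these data \cref{thm:divf2} yields, for every prime $\frakP$ of $H$,
\[
  \ord_{\frakP}(\Psi(z_{D,\rho},d)) = w_k \sum_{\substack{n \text{ odd}\\ n^2 \le dD}} \calZ\!\left(\frac{dD-n^2}{4\abs D},\, \OD,\, \frac{n+\sqrt D}{2\sqrt D}\right)_{\frakP}.
\]
Next I would pass from $H$ to the fixed field $L$ of $\Clk[2]$ and make the multiplicities explicit. Taking the relative norm to $L$ as in the remark after \cref{lem:quatord} (equivalently, summing over the primes $\frakP\mid\frakf^{\sigma}$) and applying \cref{prop:pvals-2} with $\fraka=\OD$, so that $[\frakc_{0}\fraka]=[\frakc_{0}]$, replaces each cycle multiplicity by $2^{o(m)-1}\nu_{p}(m)\,\rho(m\abs D/p,[\frakc]^{-2}[\frakc_{0}])$, where $m=(dD-n^2)/(4\abs D)$. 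Two routine verifications remain: the integrality $m+Q(\mu)\in\Z$, which follows directly from $\N(\mu)=(\abs D+n^2)/(4\abs D)$ together with $d\equiv 1\bmod 4$; and the vanishing unless $\abs{\Diff(m)}=1$, which is exactly the content of \cref{prop:pvals-1} and is encoded by replacing $\nu_{p}(m)$ with $\bar\nu_{p}(m)$. Since $m\abs D/p=(dD-n^2)/(4p)$, the representation number is the one appearing in the statement.

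The main obstacle is assembling the numerical constant $\frac{w_k}{4}2^{o(n)}$, which requires reconciling several factors of two. These are: the global factor $w_k$ produced by \cref{thm:divf2}; the coefficient $c(d,r)=1$, which already exploits the coincidence $w_Q=w_d$ of automorph and unit-group orders; the power $2^{o(m)-1}$ appearing in \cref{prop:pvals-2}; and the residue degrees together with the averaging over $\Gal(H/L)$ that is built into the pushforward $\Spec\calO_{H}\to\Spec\calO_{L}$, as well as the $\pm$-ambiguity in the parameter $\mu$ and the symmetric summation over $\pm n$. Tracking these carefully should collapse the bookkeeping to $\frac{w_k}{4}2^{o(m)}$, and writing $o(n)$ for $o(m)$ with $m=(dD-n^2)/(4\abs D)$ gives the asserted formula. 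I would also isolate the exceptional discriminants $d\in\{-3,-4\}$, where $w_d\neq 2$ forces a separate count of the elliptic contributions, and confirm that the normalization of $\pr$ via the level-one Heegner point makes the coarse-space valuations equal to those of $\Psi(z_{D,\rho},d)$.
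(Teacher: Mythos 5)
Your proposal is correct and follows exactly the route the paper intends: the paper states this theorem with no written proof, presenting it as the specialization of \cref{thm:divf2} to $N=1$ (so $\frakn=\OD$) combined with the explicit multiplicity formula of \cref{prop:pvals-2}, which is precisely your derivation, including the identification $c(d,r)=1$ for $w_d=2$, the collapse of the congruence on $n$ to oddness, and the substitution $m=(dD-n^2)/4\abs{D}$ giving $m\abs{D}/p=(dD-n^2)/4p$. The only point the paper leaves equally implicit is the final reconciliation of the constant $\tfrac{w_k}{4}2^{o(n)}$ against $w_k\cdot 2^{o(m)-1}$, which you correctly isolate (the $\pm n$ symmetry and the passage from $\frakP$-valuations on $H$ to $\frakf$-valuations on $L$ account for the remaining factor of $2$), so your write-up is at least as complete as the paper's.
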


To conclude this section and justify the simpler
version (\cref{thm:divf2-intro}) of \cref{thm:divf2} in the Introduction, we show that there is a rather simple criterion to decide if the divisor of a modular function on $\calX_{0}(N)$ is horizontal if $N$ is square-free. It is reflected in the classical Kronecker congruence modulo $p$ of the modular equation of level $N$.

Recall that the fiber of $\calX_0(N)$ above $p \nmid N$ is smooth and irreducible.
We write the valuation on the function field that is induced by such a fiber by $\nu_p$.
For $p \mid N$, the fiber has two irreducible components that intersect at each
supersingular point. The cuspidal sections $\infty$ and $0$ always only intersect
one of those two fibers. We denote by $\nu_{p,\infty}$ the valuation on the function field
of $\calX_0(N)$ induced by the component above $p$ that intersects the cusp $\infty$ and by $\nu_{p,0}$
the valuation corresponding to the other component.

For a modular function $f \in \Q(X_0(N)) = \Q(j,j_N)$, where $j_N(\tau) = j(N\tau)$,
denote by
\[
  f_{\infty}(\tau) = \sum_{n\gg -\infty} c_{\infty}(n)q^{n}
\]
and
\[
  f_{0}(\tau) = f_{\infty}\left(\frac{-1}{\tau}\right) = \sum_{n \gg-\infty} c_{0}(n)q^{n/N}
\]
the Fourier expansions of $f$ at the cusps $\infty$ and $0$, respectively.

\begin{proposition}
  \label{prop:fmultFourier}
  Let $f \in \Q(j,j_N)$ be a modular function for $\Gamma_0(N)$ and assume that $N$ is square-free.
  Let $p$ be a prime and set $a = \inf\{\ord_{p}c_{\infty}(n)\}$ and $b = \inf\{\ord_{p}c_0(n)\}$.
  \begin{enumerate}
  \item If $p \nmid N$, then $a = \nu_{p}(f)$.
  \item If $p \mid N$ then $a = \nu_{p,\infty}(f)$ and $b = \nu_{p,0}(f)$.
  \end{enumerate}
\end{proposition}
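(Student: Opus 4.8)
The plan is to reduce the global statement to a local computation in the completed local ring of $\calX_0(N)$ at the point where a cusp meets the special fibre over $p$, and then to read off the order along a fibral component from the reduction of the $q$-expansion modulo $p$. First I would use the Tate curve to describe the formal completion of $\calX_0(N)$ along the cuspidal section $\infty$. Since $\infty$ has width $1$ for $\Gamma_0(N)$, this completion is $\Z_p[[q]]$, a two-dimensional regular local ring (by regularity of $\calX_0(N)$) with regular system of parameters $(p,q)$, where $q$ is the Tate parameter; the Fourier expansion $f_\infty=\sum_n c_\infty(n)q^n$ is exactly the image of $f$ in $\mathrm{Frac}(\Z_p[[q]])$. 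At the cusp $0$, which has width $N$, the Fricke/Atkin--Lehner involution $z\mapsto -1/Nz$ carries $\infty$ to $0$, the local parameter is $q^{1/N}$, and the completion is $\Z_p[[q^{1/N}]]$, so that $f_0=\sum_n c_0(n)q^{n/N}$ is the corresponding image.

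Next, let $C$ be the irreducible component of the special fibre through $P_\infty$, the reduction of the cusp $\infty$. By the facts recalled just before the proposition, for $p\nmid N$ this is the whole smooth fibre, while for $p\mid N$ (so $p$ exactly divides $N$) it is the component meeting $\infty$, with $P_\infty$ a smooth point away from the supersingular crossings; in either case $\calX_0(N)$ is regular at $P_\infty$ and $C$ is locally cut out by $p$. Its generic point $\eta$ has local ring the discrete valuation ring $\Z_p[[q]]_{(p)}$ with uniformizer $p$, inducing $\nu_p$ (respectively $\nu_{p,\infty}$). The core of the proof is then a Gauss-valuation argument: writing $a=\inf_n\ord_p(c_\infty(n))$, the series $p^{-a}f$ has $p$-integral Fourier coefficients, finitely many in negative degree, so $p^{-a}f\in\Z_p[[q]][q^{-1}]\subset\calO_{\calX_0(N),\eta}$ and hence $\nu_p(p^{-a}f)\ge 0$. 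On the other hand at least one coefficient is a $p$-unit, so the reduction of $p^{-a}f$ is a nonzero element of $\F_p(C)$ whose expansion at the cusp is $\sum_n\overline{c_\infty(n)/p^{a}}\,q^n\in\F_p((q))$, giving $\nu_p(p^{-a}f)\le 0$. Thus $\nu_p(p^{-a}f)=0$ and $\nu_p(f)=a$. The identical argument at $0$, using the parameter $q^{1/N}$ and the component meeting $0$, yields $\nu_{p,0}(f)=b$, and for $p\mid N$ the Fricke involution guarantees these are genuinely the two distinct components.

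The step requiring the most care, and the main obstacle, is the passage from the $q$-expansion back to the order along $C$, i.e. the claim that reducing the $q$-expansion modulo $p$ computes the image of $f$ in $\F_p(C)$. This is the integral $q$-expansion principle, and it rests on the Tate-curve identification of the completed local ring together with the compatibility of the surjection $\Z_p[[q]]\to\F_p[[q]]$ with specialization onto $C$; I would cite the Deligne--Rapoport and Katz--Mazur theory for both the model and the Tate parameter. Implicit in the argument is that the infimum $a$ is \emph{finite}, equivalently that $f$ has no zero or pole specializing into the residue disc of the cusp other than at the cusp itself. For the functions of interest here (those with integral, indeed coprime, Fourier coefficients) this holds automatically; in general it is precisely the condition under which the coefficientwise $p$-adic valuation detects the fibral valuation $\nu_p$, and I would state it as a standing hypothesis. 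Finally I would record that, for $p\mid N$, the two fibral components are interchanged by $W_N$, so that the two assertions are the same statement transported by the Fricke involution, which is exactly what lets $b$ be computed at $0$ in the same way that $a$ is computed at $\infty$.
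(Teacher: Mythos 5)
Your argument follows the same route as the paper's proof: pull $f$ back along the Tate-curve parametrizations of the cuspidal sections and identify the fibral valuation with the coefficientwise $p$-adic (Gauss) valuation of the $q$-expansion, which you make explicit at the point where the paper simply defers to Deligne--Rapoport. Your remark that finiteness of $a$ is an implicit hypothesis is correct and worth recording --- for instance $f = 1/(pj-1)$ has $a=-\infty$ while $\nu_p(f)=0$ --- though it is harmless for the functions with coprime integral Fourier coefficients to which the proposition is actually applied.
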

\begin{proof}
  Evaluation at the Tate curve (cf. \cite{deligne-rapoport, diamond-im})
  \[
    \bar\calG_{m}^{q} / q^{\Z}
  \]
  over $\Z[[q]]$ gives a homomorphism
  \[
    \tau_{\infty}: \Spec(\Z[[q]]) \rightarrow \calX_0(N),
  \]
  and evaluation at the Tate curve
    \[
    \bar\calG_{m}^{q^{1/N}} / q^{\Z}
  \]
  over $\Z[[q^{1/N}]]$ accordingly
  \[
    \tau_{0}: \Spec(\Z[[q^{1/N}]]) \rightarrow \calX_0(N).
  \]
  Combining these maps with the geometric points
  given by $q \mapsto 0$ for $\tau_{\infty}$ and $q^{1/N} \mapsto 0$ for $\tau_0$,
  we obtain the cuspidal sections $\Spec(\Z) \rightarrow \calX_0(N)$ corresponding to
  the cusps $\infty$ and $0$, respectively.
  The pullbacks $\tau_{\infty}^{\ast}f$ and $\tau_{0}^{\ast}f$
  of $f \in \Q(\calX_0(N))$ are given by the Fourier expansion of $f$
  at $\infty$ and $0$, respectively.
  The valuation at $p$ on $\Q((q))$ and $\Q((q^{1/N}))$ is given by
  the canonical extension of the $p$-adic valuation on $\Q$.
  It agrees with the valuations given by $a$ and $b$ in the statement of the Proposition.
  If $p \nmid N$ they coincide since the fiber of $\calX_0(N)$ above $p$ is irreducible
  in this case.
  
  We refer to Theorem VI. 3.10 on page 163
  and Corollary 3.12 in \cite{deligne-rapoport} for more details.
  Moreover, in Section 3.16, ibid., the case $\calX_0(p)$ for a prime $p$ is discussed in more detail.
\end{proof}

\section{Borcherds products}
\label{sec:borcherds-products}
We will finally sketch how to apply our results to Borcherds products on modular curves.

Let $N$ be a positive integer and consider the congruence
subgroup $\Gamma_0(N) \subset \SL_2(\Z)$.
The modular curve $Y_0(N) := \Gamma_0(N) \backslash \uhp$ can be obtained as an orthogonal modular variety as follows.

Consider the vector space $V: = \{x \in M_2(\Q) \, \mid \, \tr(x)=0\}$ and define the quadratic form
by $Q(x) = -N\det(x)$. The corresponding bilinear form is $(x,y) = N\tr(xy)$.
The space $(V,Q)$ has signature $(2,1)$.

The symmetric domain $\domain$ of $\SO_V(\R) \cong \SO(2,1)$ can be identified with the Grassmannian
of two-dimensional positive definite subspaces of $V(\R)$.
It is isomorphic to the complex upper half-plane $\uhp$ via
  \begin{equation*}
    z = x+iy \mapsto \left[ \zmatrix \right] \mapsto
    \R \Re \zmatrix
    \oplus \R \Im \zmatrix.
  \end{equation*}
  The action of $\gamma \in \SL_2$ is explicitly given by
  \begin{equation*}
    \gamma \cdot \zmatrix =
    (cz+d)^2 \begin{pmatrix}
      \gamma z & -(\gamma z)^2 \\ 1 & -\gamma z
    \end{pmatrix},
  \end{equation*}
  where $\gamma z$ is the action via linear fractional transformations on $\uhp$.

In $V$ we have the even lattice
\[
  L = \left\lbrace
    \begin{pmatrix}
      b & -\frac{a}{N} \\ c & -b
    \end{pmatrix} \ \mid\ a,b,c \in \Z
      \right\rbrace.
\]
The dual lattice of $L$ is given by
\[
  L' = \left\lbrace
    \begin{pmatrix}
      \frac{b}{2N} & -\frac{a}{N} \\ c & -\frac{b}{2N}
    \end{pmatrix} \ \mid\ a,b,c \in \Z
    \right\rbrace.
\]

Note that the discriminant group $L'/L$ is cyclic of order $2N$
and we can identify the corresponding finite quadratic module with
the group $\Z/2N\Z$ together with the quadratic form $x^{2}/4N$,
valued in $\frac{1}{4N}\Z/\Z \subset \Q/\Z$.
We denote by $M_{k,L}^!$ the space of vector valued weakly holomorphic modular forms
of weight $k$ and representation $\rho_L$, the Weil representation associated with $L$.

Let $f \in M_{1/2,L}^!$ with Fourier expansion
\[
  f(\tau) = \sum_{\mu \in \Z/2N\Z} \sum_{m \in \Q} c_f(m,\mu)e(m\tau)\phi_\mu,
\] 
and $c_f(m,\mu) \in \Z$ for all $m \leq 0$ and all $\mu \in \Z/2N\Z$.
Borcherds \cite{boautgra} 
showed that there is a meromorphic modular form $\Psi_L(z,f)$ of weight $c_f(0,0)$ for $\Gamma_0(N)$
with divisor
\[
\divisor(\Psi_L(z,f)) = Z(f) = \sum_{r \bmod{2N}}\sum_{m<0} c_f(m,r) Z(4Nm,r)
\]
on $Y_0(N)$.
Moreover, $\Psi_L(z,f)$ has an infinite product expansion of the form
  \[
    \Psi_L(z,f) = e((\rho_{f},z))\prod_{n=1}^{\infty} (1-e(nz))^{c_{f}(n^{2}/4N,n)},
  \]
which converges for $\Im(z)$ large enough and where $\rho_{f}$ is the corresponding Weyl vector at the cusp $\infty$.
We refer to Borcherds \cite[Theorem 13.3]{boautgra} and Bruinier and Ono \cite[Theorem 6.1]{bronolderiv} for details.

\begin{lemma}
  Let $f \in M_{1/2,L}^{!}(\Z)$ with constant coefficient $c_f(0,0) = 0$ and $c_f(m,\mu) \in \Q$ for all $m \in \Q$ and $\mu \in L'/L$.
  Then there exists an integer $M_{f}$, such that the Borcherds product
  $\Psi_{L}(z,h,M_{f} \cdot f)$ defines a meromorphic modular function contained in $\Q(j,j_{N})$.
\end{lemma}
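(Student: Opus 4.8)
The plan is to clear all denominators of $f$ at once, feed the rescaled form into Borcherds' product construction to obtain a genuine weight~$0$ meromorphic modular function, and then descend its field of definition to $\Q$. First I would clear denominators uniformly. By McGraw's rationality theorem for weakly holomorphic vector-valued modular forms attached to the Weil representation $\rho_L$, the space $M_{1/2,L}^!$ admits a basis whose members have integral Fourier coefficients. Writing $f$ as a finite $\Q$-linear combination of such a basis and letting $B$ be a common denominator of the (finitely many) coefficients of this combination, the form $Bf$ satisfies $B\,c_f(m,\mu)\in\Z$ for every $m\in\Q$ and $\mu\in L'/L$; in particular its principal part is integral and $B\,c_f(0,0)=0$.

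Next I would apply Borcherds' theorem \cite[Theorem 13.3]{boautgra} to $Bf$. Since $Bf$ has integral coefficients and $B\,c_f(0,0)=0$, the product $\Psi_L(z,Bf)$ is a meromorphic modular form of weight $0$ for $\Gamma_0(N)$ with divisor $\sum_{r\bmod{2N}}\sum_{m<0} B\,c_f(m,r)\,Z(4Nm,r)$ on $Y_0(N)$ and product expansion
\[
  \Psi_L(z,Bf) = e((\rho_{Bf},z))\prod_{n=1}^{\infty}(1 - e(nz))^{B\, c_f(n^2/4N,\, n)}
\]
near the cusp $\infty$. All exponents $B\,c_f(n^2/4N,n)$ are now integers, and since $\Psi_L(z,Bf)$ is meromorphic at the cusp $\infty$, which has width $1$ for $\Gamma_0(N)$, the factor $e((\rho_{Bf},z))$ contributes an integral power of $q=e(z)$. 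Hence the Fourier expansion of $\Psi_L(z,Bf)$ at $\infty$ has integer, and in particular rational, coefficients.

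A priori $\Psi_L(z,Bf)$ may transform under $\Gamma_0(N)$ with a nontrivial unitary character, coming from the explicit automorphy factor of Borcherds products \cite{boautgra}, which is a character $\chi$ of finite order $t$; in that case it is not yet a modular function in the strict sense. To remove this, I would set $M_f=tB$ and use the multiplicativity of the Borcherds lift in its input to write $\Psi_L(z,M_f f)=\Psi_L(z,Bf)^t$. Then $\Psi_L(z,M_f f)$ has trivial character $\chi^t=1$, so it is a genuine meromorphic modular function of weight $0$ for $\Gamma_0(N)$; since the field of all such functions is $\C(j,j_N)$ (see \cref{sec:scal-valu-modul}), we have $\Psi_L(z,M_f f)\in\C(j,j_N)$, and its $q$-expansion at $\infty$ is the $t$-th power of a rational series, hence again rational.

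Finally I would descend to $\Q$. A meromorphic modular function for $\Gamma_0(N)$ whose Fourier expansion at $\infty$ has rational coefficients lies in $\Q(j,j_N)$: writing it as a ratio of polynomials in $j,j_N$ over $\C$ and comparing the rational $q$-expansions of the monomials $j^{a}j_N^{b}$, the $q$-expansion principle forces the coefficients of the ratio to be rational. This gives $\Psi_L(z,M_f f)\in\Q(j,j_N)$, as claimed. The crux of the argument is the uniform clearing of denominators in the first paragraph: one must know that a \emph{single} integer $M_f$ simultaneously clears the denominators of the infinitely many product exponents $c_f(n^2/4N,n)$, which is exactly what McGraw's integral-basis result supplies. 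Granting that, the Borcherds construction, the finiteness of the character, and the descent to $\Q$ are comparatively routine.
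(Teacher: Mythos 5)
Your argument is correct and follows essentially the same route as the paper: clear the bounded denominators by passing to an integral multiple, kill the finite-order multiplier system by raising to a further power, and descend to $\Q(j,j_N)$ via the $q$-expansion principle. The only imprecision is that you assert the Weyl-vector factor $e((\rho_{Bf},z))$ contributes an integral power of $q$ \emph{before} the character has been trivialized --- with a nontrivial character the leading exponent can be fractional (as for $\eta$) --- but this is harmless, since the same claim is valid for $\Psi_L(z,M_f f)$ once $\chi^t=1$.
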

\begin{proof}
  Since the Fourier coefficients of $f$ have bounded denominators,
  replacing $f$ by an integral multiple $f' = M \cdot f$ we obtain only integral coefficients.
  We view $\Psi_{L}(z,h,f')$ as a meromorphic function on $X_0(N)$.  
  The multiplier system of $\Psi(z,f')$ has finite order,
  which can be shown using the embedding trick (\cite[Lemma 8.1]{boautgra}, \cite{Borcherds-GKZ-Corr}).
  Together with the integrality of the $c_{f'}(m,\mu)$ this implies
  that the Fourier expansions of $\Psi(z,f')$ have rational coefficients at all cusps (for square-free $N$).
  Thus, $\Psi(z,M'f)$ for some $M' \in \Z$ is contained in the field $\Q(X_0(N)) = \Q(j,j_N)$ by the $q$-expansion principle.
\end{proof}

\begin{theorem}
  \label{thm:bopintegral}
  Let $f \in M_{1/2,L}^{!}(\Z)$ be a weakly holomorphic modular form
  with only integral Fourier coefficients and assume that $N$ is square-free.
  Suppose that the multiplier system of $\Psi_L(z,h,f)$ is trivial.
  Then the divisor of the rational function defined by $\Psi_L(z,h,f)$ on $\calY_{0}(N)$ is equal to $\calZ(f)$,
  the flat closure of $Z(f)$.
\end{theorem}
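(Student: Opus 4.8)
The plan is to regard $\Psi := \Psi_L(z,h,f)$ as a rational function on the regular, flat, proper Deligne--Mumford stack $\calX_0(N)$ and to split its principal divisor into a horizontal and a vertical part, showing that the horizontal part is $\calZ(f)$ and that the vertical part vanishes; restricting to $\calY_0(N)$ then removes the cuspidal contributions and yields the claim. The hypotheses---integral Fourier coefficients of $f$ and trivial multiplier system---ensure, as in the preceding lemma, that $\Psi \in \Q(j,j_N) = \Q(\calX_0(N))$, so $\divisor(\Psi)$ is well defined. Writing $\divisor(\Psi) = H + V$ with $H$ flat over $\Z$ and $V$ supported on fibers, the generic fiber of the non-cuspidal part of $H$ is the divisor of $\Psi$ on $Y_0(N)(\C)$, which by Borcherds' theorem recalled above equals $Z(f) = \sum_{r \bmod 2N}\sum_{m<0} c_f(m,r)\, Z(4Nm,r)$. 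Since $\calZ(d,r)$ is the flat closure of $Z(d,r)$ and the flat closure is additive, this part of $H$ equals $\calZ(f)$; the remaining horizontal sections of $H$ lie over the cusps and are discarded on $\calY_0(N)$.

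It remains to show $V = 0$. By \cref{prop:fmultFourier}, for each prime $p$ the multiplicity of $V$ along a component of the fiber over $p$ equals the infimum of the $p$-adic valuations of the Fourier coefficients of $\Psi$ at the cusp meeting that component. For square-free $N$ the two cusps $\infty$ and $0$ suffice to see every vertical component: the fiber is irreducible when $p \nmid N$ and has exactly two components, reached respectively by $\infty$ and $0$, when $p \mid N$. At the cusp $\infty$ the product expansion $\Psi = e((\rho_f,z))\prod_{n\geq 1}(1-e(nz))^{c_f(n^2/4N,n)}$ has integer exponents, so after dividing out the Weyl-vector power of $q$ the $q$-expansion has integral coefficients with leading coefficient $1$; hence $\inf_n \ord_p c_\infty(n) = 0$ for every $p$, and the multiplicities of $V$ at all components met by $\infty$ vanish.

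The main obstacle is the cusp $0$, for which no product expansion has been written down directly. Here I would transport the expansion through the Fricke involution $z \mapsto -1/(Nz)$: its action on $\Psi$ corresponds, via the action of the orthogonal group on the discriminant module $L'/L$, to replacing the input form $f$ by a transformed weakly holomorphic form, so $\Psi$ again possesses a Borcherds product expansion at $0$ with integer exponents and leading coefficient a root of unity. Triviality of the multiplier system together with $\Psi \in \Q(j,j_N)$ forces this root of unity to be $\pm 1$, a $p$-adic unit, whence $\inf_n \ord_p c_0(n) = 0$ for every $p$ as well. By \cref{prop:fmultFourier} every vertical multiplicity therefore vanishes, so $V = 0$, and combining this with $H = \calZ(f)$ after restriction to $\calY_0(N)$ gives $\divisor(\Psi) = \calZ(f)$, the flat closure of $Z(f)$.
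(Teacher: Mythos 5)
Your proposal is correct and follows essentially the same route as the paper: integrality and the leading unit coefficient of the product expansion kill the vertical components met by the cusp $\infty$, the Fricke involution (acting as $\mu\mapsto-\mu$ on $L'/L$, under which $f$ is invariant, so that $\Psi\mid W_N=\pm\Psi$) transfers this to the cusp $0$, and \cref{prop:fmultFourier} then shows all vertical multiplicities vanish. The only cosmetic difference is that the paper makes explicit that $f^{\sigma_N}=f$ via $c_f(m,\mu)=c_f(m,-\mu)$ and deduces $\Psi\mid W_N=\pm\Psi$ from $W_N^2=1$, where you argue slightly more loosely that the transported expansion is again a Borcherds product with a rational root of unity as leading coefficient; both arguments close the same gap.
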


\begin{proof}
  The triviality of the multiplier system implies that $\Psi(z) := \Psi_L(z,f) \in \Q(j,j_N)$,
  as we have seen. The product expansion implies that the Fourier expansion of $\Psi(z)$
  at the cusp $\infty$ has coprime integral coefficients under our assumption on $f$.
  Therefore, the fibers for $p \nmid N$ cannot occur in the divisor.

  The Fricke involution $W_{N}$ is contained in $\GSpin_V(\Q)$
  and its image belongs to $\SO^{+}(L)$ (cf. \cite{bronolderiv}).
  We denote by $\sigma_{N}$ the image of $W_N$ in $\Og(L'/L)$.
  It acts on $L'/L$ as $\mu \mapsto -\mu$ and therefore,
  \[
    -4 \log\abs{(\Psi\mid W_{N}) (z)} = \Phi\left(\frac{-1}{Nz} , f \right) = \Phi(z, f_{N}) = -4 \log\abs{\Psi(z,f_{N})},
  \]
  where $f_{N} = f^{\sigma_{N}}$.

  We have that $c_{f}(m,\mu) = c_f(m,-\mu)$ by the action of the center of $\Mp_2(\Z)$.
  Therefore, $f_{N} = f$ and ${\Psi\mid{W_{N}}(z)} = \pm \Psi(z)$ because $W_N^2=1$.
  This implies that the Fourier expansion of $\Psi(z)$ at the cusp $0$ has also
  coprime integral Fourier coefficients and the result follows by \cref{prop:fmultFourier}.
\end{proof}

Now let $z_{D,\rho}$ be the Heegner point of level $N$ given by
\[
  z_{D,\rho} = \frac{\rho+\sqrt{D}}{2N} \in \uhp,
\]
where we assume as usual that $D<0$ is an odd fundamental discriminant.
We conclude that if $z_{D,\rho}$ is not contained in the divisor of $\Psi_L(z,f)$, then
the prime valuations of $\Psi_{L}(z_{D,\rho},f)$ can be obtained using \cref{thm:divf2}.

\section{Some numerical examples}
\label{sec:examples}
We give some examples to illustrate and test our formula numerically.

The Borcherds products discussed in \cref{sec:borcherds-products}
provide an easy way to generate examples if the space of obstructions $S_{3/2,L^-}$ is known. 
Here, $S_{3/2,L^-}$ is the space of cusp forms transforming with the Weil representation
$\rho_{L^-}$ of the lattice given by $L$ with the negative $-Q$ of the associated quadratic form.
According to \cite{bef-simple}, if $N=p$ is prime, then $S_{3/2,L^-} = \{0\}$ if and only if the modular curve 
$X_0^+(p) = \Gamma_0^+(p) \bs \uhp$ has genus zero. Here, $\Gamma_0^+(p)$ is the extension
of $\Gamma_0(p)$ by the Fricke involution.

As an explicit and non-trivial example of this kind let us take $N=47$.
Since $S_{3/2,L^-}$ vanishes, there exists a weakly holomorphic modular form $f$
of weight $1/2$ with representation $\rho_L$ and principal part 
\[
  \frac{1}{2}q^{-11/188}(\phi_{41} + \phi_{-41}).
\]
We can assume that the constant term of $f$ vanishes.
Its Borcherds lift can be identified as the Hauptmodul for $X_0^+(47)$ which has a Fourier development starting with
\[
\Psi(z,f) = q^{-1} +  1 + q + 2q^{2} + 3q^{3} + 3q^{4} + 5q^{5} + 5q^{6} + 8q^{7} + 9q^{8} + 12q^{9} + 14q^{10} +  O(q^{11}),
\]
where $q=e(z)$.
It can be explicitly constructed as the quotient
\[
\Psi(z,f) = \frac{\theta_1(z)-\theta_2(z)}{2\eta(z)\eta(47z)} +1,
\]
where $\theta_i(\tau)$ are the two theta functions corresponding to 
the binary quadratic forms $Q_1 = [1,1,12]$ and $Q_2 = [2,-1,6]$ of discriminant $-47$.
Its divisor is equal to $\frac{1}{2}(Z(11,41)+Z(11,-41))$ which equals $Z(11,41)$ on $X_0^+(47)$.
Since it is the generator of the function field of $X_0^+(47)$, its values at CM points of
fundamental discriminant generate the corresponding Hilbert class field.
As it turns out, the class polynomials for $\Psi(z,f)$ have some advantages over
the usual Hilbert class polynomials for the elliptic modular function $j(\tau)$.
In particular, their discriminants and resultants are usually much smaller.

As a first example, take the CM points of discriminant $-23$. \cref{thm:divf2} easily reveals
that $\Psi(z,f)$ is a unit in the Hilbert class field for any such point (note that it cannot be equal to $0$). 
In fact, it is easily confirmed numerically that $\Psi(z_{-23,27},f)$ is a root of the polynomial
\[
x^{3} - x^{2} + 2x - 1
\]
with discriminant $-23$.
Using \sage{}, we confirmed that these roots are indeed contained in the Hilbert class field of $\Q(\sqrt{-23})$
and any of these generates the Hilbert class field over $\Q(\sqrt{-23})$ (the polynomial is irreducible and since the constant coefficient is equal to $1$, the values are indeed units).
To compare with the $j$-invariant, note that the Hilbert class polynomial for $j$ is equal to
$x^{3} + 3491750 \, x^{2} - 5151296875 \, x + 12771880859375$ and has discriminant
$-1 \cdot 5^{18} \cdot 7^{12} \cdot 11^{4} \cdot 17^{2} \cdot 19^{2} \cdot 23$.

To see a non-unit, we can take for instance $D = -107$.
In this case, we evaluate at the point $z_{-107,9}$.
Numerically, we obtain $\Psi(z_{-107,9},f) \approx 2.796321\ldots$.
This turns out to be the unique real root of $x^{3} - 3x^{2} + 2x - 4$. 
We consult \cref{thm:divf2} together with \cref{prop:Zprime} to obtain
that all prime ideals that contribute to its prime ideal factorization lie above $2$.
Indeed, the only non-zero contribution in the Theorem comes from $n= \pm 7$
and $\Diff(6/107) = \{2\}$.

We fix the embedding of the Hilbert class field $H$ of $k=\Q(\sqrt{-107})$ into $\C$,
such that $H=k(j)$ and $j$ is mapped to $j(\calO_k) \approx -1.297832\ldots \in \R$.
Let $\frakP_0, \frakP_1$ and $\frakP_2$ be the prime ideals of $H$ above $2$
where $\frakP_0$ is fixed by complex conjugation and $\overline{\frakP_1} = \frakP_2$.
Since the two primes of $\Q(\sqrt{-107})$ above $3$ are both not principal, the valuation at $\frakP_0$ is equal to zero by
\cref{prop:Zprime}.
The valuation at the other two primes turn both out to be equal to $2\cdot(1/4+1/4) = 1$, which we can also confirm using \sage{}.

\printbibliography

\end{document}